\def\inte#1{
\displaystyle\mathop{#1\kern0pt}^\circ }
\let\pa=\partial
\let\al=\alpha
\let\d=\delta
\let\e=\varepsilon
\let\lam=\lambda
\let\r=\rho
\let\f=\frac
\let\p=\psi
\let\D=\Delta
\let\wt=\widetilde
\let\wh=\widehat
\def\cA{{\mathcal A}}
\def\cB{{\mathcal B}}
\def\cC{{\mathcal C}}
\def\cE{{\mathcal E}}
\def\cF{{\mathcal F}}
\def\cR{{\mathcal R}}
\def\cS{{\mathcal S}}
\def\pa{\partial}
\def\dH{\dot{H}}
\def\dB{\dot{B}}
\def\virgp{\raise 2pt\hbox{,}}
\def\cdotpv{\raise 2pt\hbox{;}}
\def\eqdefa{\buildrel\hbox{\footnotesize def}\over =}
\def\C{\mathop{\mathbb C\kern 0pt}\nolimits}
\def\DD{\mathop{\mathbb D\kern 0pt}\nolimits}
\def\EE{\mathop{{\mathbb E \kern 0pt}}\nolimits}
\def\K{\mathop{\mathbb K\kern 0pt}\nolimits}
\def\N{\mathop{\mathbb N\kern 0pt}\nolimits}
\def\Q{\mathop{\mathbb Q\kern 0pt}\nolimits}
\def\R{\mathop{\mathbb R\kern 0pt}\nolimits}
\def\SS{\mathop{\mathbb S\kern 0pt}\nolimits}
\def\ZZ{\mathop{\mathbb Z\kern 0pt}\nolimits}
\def\TT{\mathop{\mathbb T\kern 0pt}\nolimits}
\def\P{\mathop{\mathbb P\kern 0pt}\nolimits}
\newcommand{\la}{\lambda}
\newcommand{\Z}{{\ZZ}}
\def\dv{\mbox{div}}
\def\dive{\mathop{\rm div}\nolimits}
\def\Supp{\mathop{\rm Supp}\nolimits\ }
\def\no{\noindent}
\def\na{\nabla}
\def\p{\partial}
\newcommand{\w}[1]{\langle {#1} \rangle}
\newcommand{\beq}{\begin{equation}}
\newcommand{\eeq}{\end{equation}}
\newcommand{\ben}{\begin{eqnarray}}
\newcommand{\een}{\end{eqnarray}}
\newcommand{\beno}{\begin{eqnarray*}}
\newcommand{\eeno}{\end{eqnarray*}}
\newcommand{\andf}{\quad\hbox{and}\quad}
\newtheorem{defi}{Definition}[section]
\newtheorem{thm}{Theorem}[section]
\newtheorem{lem}{Lemma}[section]
\newtheorem{rmk}{Remark}[section]
\newtheorem{col}{Corollary}[section]
\newtheorem{prop}{Proposition}[section]
\newcommand{\vv}[1]{\boldsymbol{#1}}
\begin{document}

\title[Global small solutions to 3-D incompressible MHD type system]
{ Global small solutions  to three-dimensional incompressible MHD
system }
\author[L. Xu]{Li Xu}
\address[L. Xu]%
{LSEC, Institute of Computational Mathematics, Academy of Mathematics and Systems Science, CAS\\
Beijing 100190, CHINA}
\email{ xuliice@lsec.cc.ac.cn}
\author[P. Zhang]{Ping Zhang}%
\address[P. Zhang]
 {Academy of Mathematics and Systems Science and  Hua Loo-Keng Key Laboratory of Mathematics,
  Chinese Academy of Sciences, Beijing 100190, CHINA} \email{zp@amss.ac.cn}
\date{9/May/2013}
\maketitle
\begin{abstract}  In this paper, we consider the global wellposedness of
 3-D incompressible magneto-hydrodynamical  system with small and smooth initial
data.  The main difficulty of the proof lies in establishing the
global in time $L^1$  estimate for gradient of the velocity field
due to the strong degeneracy  and anisotropic spectral properties of
the linearized system. To achieve this and to avoid the difficulty
of propagating anisotropic regularity for the transport equation, we
first write our system \eqref{B1} in the Lagrangian formulation
\eqref{B11}. Then we employ anisotropic Littlewood-Paley analysis to
establish the key $L^1$ in time estimates to the  velocity and the
gradient of the pressure in the Lagrangian coordinate. With those
estimates, we prove the global wellposedness of \eqref{B11} with
smooth and small initial data by using the energy method. Toward
this,  we will have to use the algebraic structure of \eqref{B11} in
a rather crucial way. The global wellposedness of the original
system \eqref{B1} then follows by a suitable change of variables
together with a continuous argument. We should point out that
compared with the linearized systems of 2-D MHD equations in
\cite{XLZMHD1} and that of the 3-D modified MHD equations in
\cite{LZ}, our linearized system \eqref{B19} here is much more
degenerate, moreover, the formulation of the initial data for
\eqref{B11} is more subtle than that in \cite{XLZMHD1}.
\end{abstract}

\noindent {\sl Keywords:} Inviscid MHD system, Anisotropic
Littlewood-Paley theory, Dissipative

 \qquad\qquad
estimates, Lagrangian coordinates\

\vskip 0.2cm

\noindent {\sl AMS Subject Classification (2000):} 35Q30, 76D03  \

\setcounter{equation}{0}
\section{Introduction}
In this paper, we investigate the  global wellposedness of the
following three-dimensional incompressible magnetic hydrodynamical
 system (or MHD in short) with initial data being sufficiently close to the equilibrium state:
\begin{equation}\label{B1}
 \left\{\begin{array}{l}
\displaystyle \pa_t\vv b+\vv u\cdot\na\vv b=\vv b\cdot\na\vv u,\qquad (t,x)\in\R^+\times\R^3, \\
\displaystyle \pa_t\vv u +\vv u\cdot\na\vv u -\D\vv u+\na p=-\f12\na|\vv b|^2+\vv b\cdot\na\vv b, \\
\displaystyle \dv\,\vv u =\dv\,\vv b=0, \\
\displaystyle \vv b|_{t=0}=\vv b_0,\quad \vv u|_{t=0}=\vv u_0,
\end{array}\right.
\end{equation}
where $\vv b=(b^1,b^2,b^3)^T$ denotes the magnetic field, and  $\vv
u=(u^1,u^2,u^3)^T, p$ the velocity and scalar pressure of the fluid
respectively.  This MHD system \eqref{B1} with zero diffusivity in
the equation for the magnetic field can be applied to model plasmas
when the plasmas are strongly collisional, or the resistivity due to
these collisions are extremely small. One may check the references
\cite{CP, LL, Ca} for more detailed explanations to this system.

\medbreak

It has been a long-standing open problem that  whether or not
classical solutions of \eqref{B1} can develop finite time
singularities even in the two-dimensional case.  In the case when
there is full magnetic diffusion in \eqref{B1}, Duvaut and Lions
\cite{DL} established the local existence and uniqueness of solution
in the classical Sobolev space $H^s(\R^d)$, $s\geq d,$ they also
proved the global existence of solutions to this system with small
initial data; Sermange and Temam \cite{ST} proved the global unique
solution in the two space dimensions;
 Abidi and Paicu \cite{AP} proved similar result as in \cite{DL} for the so-called inhomogeneous MHD system
  with  initial data in the critical spaces.
With mixed partial dissipation and additional magnetic diffusion in
the two-dimensional MHD system, Cao and Wu \cite{CW} (see also
\cite{CRW}) proved that such a system is globally wellposed for any
data in $H^2(\R^2).$ Lin and the second author \cite{LZ} proved the
global wellposedness to a modified three-dimensional MHD system (3-D
version of \eqref{1.1} below) with initial data sufficiently close
to the equilibrium state. Lin and the authors \cite{XLZMHD1}
established the global existence of small solutions to the
two-dimensional MHD equations \eqref{B1}.

\medbreak

For the incompressible MHD equations \eqref{B1}, whether there is a
dissipation or not for the magnetic field is a very important
problem also from physics of plasmas. The heating of high
temperature plasmas by MHD waves is one of the most interesting and
challenging problems of plasma physics especially when the energy is
injected into the system at the length scales much larger than the
dissipative ones. It has been conjectured that in the
three-dimensional MHD system, energy is dissipated at a rate that is
independent of the ohmic resistivity \cite{ChCa}. In other words,
the viscosity (diffusivity) for the magnetic field equation can be
zero yet the whole system may still be dissipative. We shall justify
this conjecture for \eqref{B1} with initial data close enough to the
equilibrium state.

\medbreak

Notice that in two space dimensions,  $\dv\,\vv b=0$ implies the
existence of a scalar function $\phi$ so that $\vv
b=(\partial_2\phi, -\partial_1\phi)^T,$ and the  system \eqref{B1}
becomes
\begin{equation}\label{1.1}
 \left\{\begin{array}{l}
\displaystyle \pa_t \phi+\vv u\cdot\na\phi=0,\qquad (t,x)\in\R^+\times\R^2, \\
\displaystyle \pa_t \vv u +\vv u\cdot\na\vv u -\D\vv u+\na p=-\f12\na|\na\phi|^2-\dv\bigl[\na\phi\otimes\na\phi\bigr], \\
\displaystyle \dv\,\vv u = 0, \\
\displaystyle \phi|_{t=0}=\phi_0(x)=x_2+\psi_0(x),\quad  \vv
u|_{t=0}=\vv u_0,
\end{array}\right.
\end{equation}
The main idea in \cite{XLZMHD1} is first to seek another scalar
function $\wt{\phi}(x)=-x_1+\wt{\psi}_0$ so that \beq\label{a2}
\det\,U_0=1\quad \mbox{for}\quad
U_0=\begin{pmatrix} 1+\p_{x_2}\psi_0& \p_{x_2}\tilde\psi_0 \\
-\p_{x_1}\psi_0&1-\p_{x_1}\tilde\psi_0
\end{pmatrix}, \eeq
provided that $\psi_0$ is sufficiently small in some sense.  Then
the authors of  \cite{XLZMHD1} looked for a volume preserving
diffeomorphism in $\R^2,$ $X_0(y)=y+Y_0(y),$ so that
\beq\label{app0} U_0\circ X_0(y)=\na_y X_0(y)=I+\na_y Y_0(y). \eeq
Let $(Y(t,y), q(t,y))$ be determined by \beq\label{a6} \begin{split}
&X(t,y)=X_0(y)+\int_0^t\vv u(s,X(s,y))\,ds\eqdefa y+Y(t,y), \\
&q(t,y)\eqdefa(p+|\na\phi|^2)\circ X(t,y). \end{split} \eeq
\eqref{1.1} can be equivalently reformulated as
\beq\label{a14}\left\{\begin{aligned}
&Y_{tt}-\na_Y\cdot\na_Y Y_t-\p_{y_1}^2Y+\na_Yq=\vv 0, \qquad (t,y)\in\R^+\times\R^2,\\
&\na_Y\cdot Y_t=0,\\
&Y|_{t=0}=Y_0,\quad Y_t|_{t=0}=\vv u_0\circ X_0(y)\eqdefa Y_1,
\end{aligned}\right.\eeq
where $\na_Y\eqdefa\mathcal{A}_Y^T\na_y$ and \beq\label{a14a}
\mathcal{A}_Y\eqdefa\begin{pmatrix}
1+\p_{y_2}Y^2&-\p_{y_2}Y^1\\
-\p_{y_1}Y^2&1+\p_{y_1}Y^1
\end{pmatrix}.
\eeq In particular, the linearized system of \eqref{a14} reads
\beq\label{a12}\left\{\begin{aligned}
&Y_{tt}-\Delta_y Y_t-\p_{y_1}^2 Y=\vv f(Y,q),\qquad (t,y)\in\R^+\times\R^2,\\
&\na_y\cdot Y=\r(Y),\\
&Y|_{t=0}=Y_0,\quad Y_t|_{t=0}=Y_1.
\end{aligned}\right.\eeq
By using anisotropic Littlewood-Paley theory, the authors
\cite{XLZMHD1} first established the global wellposedness of
\eqref{a14} with small and smooth initial data, then they proved the
global wellposedness of \eqref{1.1} with sufficiently small data
$(\psi_0,\vv u_0)$ through a suitable changes of variables.

\medbreak

However, in the three-dimensional case, we can not find such an
equivalent formulation of \eqref{B1} as \eqref{1.1}. Instead, for
$\vv b_0-\vv e_3$ being sufficient small, we can find a
$\vv\Psi=(\psi_1,\psi_2,\psi_3)^T$ so that there holds \eqref{B3p1}.
Compared with \eqref{a2}, \eqref{B3p1} is a nonlinear system. With
this $\vv \Psi,$ we can define $ \bar{\vv b}_0$ and $\tilde{\vv
b}_0$ via \eqref{B3p2} so that the $3\times 3$ matrix $U_0\eqdefa
(\bar{\vv b}_0,\tilde{\vv b}_0,\vv b_0)$  satisfies \beq\label{B3}
\dv\bar{\vv b}=\dv\tilde{\vv b}=0,\quad\mbox{and}\quad \det\,U_0=1.
\eeq With thus obtained $U_0,$ we can find a 3-D volume preserving
diffeomorphism $X_0(y)=y+Y_0(y),$ and reformulate \eqref{B1} in the
Lagrangian coordinate \eqref{B11} with its linearized system
\eqref{B12}. We point out that one crucial idea in \cite{XLZMHD1} is
to use $\p_{y_1}Y^1+\p_{y_2}Y^2=\r(Y)$ to propagate the time
dissipative estimate of $\p_{y_1}Y^1$ to that of $\p_{y_2}Y^2.$
Notice that in the linearized system \eqref{B12}, one  only has time
dissipative estimate for $\p_{y_3}Y,$ and we can not use $\na_y\cdot
Y=\r(Y)$ to propagate the time dissipative estimate from
$\p_{y_3}Y^3$ to that of  $\p_{y_1}Y^1, \p_{y_2}Y^2$, which gives
rise to another difficulty in the analysis of three-dimensional MHD
system. And we will have to use the nonlinear structure of
\eqref{B11} in a rather crucial way so that the source term in
\eqref{B12} is still globally integrable in time. As in
\cite{XLZMHD1}, we shall first establish the global wellposedness of
\eqref{B11} with small and smooth initial data,  we then prove the
global existence of small solution to \eqref{B1} by a suitable
changes of variables along with a continuous argument.

\medbreak

We should remark that the system \eqref{1.1} is of interest not only
because it models the incompressible MHD equations, but also because
it arises in many other important applications. Moreover, its
nonlinear coupling structure is universal, see the recent survey
article \cite{Lin}. Indeed, the system \eqref{1.1}  resembles the
2-D viscoelastic fluid system:
\begin{eqnarray}
\left\{ \begin{array}{l} U_t+\vv u\cdot\nabla U=\nabla\vv u U,
\\
\vv u_t+\vv u\cdot\nabla\vv u+\nabla p=\D\vv u+\nabla\cdot(UU^T),\\
\dv\,\vv u=0,\\
U|_{t=0}=U_0,\quad\quad\vv u|_{t=0}=\vv u_0,
\end{array}\right.\label{eq1}
\end{eqnarray}
where $U$ denotes the deformation tensor, $\vv u$ is the fluid
velocity and $p$ represents the hydrodynamic pressure (we refer to
\cite{LLZ} and the references therein for more details).

In two space dimensions, when $\na \cdot U_0=0,$ it follows from
(\ref{eq1})  that  $\na\cdot U(t,x)=0$ for all $t>0.$ Therefore, one
can find a vector $\vv\phi=(\phi_1,\phi_2)^T$ such that
\begin{equation*}
U = \left( \begin{array}{rc} -\partial_2\phi_1 & -\partial_2\phi_2\\
\partial_1\phi_1 &  \partial_1\phi_2
\end{array}  \right).
\end{equation*}
Then (\ref{eq1}) can be equivalently reformulated as
\begin{eqnarray}\label{phi}
\left\{ \begin{array}{l} \vv\phi_t +\vv u\cdot \nabla\vv\phi =\vv0, \\
\vv u_t + \vv u\cdot\nabla\vv u + \nabla p = \Delta\vv u
-\sum_{i=1}^2\dive\big[\na\phi_i\otimes\na\phi_i\bigr],\\
 \dv\,\vv u
= 0,\\
\vv\phi|_{t=0}=\vv\phi_0,\quad\quad\vv u|_{t=0}=\vv u_0.
\end{array}\right.
\end{eqnarray}
The authors (\cite{LLZ}) established the global existence of smooth
solutions to the Cauchy problem in the entire space or on a periodic
domain for (\ref{phi}) in general space dimensions provided that the
initial data is sufficiently close to the equilibrium state (one may
check \cite{Ch-Zh, LLZhen} for the 3-D result). One sees the only
difference between \eqref{1.1} and \eqref{phi} lying in the fact
that $\phi$ is a scalar function in \eqref{1.1}, while
$\vv\phi=(\phi_1,\phi_2)^T$ is a vector-valued function with the
unit Jacobian in \eqref{phi}. However, it gives rise to an essential
difficulty in the analysis. In fact, there is a damping mechanism of
the system \eqref{phi} that can be seen  from the linearization of
the system $\p_t$ \eqref{phi}:
\begin{eqnarray}\label{dphi}
\left\{ \begin{array}{l} \vv\phi_{tt}-\D\vv\phi-\D\vv\phi_t+\na q  =\vv f, \\
\vv u_{tt}-\D\vv u -\D\vv u_t+\na p=\vv F,\\
\, \dv\,\vv u = 0.
\end{array}\right.
\end{eqnarray}
We also remark that the linearized system of \eqref{1.1} in 3-D
reads \beq \label{intro1}
\p_t^2\psi-(\p_{x_1}^2+\p_{x_2}^2)\psi-\D\p_t\psi=f. \eeq One may
check Remark 1.4 of \cite{LZ} for details. It is easy to observe
that our linearized system in \eqref{B12} is much more degenerate
than \eqref{dphi} and \eqref{intro1}.

\medbreak

As in \cite{XLZMHD1}, to describe  the initial data $\vv b_0$ in
\eqref{B1}, we need the following definition:

\begin{defi}\label{def1.1ad}
Let $\vv b_0=(b^1_0,b^2_0, b^3_0)^T$ be a smooth enough vector
field. We define its trajectory $X(t,x)$ by \beq\label{apa6}
\left\{\begin{array}{l}
\displaystyle \f{d X(t,x)}{dt}=\vv b_0(X(t,x)), \\
\displaystyle X(t,x)|_{t=0}=x.
\end{array}\right. \eeq
We call that $f$ and $\vv b_0$ are admissible on a domain $D$ of
$\R^3$ if there holds \beno \int_{\R}f(X(t,x))\,dt=0\quad\mbox{for
\, all }\quad x\in D. \eeno
\end{defi}

\begin{rmk}
As in \cite{XLZMHD1}, the condition that $f$ and $\vv b$ are
admissible on some set of $\R^3$ is to guarantee that
\beq\label{apa4}
b_0^1\p_{x_1}\psi+b_0^2\p_{x_2}\psi+b_0^3\p_{x_3}\psi=f \eeq has a
solution $\psi$ so that $\lim_{|x|\to \infty}\psi(x)=0.$ Let us take
$\vv b=(0,0,1)^T$ for example. In this case, \eqref{apa4} becomes
$\p_{x_3}\psi=f,$ which together with the condition
$\lim_{|x_3|\to\infty}\psi(x)=0$ ensures that \beno
\psi(x_h,x_3)=-\int_{x_3}^\infty
f(x_h,t)\,dt=\int_{-\infty}^{x_3}f(x_h,t)\,dt. \eeno We thus obtain
that $\int_{\R}f(x_h,t)\,dt=0,$ that is, $f$ and $(0,0,1)^T$ are
admissible on $\R^2\times\{0\}.$
\end{rmk}

\no{\bf Notations:} Let $X_1, X_2$ be Banach spaces, the norms
$\|\cdot\|_{X_1\cap X_2}\eqdefa\|\cdot\|_{X_1}+\|\cdot\|_{X_2}$ and
$\|\cdot\|_{L^p(\R^+;X_1\cap
X_2)}\eqdefa\|\cdot\|_{L^p(\R^+;X_1)}+\|\cdot\|_{L^p(\R^+;X_2)}$ for
$p\in[1,\infty]$.

\medbreak

We now state the main result of this paper:

\begin{thm}\label{th2}
{\sl Let $s_1>\f54,$ $s_2\in (-\f12,-\f14),$ and $p\in (\f32,2)$.
Let $s\geq s_1+2,$  let $(\vv b_0,\vv u_0)$ satisfy $\vv b_0-\vv
e_3\in B^{s_1+\f3p+\f12}_{p,1}\cap H^s(\R^3)$ for $\vv
e_3=(0,0,1)^T$, and $\vv u_0\in
\dH^{s_2}\cap\dB^{\f3p-1}_{p,1}(\R^3)$ with $\na\vv u_0\in
B^{s_1+\f3p-\f32}_{p,1}\cap H^{s-1}(\R^3)$ and
 \beq\label{A1} \|\vv b_0-\vv e_3\|_{B^{s_1+\f3p+\f12}_{p,1}}+\|\vv
u_0\|_{\dH^{s_2}\cap\dB^{\f3p-1}_{p,1}}+\|\na\vv
u_0\|_{B^{s_1+\f3p-\f32}_{p,1}}\leq c_0\eeq for some $c_0$
sufficiently small. We assume moreover that $\vv b_0-\vv e_3$ and
$\vv b_0$ are admissible on $\R^2\times\{0\}$ in the sense of
Definition \ref{def1.1ad} and $\Supp\bigl(\vv b_0-\vv
e_3\bigr)(x_1,x_2,\cdot)\subset [-K, K]$ for some positive constant
$K.$ Then
  \eqref{B1} has a unique global solution
$(\vv b, \vv u, p)$ (up to a constant for $p$) so that
\beq\label{th2wq}
\begin{split}
& \vv b-\vv e_3\in C([0,\infty); H^{s}(\R^3))\cap L^2(\R^+;\dH^{s_1+1}\cap\dH^{s_2+1}(\R^3)),\\
& \na p\in C([0,\infty); H^{s-1}(\R^3))\cap
L^2(\R^+;\dH^{s_1}\cap \dH^{s_2}(\R^3)),\\
&\vv u\in  C([0,\infty); H^{s}(\R^3)) \cap
L^1(\R^+;\dH^{s_1+2}\cap\dB^{\f52}_{2,1}(\R^3))\cap
L^2_{\mbox{loc}}(\R^+;\dot{H}^{s+1}(\R^3)).
\end{split}
\eeq
 Furthermore, there holds
 \beq\label{th2wr}
\begin{split}
&\|\vv b-\vv e_3\|_{L^\infty(\R^+;\dH^{s_1+1}\cap\dH^{s_2})} +\|\vv
u\|_{L^\infty(\R^+; \dH^{s_1+1}\cap\dH^{s_2})}
\\
&\quad+\|\vv b-\vv e_3\|_{L^2(\R^+;\dH^{s_1+1}\cap\dH^{s_2+1})}+\|
\vv
u\|_{L^2(\R^+;\dH^{s_1+2}\cap\dH^{s_2+1})}\\
&\quad+\|\vv u\|_{L^1(\R^+; \dH^{s_1+2}\cap\dB^{\f52}_{2,1})}+\|\na p\|_{L^2(\R^+;\dH^{s_1}\cap\dH^{s_2})}\\
&\leq C\bigl(\|\vv b_0-\vv e_3\|_{B^{s_1+\f3p+\f12}_{p,1}}+\|\vv
u_0\|_{\dH^{s_2}\cap\dB^{\f3p-1}_{p,1}}+\|\na\vv
u_0\|_{B^{s_1+\f3p-\f32}_{p,1}}\bigr).
\end{split}
\eeq }
\end{thm}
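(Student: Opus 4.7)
Following the blueprint already sketched in the introduction, I would recast \eqref{B1} in Lagrangian coordinates adapted to the equilibrium $\vv e_3$ and close the global estimates there, finally transferring the result back to Eulerian variables. The first step is the preparation of the initial data: given $\vv b_0-\vv e_3$ small and admissible on $\R^2\times\{0\}$, solve the nonlinear algebraic system \eqref{B3p1} for a potential $\vv\Psi=(\psi_1,\psi_2,\psi_3)^T$ by a fixed-point argument (the admissibility hypothesis together with the support condition $\Supp(\vv b_0-\vv e_3)(x_1,x_2,\cdot)\subset[-K,K]$ is exactly what is needed so that the transport-type equation along $\vv b_0$ admits a decaying solution, as explained in the remark after Definition \ref{def1.1ad}). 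From $\vv\Psi$ define $\bar{\vv b}_0,\tilde{\vv b}_0$ via \eqref{B3p2} so that $U_0=(\bar{\vv b}_0,\tilde{\vv b}_0,\vv b_0)$ is divergence-free column-wise and $\det U_0=1$ as in \eqref{B3}. Then construct a volume-preserving diffeomorphism $X_0(y)=y+Y_0(y)$ satisfying $\nabla X_0=U_0\circ X_0$; solvability follows because $\det U_0=1$ and $U_0$ is close to the identity, and anisotropic Besov estimates on $\vv\Psi$ propagate to the required Besov/Sobolev control of $Y_0$.

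\textbf{The Lagrangian problem.} Next I would pass to the Lagrangian formulation \eqref{B11} with unknown $(Y,q)$ where $Y_t\circ X^{-1}=\vv u$ and $q\circ X^{-1}=p+\tfrac12|\vv b|^2$, and study its linearization \eqref{B12}. The key analytical step is to establish global-in-time $L^1$ dissipative estimates for $\nabla Y_t$ and $\nabla q$. Because the dissipation in \eqref{B12} only acts in the $y_3$ direction (the linearized operator is of type $\partial_t^2-\Delta_y\partial_t-\partial_{y_3}^2$, strictly more degenerate than \eqref{dphi} or \eqref{intro1}), I would use anisotropic Littlewood-Paley decomposition separating horizontal frequencies $(\xi_1,\xi_2)$ from vertical $\xi_3$, and carry out frequency-localized energy estimates on the resolvent symbol, tracking the resulting anisotropic Besov norms $\mathcal B^{s_1,s_2}_{p,q}$ of the initial data (this is why the hypotheses in \eqref{A1} mix a non-homogeneous Besov norm with homogeneous norms at two different regularity levels $s_1,s_2$). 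The dissipation $\partial_{y_3}^2$ yields time integrability only for vertical derivatives of $Y$, so, unlike in \cite{XLZMHD1}, the divergence constraint $\nabla_y\cdot Y=\rho(Y)$ cannot be used to transfer dissipation to horizontal components; to compensate, I would retain the nonlinear structure of \eqref{B11} and exploit the fact that the source term $\vv f$ arising from the quasilinear terms in \eqref{B11} carries the missing horizontal derivative information in a form that, after careful paraproduct decomposition, is still integrable in time.

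\textbf{Closing the nonlinear estimates.} With the linear $L^1_t$ estimates in hand, I would set up a standard Picard/bootstrap argument for the full nonlinear system \eqref{B11}. Define the functional $E(t)$ collecting all norms on the left of \eqref{th2wr} pulled back to Lagrangian variables, and prove the quadratic closure inequality $E(t)\le C(\text{data})+CE(t)^2$. The linear dissipative bound controls the $L^1_t(\dH^{s_1+2}\cap \dB^{5/2}_{2,1})$ norm of $\vv u$, which is precisely the norm needed to keep $\nabla Y$ small in $L^\infty_t L^\infty_y$ so that the Lagrangian flow $X(t,\cdot)$ remains a diffeomorphism with bounded Jacobian. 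Energy estimates at the higher Sobolev level $H^s$ are then standard once the low-frequency part is under control, using commutator estimates and the divergence-free structure; the algebraic constraint $\det\nabla X=1$ is used repeatedly to rewrite pressure and cross terms in ways that preserve the good structure.

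\textbf{Return to Eulerian variables and continuity argument.} Finally, once global smallness of $Y$ in Lagrangian coordinates is established, the diffeomorphism $X(t,\cdot)$ is defined globally and uniformly close to identity. Setting $\vv u(t,x)=Y_t(t,X^{-1}(t,x))$, $\vv b(t,x)$ from $\nabla X\circ X^{-1}$ (third column), and $p=q\circ X^{-1}-\tfrac12|\vv b|^2$, one recovers a global solution of \eqref{B1} with the regularity \eqref{th2wq} and the estimates \eqref{th2wr}. A continuity argument in $t$ closes uniqueness and shows that the local solution cannot break down. The main obstacle I expect is the second step: proving the anisotropic $L^1_t$ estimates on $\nabla Y_t$ and $\nabla q$ for the degenerate linearization \eqref{B12} with a source term built from the nonlinearity in \eqref{B11}, because the absence of horizontal dissipation makes it impossible to handle the nonlinear source by a naive perturbative estimate — the precise algebraic cancellations encoded in the Lagrangian formulation of the MHD system must be identified and used to absorb the horizontal derivatives appearing in the forcing.
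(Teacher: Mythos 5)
Your proposal is correct and follows essentially the same route as the paper: data preparation via the admissibility/support hypotheses (Proposition \ref{LL1}, Lemma \ref{LL2}), the Lagrangian reformulation \eqref{B11}--\eqref{B13} with anisotropic Littlewood--Paley $L^1$-in-time dissipative estimates for the degenerate linearization, a quadratic bootstrap closure, and the return to Eulerian variables. The only part you leave implicit is how the full $H^s$ regularity in \eqref{th2wq} is recovered; the paper does this exactly as you suggest, via a separate local $H^s$ wellposedness result with the blow-up criterion $\int_0^{T^*}(\|\na\vv u\|_{L^\infty}+\|\vv b\|_{L^\infty}^2)\,dt=\infty$ (Proposition \ref{p9}), which the global small-norm estimate \eqref{th2wr} rules out.
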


\begin{rmk} (1)
One may find the definitions of Besov spaces in Subsection
\ref{subsect3.2}. We remark that those technical assumptions on $\vv
b_0$ and $\vv u_0$ will be used to deal with the low frequency part
of $\vv b$ and $\vv u.$ For simplicity, we do not provide result on
the propagation of regularities for $\vv b_0-\vv e_3\in
B^{s_1+\f3p+\f12}_{p,1}(\R^3)$ and $\na\vv u_0\in
B^{s_1+\f3p-\f32}_{p,1}(\R^3).$

(2) Here we  point out that the estimate of $ \|\vv b-\vv
e_3\|_{L^2(\R^+;\dH^{s_1+1}\cap\dH^{s_2+1})}$ in \eqref{th2wr} is
not standard for the solutions of the transport equation in
\eqref{B1}. It is purely due to the coupling structure in
\eqref{B1}. And this estimate in some sense explains that the
magnetic field is indeed time dissipative even without resistivity
for the magnetic field. We shall go back to this point in our future
work.

(3) We can improve the condition that: $\Supp\bigl(\vv b_0-\vv
e_3\bigr)(x_1,x_2,\cdot)\subset [-K,K]$ for some positive number
$K$, in Theorem \ref{th2} by assuming appropriate decay of $\vv
b_0-\vv e_3$ with respect to $x_3$ variable. For a clear
presentation, we prefer not to present this technical part here.
\end{rmk}


Let us complete this section by the notation we shall use in this context.\\

\no{\bf Notation.} For any $s\in\R$, we denote by $H^s(\R^3)$ the
classical  $L^2$ based Sobolev spaces with the norm
$\|\cdot\|_{H^s},$ while $\dot{H}^s(\R^3)$ the classical homogenous
Sobolev spaces with the norm $\|\cdot\|_{\dot{H}^s}$. Let $A, B$ be
two operators, we denote $[A;B]=AB-BA,$ the commutator between $A$
and $B$. For $a\lesssim b$, we mean that there is a uniform constant
$C,$ which may be different on different lines, such that  $a\leq
Cb,$ and $a\sim b$ means that both $a\lesssim b$ and $b\lesssim a$.
We shall denote by $(a|b)$  the $L^2(\R^3)$ inner product of $a$ and
$b.$ $(d_{j,k})_{j,k\in\Z}$ (resp. $(c_j)_{j\in\Z}$) will be a
generic element of $\ell^1(\Z^2)$ (resp. $\ell^2(\Z))$ so that
$\sum_{j,k\in\Z}d_{j,k}=1$ (resp. $\sum_{j\in\Z}c_j^2=1).$ Finally,
we denote by $L^p_T(L^q_h(L^r_v))$ the space $L^p([0,T];
L^q(\R_{x_h}^2;L^r(\R_{x_3})))$ with $x_h=(x_1,x_2)$.

\setcounter{equation}{0}
\section{Lagrangain formulation of  \eqref{B1}}\label{sect2}

Motivated by \cite{XLZMHD1}, we are going to construct two vector
fields $\bar{\vv b}_0=(\bar{b}_0^1,\bar{b}_0^2,\bar{b}_0^3)^T$ and $
\tilde{\vv b}_0=(\tilde{b}_0^1,\tilde{b}_0^2,\tilde{b}_0^3)^T$ so
that the $3\times 3$ matrix $U_0\eqdefa (\bar{\vv b}_0,\tilde{\vv
b}_0,\vv b_0)$  satisfies \eqref{B3}.

\begin{prop}\label{LL1}
{\sl Let $s>2+\f3p$ and $p\in (\f32,2).$ Let $\vv b_0-\vv
e_3=(b_0^1,b_0^2,b_0^3-1)^T\in B^s_{p,1}(\R^3)$ with
\beq\label{LL1pq} \dv\,\vv b_0=0\quad\mbox{and}\quad \|(b_0^1,b_0^2,
b_0^3-1)\|_{B^s_{p,1}}\leq \e_0.\eeq  We assume moreover that $\vv
b_0-\vv e_3$ and $\vv b_0$ are admissible on $\R^2\times\{0\}$ in
the sense of Definition \ref{def1.1ad} and $\Supp\bigl(\vv b_0-\vv
e_3\bigr)(x_1,x_2,\cdot)\subset [-K, K]$ for some positive constant
$K.$  Then  for $\e_0$ sufficiently small, there exists a
$\vv\Psi=(\psi_1,\psi_2,\psi_3)^T$ which satisfies \beq\label{B3p0}
\|(\psi_1,\psi_2,\psi_3\|_{B^s_{p,1}}\leq C(K,\e_0)\|(b_0^1,b_0^2,
b_0^3-1)\|_{B^s_{p,1}}, \eeq and \beq\label{B3p1}\begin{split}
&b_0^1=\p_{x_2}\psi_1\p_{x_3}\psi_2+\p_{x_3}\psi_1(1-\p_{x_2}\psi_2),\quad
b_0^2=
\p_{x_3}\psi_1\p_{x_1}\psi_2+\p_{x_3}\psi_2(1-\p_{x_1}\psi_1),\\
&b_0^3=(1-\p_{x_1}\psi_1)(1-\p_{x_2}\psi_2)-\p_{x_2}\psi_1\p_{x_1}\psi_2,\quad\mbox{and}\quad
\det\,(I-\na_x\vv\Psi)=1. \end{split} \eeq Moreover, we define
\beq\label{B3p2}\begin{aligned}
\bar{\vv b}_0\eqdefa\bigl(&(1-\p_{x_2}\psi_2)(1-\p_{x_3}\psi_3)-\p_{x_3}\psi_2\p_{x_2}\psi_3, \p_{x_3}\psi_2\p_{x_1}\psi_3+\p_{x_1}\psi_2(1-\p_{x_3}\psi_3), \\
& \p_{x_1}\psi_2\p_{x_2}\psi_3+\p_{x_1}\psi_3(1-\p_{x_2}\psi_2)\bigr)^T\quad\mbox{and}\\
\tilde{\vv b}_0\eqdefa \bigl(&\p_{x_3}\psi_1\p_{x_2}\psi_3+\p_{x_2}\psi_1(1-\p_{x_3}\psi_3), (1-\p_{x_1}\psi_1)(1-\p_{x_3}\psi_3)-\p_{x_3}\psi_1\p_{x_1}\psi_3, \\
&\p_{x_2}\psi_1\p_{x_1}\psi_3+\p_{x_2}\psi_3(1-\p_{x_1}\psi_1)\bigr)^T,
\end{aligned}\eeq then $U_0\eqdefa (\bar{\vv b}_0,\tilde{\vv
b}_0,\vv b_0)$ satisfies  \eqref{B3}, and for $\vv e_1=(1,0,0)^T,
\vv e_2=(0,1,0)^T,$ \beq \label{B3p3} \|\bar{\vv b}_0-\vv
e_1\|_{B^{s-1}_{p,1}}+\|\tilde{\vv b}_0-\vv
e_2\|_{B^{s-1}_{p,1}}\leq C(K,\e_0)\|\bigl(b_0^1,b_0^2,
b_0^3-1\bigr)\|_{B^{s}_{p,1}}. \eeq }
\end{prop}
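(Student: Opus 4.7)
The plan is to reduce the nonlinear system \eqref{B3p1}--$\det(I-\na\vv\Psi)=1$ to three scalar transport equations along $\vv b_0$. Setting $M\eqdef I-\na\vv\Psi$, a direct inspection identifies the right-hand sides of \eqref{B3p1} as the entries of the third column of the classical adjugate $\text{adj}(M)$. Under $\det M=1$ one has $\text{adj}(M)=M^{-1}$, so \eqref{B3p1} is equivalent to $M\vv b_0=\vv e_3$, namely the three transport equations
\[
\vv b_0\cdot\na\psi_i=b_0^i-e_3^i,\qquad i=1,2,3.
\]
Moreover, cofactor expansion of $\det M$ along its third column gives $\det M=b_0^3-\vv b_0\cdot\na\psi_3$, so the determinant constraint is exactly the $i=3$ transport equation. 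Hence it suffices to solve the three transport equations; \eqref{B3p1} and $\det M=1$ then follow, and the single hidden compatibility $\dv\vv b_0=0$ is built into the cofactor (Piola) structure.

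To solve these equations I would use the flow $X(t,x)$ of $\vv b_0$ from Definition \ref{def1.1ad}. The smallness \eqref{LL1pq} together with the embedding $B^s_{p,1}\hookrightarrow C^1$ and the condition $\Supp(\vv b_0-\vv e_3)(x_h,\cdot)\subset[-K,K]$ force $\vv b_0\equiv\vv e_3$ (with vertical trajectories) outside the slab $|x_3|\leq K$. In particular, the map $\Xi:(y_h,t)\mapsto X(t,y_h,0)$ is a global $C^1$-diffeomorphism of $\R^3$, $O(\e_0)$-close to the identity modulo a vertical translation, which straightens $\vv b_0\cdot\na$ into $\p_t$; the admissibility hypothesis becomes precisely the one-dimensional compatibility $\int_\R(b_0^i-e_3^i)\circ\gamma\,dt=0$ on every trajectory $\gamma$. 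One can then define
\[
\psi_i(x)\eqdef\int_{-\infty}^{\tau(x)}(b_0^i-e_3^i)\bigl(X(t,Y_h(x),0)\bigr)\,dt,\qquad(\tau(x),Y_h(x))\eqdef\Xi^{-1}(x),
\]
which vanishes for $|x_3|>K$ by combining admissibility with $\vv b_0=\vv e_3$ outside the slab.

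For \eqref{B3p0}, writing $\vv\Psi=\tilde{\vv\Psi}\circ\Xi^{-1}$ with $\tilde{\vv\Psi}$ the one-variable $t$-antiderivative of the compactly supported pullback $(\vv b_0-\vv e_3)\circ\Xi$, a Hardy-type bound exploiting the $t$-support of length $O(K)$ controls $\|\tilde{\vv\Psi}\|_{B^s_{p,1}}$ by $CK\|\vv b_0-\vv e_3\|_{B^s_{p,1}}$, and Besov composition estimates for the $\e_0$-perturbation $\Xi$ (with composition norm depending only on $\e_0$, since $s>3/p$) then yield $\|\vv\Psi\|_{B^s_{p,1}}\leq C(K,\e_0)\|\vv b_0-\vv e_3\|_{B^s_{p,1}}$. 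Once $\vv\Psi$ is constructed, the vectors $\bar{\vv b}_0,\tilde{\vv b}_0$ in \eqref{B3p2} are by inspection the first and second columns of $\text{adj}(M)=M^{-1}$, so $U_0=M^{-1}$ satisfies $\det U_0=1$, and $\dv\bar{\vv b}_0=\dv\tilde{\vv b}_0=0$ follows from the Piola identity $\sum_j\p_j(\text{cof}\,M)_{ij}=0$. The bound \eqref{B3p3} is then obtained by applying the $B^{s-1}_{p,1}$ algebra property (valid since $s-1>3/p$) to the quadratic polynomial expressions \eqref{B3p2} and invoking \eqref{B3p0}.

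The main obstacle is the Besov estimate: admissibility holds along the curved characteristics of $\vv b_0$ rather than along vertical lines, so one cannot simply substitute $\p_3$ for $\vv b_0\cdot\na$ and antidifferentiate in the vertical variable. This forces the straightening through $\Xi$ and requires the composition and Hardy-type estimates to be carried out with constants depending on both $K$ and $\e_0$, which is the source of the somewhat unusual constant $C(K,\e_0)$ in \eqref{B3p0}.
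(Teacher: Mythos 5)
Your construction is, in substance, the paper's own: the reduction of \eqref{B3p1} to the three linear transport equations $\vv b_0\cdot\na\psi_i=b_0^i-\delta_{i3}$, the solution by integration along the flow of $\vv b_0$ (admissibility giving the matching on $\R^2\times\{0\}$, the support condition and $b_0^3\geq\f12$ localizing the time integral to $|t|\lesssim K$, hence the constant $C(K,\e_0)$), the Besov bound via flow/composition estimates, and the endgame identifying $\bar{\vv b}_0,\tilde{\vv b}_0$ with columns of the adjugate of $I-\na\vv\Psi$, with $\dv\bar{\vv b}_0=\dv\tilde{\vv b}_0=0$ from the Piola identity, are exactly Lemma \ref{lemappa} and the proof of Proposition \ref{LL1} in the appendix.

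The one step where you deviate is the reduction itself, and as stated it is circular. Expanding $\det M$ ($M=I-\na\vv\Psi$) along the line containing the derivatives of $\psi_3$ gives $\det M=(1-\p_3\psi_3)\,\mathrm{cof}(M)_{33}-\p_1\psi_3\,\mathrm{cof}(M)_{31}-\p_2\psi_3\,\mathrm{cof}(M)_{32}$; your identity $\det M=b_0^3-\vv b_0\cdot\na\psi_3$ requires identifying these cofactors with $b_0^3,b_0^1,b_0^2$, which is precisely the nonlinear content of \eqref{B3p1} that is to be proved. Consequently, "hence it suffices to solve the three transport equations; \eqref{B3p1} and $\det M=1$ then follow" is not automatic: the linear equations alone do not force $\det M=1$ (for instance, when $\vv b_0\equiv\vv e_3$ they only say $\p_3\vv\Psi=0$), so this is a property of the particular flow-built solution, not of arbitrary solutions, and the appeal to "the Piola structure" does not supply it. The paper closes the loop differently: the Piola identity for the cofactors combined with $\dv\vv b_0=0$ and the decay of $b_0^3-1$ yields \eqref{apa2} from \eqref{apa1}, and inverting the resulting $2\times2$ system (whose determinant is $b_0^3\geq\f12$) produces the linear equations \eqref{apa3} and \eqref{apa14}. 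Alternatively, for your explicit solution one can recover $\det(I-\na\vv\Psi)=1$ by noting that $\Phi(x)=x-\vv\Psi(x)$ conjugates the flow of $\vv b_0$ (volume preserving, since $\dv\vv b_0=0$) to the unit vertical translation, so $\det\na\Phi$ is constant along trajectories and equals $1$ in the region $x_3>K$ where $\vv\Psi$ vanishes; with that (or the paper's) argument inserted, your proof matches the paper's.
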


The proof of this proposition is postponed in Appendix
\ref{appenda}.

With $U_0$ obtained in Proposition \ref{LL1}, we shall first
investigate the global wellposedness to the following system with
sufficiently small $u_0:$
\begin{equation}\label{B4}
 \left\{\begin{array}{l}
\displaystyle \p_tU+\vv u\cdot\na U=\na\vv u U,
\qquad (t,x)\in\R^+\times\R^3, \\
\displaystyle \pa_t\vv u +\vv u\cdot\na\vv u -\D\vv u+\na p=-\f12\na|\vv b|^2+\vv b\cdot\na\vv b, \\
\displaystyle \dv\,\vv u =0\quad\mbox{and}\quad \quad\dv\, U=0,, \\
\displaystyle U|_{t=0}=U_0,\quad \vv u|_{t=0}=\vv u_0,
\end{array}\right.
\end{equation}
where the $3\times 3$ matrix $U=\bigl(\bar{\vv b}, \tilde{\vv b},
\vv b\bigr),$ and $\bar{\vv
b}=\bigl(\bar{b}^1,\bar{b}^2,\bar{b}^3\bigr)^T,$  $\tilde{\vv
b}=\bigl(\tilde{b}^1,\tilde{b}^2,\tilde{b}^3\bigr)^T.$ In
particular, for any smooth enough solution $(U,\vv u)$ of
\eqref{B4}, $(\vv b,\vv u)$ must be a smooth enough solution of
\eqref{B1}.

The main result concerning the wellposedness of the system
\eqref{B4} can be stated as follows:

\begin{thm}\label{th1}
{\sl Let $s_1>\f54,$ $s_2\in (-\f12,-\f14)$ and $p\in (1,2).$ Let
$\vv u_0\in \dH^{s_2}\cap\dB^{\f3p-1}_{p,1}(\R^3)$ with $\na\vv
u_0\in B^{s_1+\f3p-\f32}_{p,1}(\R^3)$ and
$U_0=\bigl(I-\na_x{\vv\Psi}\bigr)^{-1}$ with
$\vv\Psi=(\psi_1,\psi_2,\psi_3)^T$ satisfying
$\na\vv\Psi\in\dB^{s_2+\f3p-\f32}_{p,2}\cap
B^{s_1+\f3p-\f12}_{p,1}(\R^3)$ and $\det(I-\na_x{\vv\Psi})=1.$ We
assume that \beq\label{B2}
\|\na\vv\Psi\|_{\dB^{s_2+\f3p-\f32}_{p,2}\cap
B^{s_1+\f3p-\f12}_{p,1}}+\|\vv
u_0\|_{\dH^{s_2}\cap\dB^{\f3p-1}_{p,1}}+\|\na\vv
u_0\|_{B^{s_1+\f3p-\f32}_{p,1}}\leq\e_0\eeq for some $\e_0$
sufficiently small. Then
  \eqref{B4} has a unique global solution
$(U, \vv u, p)$  (up to a constant for $p$), with $U=\bigl(\bar{\vv
b}, \tilde{\vv b}, \vv b\bigr),$ so that \beq\label{th1wq}
\begin{split}
&\bar{\vv b}-\vv e_1,\, \tilde{\vv b}-\vv e_2\in  C([0,\infty); \dH^{s_1+1}\cap
\dH^{s_2+1}(\R^3)),\\
& \vv b-\vv e_3\in C([0,\infty); \dH^{s_1+1}\cap
\dH^{s_2}(\R^3))\cap
L^2(\R^+;\dH^{s_1+1}\cap \dH^{s_2+1}(\R^3)),\\
&\vv u\in  C([0,\infty); \dH^{s_1+1}\cap \dH^{s_2}(\R^3))\cap
L^2(\R^+;\dH^{s_1+2}\cap \dH^{s_2+1}(\R^3))\\
&\qquad\cap
L^1(\R^+; \dH^{s_1+2}\cap\dB^{\f52}_{2,1}(\R^3)),\\
&\na p\in  L^2(\R^+;\dH^{s_1}\cap \dH^{s_2}(\R^3)).
\end{split}
\eeq
 Furthermore, there holds
 \beq\label{th1wr}
\begin{split}
&\|(\bar{\vv b}-\vv e_1,\tilde{\vv b}-\vv e_2)\|_{L^\infty(\R^+;\dH^{s_1+1}\cap\dH^{s_2+1})}+\|\vv b-\vv e_3\|_{L^\infty(\R^+;\dH^{s_1+1}\cap\dH^{s_2})}
\\
&\quad+\|\vv u\|_{L^\infty(\R^+; \dH^{s_1+1}\cap\dH^{s_2})}+\|\vv
b-\vv e_3\|_{L^2(\R^+;\dH^{s_1+1}\cap\dH^{s_2+1})}
\\
&\quad+\| \vv
u\|_{L^2(\R^+;\dH^{s_1+2}\cap\dH^{s_2+1})}+\|\vv u\|_{L^1(\R^+; \dH^{s_1+2}\cap\dB^{\f52}_{2,1})}+\|\na p\|_{L^2(\R^+;\dH^{s_1}\cap\dH^{s_2})}\\
&\leq C\bigl(\|\na\vv\Psi\|_{\dB^{s_2+\f3p-\f32}_{p,2}\cap
B_{p,1}^{s_1+\f3p-\f12}}+\|\vv
u_0\|_{\dH^{s_2}\cap\dB^{\f3p-1}_{p,1}}+\|\na\vv
u_0\|_{B^{s_1+\f3p-\f32}_{p,1}}\bigr).
\end{split}
\eeq }
\end{thm}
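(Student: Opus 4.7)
The plan is to pass from \eqref{B4} to its Lagrangian reformulation \eqref{B11}. Since $\det(I-\na_x\vv\Psi)=1$, the matrix $U_0=(I-\na_x\vv\Psi)^{-1}$ is divergence-free, so if $X(t,y)=y+Y(t,y)$ denotes the flow of $\vv u$ starting from $X_0$ chosen so that $\na_y X_0=U_0$, then $X(t,\cdot)$ is volume-preserving for all $t$. Writing $q(t,y)\eqdefa(p+\f12|\vv b|^2)\circ X(t,y)$, the unknown $Y$ satisfies a second-order system whose principal linear part takes the schematic form
\begin{equation*}
Y_{tt}-\D Y_t-\pa_{y_3}^2 Y+\na q=\vv f(Y,q),\qquad \na\cdot Y_t=g(Y),
\end{equation*}
with $\vv f,g$ formally quadratic in $(\na Y,Y_t)$. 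Dissipation is only partial: we gain $\D Y_t$ but only $\pa_{y_3}^2 Y$, reflecting the fact that the background magnetic field points in the $\vv e_3$-direction.

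Next I would derive the linear estimates for this degenerate system by the anisotropic Littlewood--Paley analysis of \cite{XLZMHD1}. Using dyadic blocks adapted to the $y_3$-variable, on each block the resolvent splits into two regimes separating high and low vertical frequencies, from which I would extract $L^\infty_t(\dH^{s_1+1}\cap\dH^{s_2})$ and $L^2_t(\dH^{s_1+2}\cap\dH^{s_2+1})$ bounds for $Y_t$, an $L^2_t(\dH^{s_1}\cap\dH^{s_2})$ bound for $\na q$, and, most importantly, the $L^1_t(\dH^{s_1+2}\cap\dB^{\f52}_{2,1})$ bound for $Y_t$. This $L^1_t$ bound is the one needed to handle the transport-type terms $\vv u\cdot\na$ in the nonlinear closure; it is the most delicate and is the reason for the unusual low-regularity hypotheses in \eqref{B2}: the $\dB^{\f3p-1}_{p,1}$ and $\dH^{s_2}$ scales absorb low vertical frequencies where the parabolic gain of $\pa_{y_3}^2$ degenerates, while the $\dH^{s_1+1}$ scale handles the high ones directly through the dissipation.

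I would then close a continuity argument on the functional
\begin{equation*}
E(T)\eqdefa\|(Y_t,\na Y)\|_{L^\infty_T(\dH^{s_1+1}\cap\dH^{s_2})}+\|Y_t\|_{L^1_T(\dH^{s_1+2}\cap\dB^{\f52}_{2,1})}+\|\na q\|_{L^2_T(\dH^{s_1}\cap\dH^{s_2})},
\end{equation*}
by bounding $\vv f(Y,q)$ and $g(Y)$ through the cofactor expansion of $\mathcal{A}_Y=(I+\na_y Y)^{-1}$ and the identity $\det(I+\na_y Y)=1$, which produce cancellations recasting most quadratic terms either as $\pa_{y_3}$-derivatives (so that the good dissipation acts on them) or as products of one factor controlled in $L^\infty_t$ with a factor controlled in $L^1_t$ or $L^2_t$. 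The resulting estimate $E(T)\lesssim\e_0+E(T)^2$ then bootstraps for small $\e_0$, uniqueness follows from a parallel estimate for the difference of two solutions, and returning to Eulerian coordinates via $x=X(t,y)$ yields \eqref{th1wq}--\eqref{th1wr}. The main obstacle, as the authors emphasize, is that the 2-D identity $\pa_{y_1}Y^1+\pa_{y_2}Y^2=\rho(Y)$ of \cite{XLZMHD1} has no 3-D analogue that could promote dissipation from $\pa_{y_3}$ to the horizontal directions; consequently every nonlinear contribution involving only horizontal derivatives of $Y$ must be rewritten, via the 3-D cofactor identities for $\mathcal{A}_Y$, as a $\pa_{y_3}$-derivative of a quadratic expression or traded against $\na q$ and $Y_t$ in their good norms. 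This algebraic bookkeeping, performed block by block in the anisotropic Littlewood--Paley decomposition so that the $\pa_{y_3}$-gain lands at the right frequency scale, is where the bulk of the technical work lies.
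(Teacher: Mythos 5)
Your overall strategy coincides with the paper's: reformulate \eqref{B4} in Lagrangian coordinates, exploit the degenerate dissipation of $Y_{tt}-\D Y_t-\p_3^2Y$ through an anisotropic Littlewood--Paley analysis to obtain the crucial $L^1$-in-time bounds on $Y_t$ and $\na q$, close a smallness bootstrap, and return to Eulerian variables. But two points are genuine gaps for this particular theorem. First, the step ``choose $X_0$ so that $\na_y X_0=U_0$'' fails as stated: a prescribed matrix field is in general not the Jacobian of any map, so one must instead solve $\na_y X_0(y)=U_0\circ X_0(y)$. This is solvable precisely because of the hypothesis $U_0=(I-\na_x\vv\Psi)^{-1}$: the inverse map satisfies $\na_x X_0^{-1}=I-\na_x\vv\Psi$, and the paper constructs $X_0(y)=y+Y_0(y)$ with $Y_0=\vv\Psi(y+Y_0)$ by the implicit function theorem (Lemma \ref{LL2}), together with quantitative Besov bounds on $\na Y_0$.

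Second, your sketch gives no mechanism for converting the hypotheses \eqref{B2}, stated in $L^p$-based Besov norms of $(\na\vv\Psi,\vv u_0)$, into the isotropic and anisotropic norms of the Lagrangian data $(Y_0,\,Y_1=\vv u_0\circ X_0)$ that the bootstrap requires (e.g. $\|Y_0\|_{\cB^{\f52,0}\cap\cB^{s_1+2,0}}$, $\|Y_1\|_{\cB^{\f12,0}\cap\cB^{s_1,0}}$, $\|Y_1\|_{\dH^{s_2}}$ with $s_2<0$), nor for converting the Lagrangian bounds back into \eqref{th1wq}--\eqref{th1wr}. Once Theorem \ref{T} is in hand, this transfer is exactly where the work of the present theorem lies: it uses composition estimates for Besov norms under volume-preserving diffeomorphisms (Lemma \ref{funct}), applied both to the data (yielding \eqref{f1}) and to $(I+\na Y,\,Y_t,\,q)\circ X^{-1}$, together with the embedding $\dB^{s+\f3p-\f32}_{p,1}\hookrightarrow\cB^{s,0}$ of \eqref{f0pq}; this is where the restrictions $p<2$ and $s_2+\f3p-\f12>0$ actually enter, not merely as a low-vertical-frequency absorption as you suggest. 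Finally, the Eulerian pressure bound in \eqref{th1wr} is not obtained by composing $\na q$ with $X^{-1}$ but from the elliptic relation \eqref{lpqw} and product laws; without some such argument the bound on $\|\na p\|_{L^2(\R^+;\dH^{s_1}\cap\dH^{s_2})}$ is unaccounted for.
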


In order to avoid the difficulty of propagating anisotropic
regularity for the transport equation in the system \eqref{B4}, we
shall  reformulate \eqref{B4} in the Lagrangian coordinates. Toward
this, we need first to find a volume preserving diffeomorphism
$X_0(y)$ on $\R^3$ so that there holds \eqref{app0}.

\begin{lem}\label{LL2} {\sl Let $p\in (1,2),s>1+\f3p.$ Let
$\vv\Psi=(\psi_1,\psi_2,\psi_3)^T$ satisfy $\na\vv\Psi\in
B^{s-1}_{p,1}(\R^3)$, $det\bigl(I-\na\vv \Psi\bigr)=1$ and
$\|\na\vv\Psi\|_{B^{s-1}_{p,1}}\leq \e_0$ for some $\e_0$
sufficiently small.  Then for $U_0=\bigl(I-\na\vv\Psi\bigr)^{-1},$
there exists $Y_0(y)=(Y_0^1(y),Y_0^2(y),Y_0^3(y))^T$ so that
$X_0(y)=y+Y_0(y)$ satisfies \beq\label{BB2} U_0\circ
X_0(y)=\na_yX_0(y)=I+\na_yY_0(y)\quad \mbox{and}\quad \|\na_y
Y_0\|_{B^{s-1}_{p,1}}\leq C\|\na_x\vv\Psi\|_{B^{s-1}_{p,1}}. \eeq }
\end{lem}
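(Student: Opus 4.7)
The natural construction is to take $X_0 \eqdefa \Phi^{-1}$, where $\Phi(x) \eqdefa x - \vv\Psi(x)$, and then set $Y_0 \eqdefa X_0 - \Id$. Indeed, the chain rule applied to $\Phi \circ X_0 = \Id$ immediately yields
\begin{equation*}
\na_y X_0(y) = \bigl(\na_x\Phi(X_0(y))\bigr)^{-1} = (I - \na_x\vv\Psi)^{-1}\circ X_0(y) = U_0 \circ X_0(y),
\end{equation*}
and the volume-preserving property $\det(\na_y X_0) = 1$ is automatic from the hypothesis $\det(I - \na_x\vv\Psi) = 1$. So the strategy has two steps: first produce the global inverse $X_0$, then estimate $Y_0$ in $B^s_{p,1}$.

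For the first step, the hypothesis $s > 1 + 3/p$ together with the embedding $B^{s-1}_{p,1}(\R^3) \hookrightarrow L^\infty(\R^3)$ yields $\|\na\vv\Psi\|_{L^\infty} \lesssim \e_0$. For $\e_0$ small enough this makes $\Phi$ a bi-Lipschitz local $C^1$-diffeomorphism, and the equation $\Phi(x) = y$ is solvable by Banach's fixed point applied to the contraction $x \mapsto y + \vv\Psi(x)$, giving a well-defined global inverse $X_0 : \R^3 \to \R^3$.

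For the second step, composing $\Phi \circ X_0 = \Id$ gives the identity $Y_0 = \vv\Psi \circ (\Id + Y_0)$. I would establish $\|\na Y_0\|_{B^{s-1}_{p,1}} \lesssim \|\na\vv\Psi\|_{B^{s-1}_{p,1}}$ via a Banach fixed-point argument for the map $T : Y \mapsto \vv\Psi \circ (\Id + Y)$ on the ball
\begin{equation*}
\mathcal{B}_R \eqdefa \bigl\{ Y : \|\na Y\|_{B^{s-1}_{p,1}} \leq R \bigr\}, \qquad R \eqdefa 2C_0\|\na\vv\Psi\|_{B^{s-1}_{p,1}},
\end{equation*}
where $C_0$ is the constant from the composition estimate below. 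The essential ingredient is the composition bound
\begin{equation*}
\|f \circ (\Id + Y)\|_{B^{s-1}_{p,1}} \leq C \|f\|_{B^{s-1}_{p,1}},
\end{equation*}
valid whenever $\|\na Y\|_{L^\infty}$ is sufficiently small, combined with the fact that $B^{s-1}_{p,1}(\R^3)$ is a Banach algebra when $s-1 > 3/p$. Applied to $\na T(Y) = (\na\vv\Psi)\circ(\Id + Y)\cdot(I + \na Y)$ and to the corresponding difference $T(Y_1) - T(Y_2)$, these yield both stability of $\mathcal{B}_R$ under $T$ and contractivity, once $\e_0$ is small enough. The fixed point is the required $Y_0$, and the $L^\infty$ smallness of $\na Y_0$ coming from the embedding validates a posteriori the use of the composition bound.

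The main obstacle is precisely the composition estimate in $B^{s-1}_{p,1}$ for a near-identity diffeomorphism; this is not entirely trivial, since Besov norms are not invariant under general compositions. It can, however, be handled by a Bony paraproduct decomposition that exploits the smallness of $\|\na Y\|_{L^\infty}$ to absorb the error terms. Such estimates are by now standard in the Lagrangian analysis of incompressible systems, and essentially the same argument as in the two-dimensional counterpart \ccite{XLZMHD1} carries through in $\R^3$ without modification. Once this composition bound is in place, all remaining manipulations are routine product and difference estimates in $B^{s-1}_{p,1}$.
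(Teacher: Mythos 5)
Your construction is the same as the paper's: you invert $\Phi=\Id-\vv\Psi$ (equivalently, solve $Y_0=\vv\Psi(y+Y_0)$, which the paper does via the implicit function theorem), and the chain-rule identity $\na_yX_0=(I-\na_x\vv\Psi)^{-1}\circ X_0=U_0\circ X_0$ is exactly the first half of \eqref{BB2}. The gap is in the second half. Your whole estimate rests on the claim that $\|f\circ(\Id+Y)\|_{B^{s-1}_{p,1}}\leq C\|f\|_{B^{s-1}_{p,1}}$ holds ``whenever $\|\na Y\|_{L^\infty}$ is sufficiently small.'' This is false in the range of regularity the lemma actually covers: here $s-1>\f3p>\f32$ and, in the application to Theorem \ref{th1}, $s-1>2$. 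For $s-1>2$, derivatives of order $\geq 2$ of $f\circ(\Id+Y)$ involve $\na^2Y$ and higher, so no bound by $\|f\|_{B^{s-1}_{p,1}}$ alone with only Lipschitz smallness of $Y$ can hold (take $Y$ Lipschitz-small but not better than $C^1$). The correct composition estimates (Lemma \ref{funct} in the appendix) carry extra factors such as $\|\Psi\|_{\dot B^{s+\f12}_{p,r}}$ or $1+\|\D\Psi\|_{B^{s-2}_{p,r}}$, i.e.\ they depend on high norms of the displacement. Inside your ball $\mathcal B_R$ those norms of $Y$ are controlled, so the scheme is repairable, but as written the key estimate is misstated, and you would also have to address the usual loss of derivatives in the difference bound for $T(Y_1)-T(Y_2)$ (contraction in the top norm fails; one normally contracts in a weaker norm and closes the high norm by a priori estimates).

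The paper sidesteps all of this: from $\na_x(X_0^{-1})=U_0^{-1}=I-\na_x\vv\Psi$ one knows the inverse map explicitly, $X_0^{-1}=\Id-\vv\Psi$, so writing $Y_0=\vv\Psi\circ X_0=\vv\Psi\circ\Phi^{-1}$ with $\Phi=\Id-\vv\Psi$ puts the \emph{known} function $\vv\Psi$ both inside and outside the composition. Lemma \ref{funct} (the $v\circ\Phi^{-1}$ estimates) then gives $\|\na_yY_0\|_{B^{s-1}_{p,1}}\leq C\|\na_x\vv\Psi\|_{B^{s-1}_{p,1}}$ directly, with constants depending only on norms of $\vv\Psi$ — no fixed point or bootstrap in the Besov ball, and no self-referential dependence on $Y_0$. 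If you want to keep your route, replace your composition bound by the Lemma \ref{funct}-type estimate and close the resulting nonlinear inequality by smallness; but the cleanest fix is exactly the paper's observation that the inverse diffeomorphism is explicit.
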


\begin{proof} Let $Y=(Y^1, Y^2, Y^3)^T,$ we denote
\beq\label{app3} \vv F(y,Y)\eqdefa Y-\vv\Psi(y+Y), \eeq with $\vv
F(y,Y)=(F^1(y,Y), F^2(y,Y), F^3(y,Y))^T$.

It is easy to observe from the assumption:
$\mathrm{det}\,(I-\na_x\vv\Psi)=\det\, U_0=1,$ that \beno
\mathrm{det}\f{\p(F^1,F^2,F^3)}{\p(Y^1,Y^2,Y^3)}=\det\,(I-\na_x\vv\Psi)|_{x=y+Y}=1,
\eeno from which, $\|\na\vv \Psi\|_{L^\infty}\leq C\e_0$ for some
$\e_0$ sufficiently small,  and the classical
 implicit function
theorem, we deduce that around every point $y,$ the function
$\vv F(y,Y)=\vv 0$ determines a unique function
$Y_0(y)=(Y_0^1(y),Y_0^2(y),Y_0^3(y))^T$ so that \beno\vv F(y,
Y_0(y))=\vv 0,\eeno or equivalently \beq \label{app3ad}
Y_0(y)=\vv\Psi(y+Y_0(y)).
 \eeq
Then denoting by $X_0(y)=y+Y_0(y)$, we have \beq
\label{app3cd}\begin{aligned}
\p_{y_1}Y^j_0(y)=&\p_{x_1}\psi_j\circ X_0(y)(1+\p_{y_1}Y^1_0(y))+\p_{x_2}\psi_j\circ X_0(y)\p_{y_1}Y^2_0(y)\\
&+\p_{x_3}\psi_j\circ X_0(y)\p_{y_1}Y^3_0(y),\quad\mbox{for}\quad
j=1,2,3.
\end{aligned}\eeq
Due to the fact that $\det\,(I-\na_x\vv\Psi)=\mathrm{det}\,U_0=1,$
we conclude that $I-\na_x\vv\Psi$ equals the adjoint matrix of
$U_0\eqdefa \bigl(b_{ij}\bigr)_{i,j=1,2,3}$,  which along with
\eqref{app3cd} ensures that \beq \label{app3bd}\begin{aligned}
\p_{y_1}Y^1_0(y)=b_{11}\circ X_0(y)-1,\quad
\p_{y_1}Y^2_0(y)=b_{21}\circ X_0(y),\quad
\p_{y_1}Y^3_0(y)=b_{31}\circ X_0(y).
\end{aligned}\eeq
 Along the same line, one has \beno \p_{y_j}Y^i_0(y)=b_{ij}\circ
X_0(y)-\d_{ij}. \eeno which implies the first part of \eqref{BB2}.
This in particular leads to \beno
\na_x\bigl(X_0^{-1}(x)\bigr)=\bigl((\na_yX_0)\circ
X_0^{-1}(x)\bigr)^{-1}=U_0^{-1}(x)=I-\na\vv\Psi(x),\eeno from which,
\eqref{app3ad}, $\|\na\vv\Psi\|_{B^{s-1}_{p,1}}\leq \e_0$ for
$s>1+\f3p$ and Lemma \ref{funct}, we achieve the second part of
\eqref{BB2}.
\end{proof}

 With $X_0(y)=y+Y_0(y)$ obtained in  Lemma \ref{LL2}, we now
define the flow map $X(t,y)$ by \beno\left\{\begin{aligned}
&\f{dX(t,y)}{dt}=\vv u(t,X(t,y)),\\
&X(t,y)|_{t=0}=X_0(y),
\end{aligned}\right.\eeno
and $Y(t,y)$ through \beq\label{B6} X(t,y)=X_0(y)+\int_0^t\vv
u(s,X(s,y))\,ds\eqdefa y+Y(t,y). \eeq Then by virtue of Proposition
1.8 of \cite{Majda} and \eqref{BB2}, we deduce from \eqref{B4} that
\beq\label{B7} U(t, X(t,y))=\na_y X(t,y)=I+\na_y
Y(t,y)\quad\mbox{and}\quad\mathrm{det}\, \bigl(I+\na_y
Y(t,y)\bigr)=1. \eeq Denoting $ U(t,
X(t,y))\eqdefa(a_{ij})_{i,j=1,2,3}$ and
$\cA_Y\eqdefa(b_{ij})_{i,j=1,2,3}$ with
\beq\label{B8}\begin{aligned} &b_{11}=
(1+\p_2Y^2)(1+\p_3Y^3)-\p_3Y^2\p_2Y^3,
\quad b_{12}=\p_3Y^1\p_2Y^3-\p_2Y^1(1+\p_3Y^3),\\
&b_{13}=\p_2Y^1\p_3Y^2-\p_3Y^1(1+\p_2Y^2),\qquad\quad\ \
b_{21}=\p_3Y^2\p_1Y^3-\p_1Y^2(1+\p_3Y^3),\\
&b_{22}=(1+\p_1Y^1)(1+\p_3Y^3)-\p_3Y^1\p_1Y^3,\quad
b_{23}=\p_3Y^1\p_1Y^2-(1+\p_1Y^1)\p_3Y^2,\\
&b_{31}=\p_1Y^2\p_2Y^3-(1+\p_2Y^2)\p_1Y^3,\qquad\quad\ \
b_{32}=\p_2Y^1\p_1Y^3-(1+\p_1Y^1)\p_2Y^3,\\
&b_{33}=(1+\p_1Y^1)(1+\p_2Y^2)-\p_2Y^1\p_1Y^2.
\end{aligned}
\eeq It is easy to observe that $\sum_{i=1}^3\f{\p b_{ij}}{\p
y_i}=0$ (see also  Lemma 2.1 of \cite{XZZ}). Moreover, as
$\mathrm{det}\, U=1,$ $\cA_Y=(I+\na_y Y)^{-1}$.  Then  it follows
from \eqref{B7} that \beq\label{B9} \vv b\circ
X(t,y)=(\p_{y_3}Y^1,\p_{y_3}Y^2,1+\p_{y_3}Y^3)^T\quad\text{and}\quad\cA_Y(\vv
b\circ X)=(0,0,1)^T, \eeq from which, we infer \beq\label{B10}
\begin{split}
(\vv b\cdot\na_x\vv b)\circ X(t,y)&=[\dv_x(\vv b\otimes\vv b)]\circ X(t,y)\\
&=\na_y\cdot[\cA_Y(\vv b\circ X)\otimes(\vv b\circ X)]=\p_{y_3}(\vv
b\circ X)=\p_{y_3}^2Y(t,y).
\end{split} \eeq
Thanks to \eqref{B6} and \eqref{B10}, we can equivalently
reformulate \eqref{B4} as \beq\label{B11}\left\{\begin{aligned}
&Y_{tt}-\na_Y\cdot\na_Y Y_t-\p_{y_3}^2Y+\na_Yq=\vv 0,\\
&\na_Y\cdot Y_t=0,\\
&Y|_{t=0}=Y_0,\quad Y_t|_{t=0}=\vv u_0\circ X_0(y)\eqdefa Y_1,
\end{aligned}\right.\eeq
where $q(t,y)\eqdefa (p+\f12|\vv b|^2)\circ X(t,y)$ and
$\na_Y\eqdefa \mathcal{A}_Y^T\na_y$  with
$\cA_Y=(b_{ij})_{i,j=1,2,3}$ being determined by \eqref{B8}.
 Here and in
what follows, we always assume that $\|\na_y
Y\|_{L^\infty}\leq\f{1}{2}$. Under this assumption, we  rewrite
\eqref{B11} as \beq\label{B12}\left\{\begin{aligned}
&Y_{tt}-\Delta_y Y_t-\p_{y_3}^2 Y=\vv f(Y,q),\\
&\na_y\cdot Y=\r(Y),\\
&Y|_{t=0}=Y_0,\quad Y_t|_{t=0}=Y_1.
\end{aligned}\right.\eeq
where \beq\label{B13}\begin{aligned}
\vv f(Y,q)=&(\na_Y\cdot\na_Y-\Delta_y)Y_t-\na_Yq,\\
\r(Y)=&\na_y\cdot Y_0-\int_0^t(\na_Y-\na_y)\cdot Y_s
ds\\
=&-\sum_{i<j}\p_iY^i\p_jY^j+\sum_{i<j}\p_iY^j\p_jY^i-\p_1Y^1\p_2Y^2\p_3Y^3-\p_3Y^1\p_1Y^2\p_2Y^3\\
&-\p_2Y^1\p_3Y^2\p_1Y^3+\p_1Y^1\p_3Y^2\p_2Y^3+\p_3Y^1\p_2Y^2\p_1Y^3+\p_2Y^1\p_1Y^2\p_3Y^3.
\end{aligned}\eeq
Here we used \eqref{B8} and $\det\,(I+\na Y_0)=1$ to derive the
second equality of \eqref{B13}. Indeed thanks to \eqref{B8}, one has
\beq\label{B13ag}
\begin{split}
(\na_Y-\na_y)\cdot
Y_t=&\f{d}{dt}\Bigl(\sum_{i<j}\p_iY^i\p_jY^j-\sum_{i<j}\p_iY^j\p_jY^i+\p_1Y^1\p_2Y^2\p_3Y^3\\
&+\p_3Y^1\p_1Y^2\p_2Y^3+\p_2Y^1\p_3Y^2\p_1Y^3-\p_1Y^1\p_3Y^2\p_2Y^3\\
&-\p_3Y^1\p_2Y^2\p_1Y^3-\p_2Y^1\p_1Y^2\p_3Y^3\Bigr)\\
=&\f{d}{dt}\bigl(\mathrm{det}\,\bigl(I+\na_y Y\bigr)-1-\na_y\cdot
Y\bigr),
\end{split} \eeq which together with $\det\,(I+\na_y Y_0)=1$ ensures the second equality of \eqref{B13}. Moreover, the
equation $\na_y\cdot Y=\r(Y)$ implies that $\mathrm{det}\,(I+\na_y
Y)=1$ and $\na_Y\cdot Y_t=0$.

 For notational
convenience, we shall neglect the subscripts $x$ or $y$ in $\p, \na$
and $\D$ in the sequel. We make the convention that whenever $\na$
acts on $(U,\vv u, p),$  we understand $(\na U, \na\vv u, \na p)$ as
$(\na_x U, \na_x\vv u, \na_x p).$ While $\na$ acts on $(Y,q),$ we
understand $(\na Y, \na q)$ as $(\na_y Y, \na_y q).$ Similar
conventions for $\p$ and $\D.$

For  \eqref{B12}-\eqref{B13}, we have  the following global wellposedness
result:

\begin{thm}\label{T} {\sl Let $s_1>\f{5}{4}$, $s_2\in(-\f{1}{2},-\f{1}{4})$. Let $(Y_0, Y_1)$ satisfy $(\p_3Y_0,\,\D Y_0)
\in\dot{H}^{s_1}\cap\dot{H}^{s_2}\cap\cB^{\f{1}{2},0}\cap\cB^{s_1,0}(\R^3)$,
$Y_1\in\dot{H}^{s_1+1}\cap\dot{H}^{s_2}\cap\cB^{\f{1}{2},0}\cap\cB^{s_1,0}(\R^3)$
and \beq\label{B14}
 \mathrm{det}\,(I+\na Y_0)=1,\quad\na_{Y_0}\cdot Y_1=0, \quad\text{and}
\eeq
\beq\label{B15}\begin{aligned}
&\|
Y_0\|_{\dot{H}^{s_1+2}\cap\dot{H}^{s_2+2}}+\|\p_3Y_0\|_{\dot{H}^{s_2}}+\|Y_1\|_{\dot{H}^{s_1+1}\cap\dot{H}^{s_2}}\\
&\quad+\|
Y_0\|_{\cB^{\f{5}{2},0}\cap\cB^{s_1+2,0}}+\|\p_3Y_0\|_{\cB^{\f{1}{2},0}\cap\cB^{s_1,0}}+\|Y_1\|_{\cB^{\f{1}{2},0}\cap\cB^{s_1,0}}\leq
\e_0
\end{aligned}\eeq for some $\e_0$ sufficiently small.
 Then  \eqref{B12}-\eqref{B13} has a unique global solution $(Y,q)$ (up to a constant for $q$)
 so that
 \beq\label{B16}\begin{split}
 & Y\in C([0,\infty);\dot{H}^{s_1+2}\cap\dot{H}^{s_2+2}\cap\cB^{\f{5}{2},0}\cap\cB^{s_1+2,0}(\R^3))\quad\mbox{and}\\
 &\p_3Y\in C([0,\infty);\dot{H}^{s_2}\cap\cB^{\f{1}{2},0}\cap\cB^{s_1,0}(\R^3))\cap L^2(\R^+; \dot{H}^{s_1+1}
 \cap \dot{H}^{s_2+1}(\R^3)),\\
&Y_t\in C([0,\infty);\dot{H}^{s_1+1}\cap\dot{H}^{s_2}\cap\cB^{\f{1}{2},0}\cap\cB^{s_1,0}(\R^3))\cap
L^2(\R^+;\dot{H}^{s_1+2}\cap\dot{H}^{s_2+1}(\R^3))\\
&\qquad\quad\cap L^1(\R^+;
\cB^{\f{5}{2},0}\cap\cB^{s_1+2,0}(\R^3)),\\
& \na q\in L^2(\R^+;\dot{H}^{s_1}\cap\dot{H}^{s_2}(\R^3))\cap
L^1(\R^+;\dot{H}^{s_1}\cap\dot{H}^{s_2}(\R^3)). \end{split} \eeq
Moreover, there hold $\mathrm{det}\,(I+\na Y)=1,\, \na_Y\cdot
Y_t=0$, and \beq\label{B17}\begin{aligned}
&\|Y\|_{L^\infty_T(\dot{H}^{s_1+2}\cap\dot{H}^{s_2+2})}^2
+\|\p_3Y\|_{L^\infty_T(\dot{H}^{s_2})}^2+\|Y_t\|_{L^\infty_T(\dot{H}^{s_1+1}\cap\dot{H}^{s_2})}^2
+\|\p_3Y\|_{L^2_T(\dot{H}^{s_1+1}\cap\dot{H}^{s_2+1})}^2\\
 &\quad+\|Y_t\|_{L^2_T(\dot{H}^{s_1+2}\cap\dot{H}^{s_2+1})}^2
 +\|\na q\|_{L^2_T(\dot{H}^{s_1}\cap\dot{H}^{s_2})}^2+\|\na q\|_{L^1_T(\dot{H}^{s_1}\cap\dot{H}^{s_2})}^2\\
&\quad
+\|Y\|_{L^\infty_T(\cB^{\f{5}{2},0}\cap\cB^{s_1+2,0})}^2+\|\p_3Y\|_{L^\infty_T(\cB^{\f{1}{2},0}\cap\cB^{s_1,0})}^2
+\|Y_t\|_{L^1_T(\cB^{\f{5}{2},0}\cap\cB^{s_1+2,0})}^2\\
&\leq C\bigl(\|\p_3Y_0\|_{\dot{H}^{s_2}}^2+\|
Y_0\|_{\dot{H}^{s_1+2}\cap\dot{H}^{s_2+2}}^2+\|Y_1\|_{\dot{H}^{s_1+1}\cap\dot{H}^{s_2}}^2\\
&\quad+\|\p_3Y_0\|_{\cB^{\f{1}{2},0}\cap\cB^{s_1,0}}^2+\|
Y_0\|_{\cB^{\f{5}{2},0}\cap\cB^{s_1+2,0}}^2+\|Y_1\|_{\cB^{\f{1}{2},0}\cap\cB^{s_1,0}}^2\bigr).
\end{aligned}\eeq}
\end{thm}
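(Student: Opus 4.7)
The plan is to construct smooth local-in-time solutions by a standard Friedrichs scheme (equivalently, by regularising the first equation of \eqref{B12} with a vanishing viscosity $-\e\D Y$), derive a priori bounds of precisely the form \eqref{B17} with constants uniform in the regularisation parameter, and then pass to the limit. A continuity argument on the quantity $\cE(T)$ equal to the left-hand side of \eqref{B17} extends the local smooth solution to all times once \eqref{B15} is sufficiently small. The two geometric constraints $\det(I+\na Y)=1$ and $\na_Y\cdot Y_t=0$ are propagated automatically by the flow thanks to identity \eqref{B13ag} together with $\det(I+\na Y_0)=1$, so the whole proof reduces to closing the a priori estimate \eqref{B17}.

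The first main step is the linear analysis of the degenerate wave-type system
\beq\label{plan-lin}
Y_{tt}-\D Y_t-\p_3^2 Y=F,\qquad \na\cdot Y=G.
\eeq
Using the anisotropic Littlewood--Paley operators recalled in Subsection~\ref{subsect3.2}, I would localise at horizontal frequency $2^j$ and vertical frequency $2^k$. The characteristic equation $\tau^2+|\xi|^2\tau-\xi_3^2=0$ has roots $\tau_-\sim -|\xi|^2$ (a fast, strongly damped mode) and $\tau_+\sim -\xi_3^2/|\xi|^2$ (a slow, anisotropically damped mode). The $L^1_t$-mass of the slow-mode semigroup is then of order $|\xi|^2/\xi_3^2\sim 2^{2\max(j,k)-2k}$. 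Balancing this loss against the weights $2^{(5/2)j}$ and $2^{(s_1+2)j}$ of the $\cB^{5/2,0}$ and $\cB^{s_1+2,0}$ norms is what produces the crucial $L^1_T$-in-time control on $Y_t$; the fast mode furnishes the $L^2_T$ dissipative estimates on $\p_3 Y$ and $Y_t$; and energy identities (using $\p_3^2 Y$ as an effective restoring force) give the $L^\infty_T$ bounds. The isotropic $\dH^{s_1}$ and $\dH^{s_2}$ pieces follow by the same scheme after summing anisotropic blocks.

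The pressure is recovered from the elliptic equation obtained by applying $\na_Y\cdot$ to the first equation of \eqref{B11}. Using $\na_Y\cdot Y_t=0$, it can be rewritten as
\beq\label{plan-pq}
\D q=\dive\bigl[(\Id-\cA_Y^T)\na q\bigr]+\text{(quadratic terms in $\na Y$, $\na Y_t$)}+\p_3^2\rho(Y),
\eeq
which, under the standing hypothesis $\|\na Y\|_{L^\infty}\le\f12$, is a small perturbation of a Poisson equation and yields $\|\na q\|_{L^p_T(\dH^{s_1}\cap\dH^{s_2})}$ for $p\in\{1,2\}$ by standard elliptic regularity, provided the other unknowns are controlled. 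The appearance of $\p_3^2\rho(Y)$ on the right is exactly where the anisotropic time-integrability of $\p_3 Y$ is used to absorb two derivatives.

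The main obstacle will be to estimate the source
\beq\label{plan-force}
\vv f(Y,q)=(\na_Y\cdot\na_Y-\D)Y_t-\na_Y q
\eeq
in the $(L^1_T+L^2_T)(\dH^{s_1}\cap\dH^{s_2})$ and anisotropic Besov norms demanded by the linear step. The difficulty is that $\na_Y\cdot\na_Y-\D$ is at least linear in $\na Y$ and acts on $\na Y_t$, yet the linear estimates provide \emph{no} global time integrability on $\p_1 Y$ or $\p_2 Y$ — only on $\p_3 Y$. The resolution, and the place where the algebraic structure of \eqref{B11} is genuinely used, is to invoke the identity $\sum_i\p_i b_{ij}=0$ recorded after \eqref{B8} and rewrite each problematic term in the form $(b_{ij}-\d_{ij})\p_i\p_j Y_t=\p_i\bigl[(b_{ij}-\d_{ij})\p_j Y_t\bigr]$, so that one derivative falls on $Y$ (where only horizontal regularity is available) and the other on $Y_t$ (which is controlled in $L^1_T\cB^{s_1+2,0}$). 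Combined with Bony's anisotropic paraproduct decomposition and the product laws in $\cB^{s,0}$, this should yield $\|\vv f(Y,q)\|_{(L^1_T\cap L^2_T)(\dH^{s_1}\cap\dH^{s_2})}\le C\cE(T)^2$, so the linear estimate closes as $\cE(T)^2\le C\e_0^2+C\cE(T)^3$ and the smallness of $\e_0$ delivers \eqref{B17}. Uniqueness then follows by the same linear framework applied to the difference of two solutions.
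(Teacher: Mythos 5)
Your proposal is correct in outline and follows essentially the same route as the paper: anisotropic Littlewood--Paley localization with the slow/fast mode splitting to obtain the $L^1_T(\cB^{\f52,0}\cap\cB^{s_1+2,0})$ bound on $Y_t$ and the $L^2_T$ dissipation of $\p_3Y$, the elliptic equation for $q$ derived from $\na_Y\cdot Y_t=0$ with $\na_Y\cdot\p_3^2Y$ reduced to terms quadratic in $\na\p_3Y$, the divergence-form rewriting of $(\na_Y\cdot\na_Y-\D)Y_t$ via $\na\cdot\cA_Y=0$ so that the time-integrable factor is always $Y_t$, and a bootstrap on the total energy functional. Two corrective remarks: the characteristic equation is $\tau^2+|\xi|^2\tau+\xi_3^2=0$ (the roots you state are those of the correct equation), and the $L^1_T$ control of $Y_t$ on the slow mode does not come from balancing the loss $|\xi|^2/\xi_3^2$ against the $j$-weights of the $\cB^{s,0}$ norms (which carry no $k$-dependence and cannot offset it), but from the factor $\xi_3^2/|\xi|^2$ gained because $Y_t$ is the time derivative of the slowly decaying mode --- in the paper this is implemented by frequency-localized energy estimates in the two regimes $j\le\f{k+1}{2}$ and $j>\f{k+1}{2}$, the second regime using the $\p_3^2Y$ term as a source controlled in $L^1_T$ by the damped quantity $2^{2k}\|\D_j\D_k^vY\|_{L^1_T(L^2)}$.
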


\begin{rmk} The norm of $\|\cdot\|_{\cB^{s,0}}$ is  given by
Definition \ref{def2}.
 We should mention once again that the equation $\na\cdot Y=\r(Y)$
in \eqref{B12}  plays a key role in the proof of Theorem \ref{T}. In
particular, we need to use this equation to derive the globally
$L^1$ in time  estimates of $\na q$ and $\na Y_t$, which will be
crucial for us
 to close the energy estimates for \eqref{B12}-\eqref{B13}.
\end{rmk}

\no{\bf Scheme of the proof and organization of the paper.}\\

To avoid the difficulty caused by propagating  anisotropic
regularity for the  transport equation in \eqref{B4}, we shall first
prove the global wellposedness of the Lagrangian formulation
\eqref{B12}-\eqref{B13} with small initial data.

Let $(Y,q)$ be a  smooth enough solution of \eqref{B12}, applying
standard energy estimate to \eqref{B12} leads to \beq \label{B18}
\begin{split}
&\f{d}{dt}\Bigl\{\f{1}{2}\bigl(\|Y_t\|_{\dot{H}^s}^2+\|Y_t\|_{\dot{H}^{s+1}}^2+\|\p_3Y\|_{\dot{H}^s}^2
+\|\p_3Y\|_{\dot{H}^{s+1}}^2+\f{1}{4}\|Y\|_{\dot{H}^{s+2}}^2\bigr)\\
&\quad
-\f{1}{4}(Y_t\ |\ \Delta Y)_{\dot{H}^s}\Bigr\}+\f{3}{4}\|Y_t\|_{\dot{H}^{s+1}}^2+\|Y_t\|_{\dot{H}^{s+2}}^2+\f{1}{4}\|\p_3Y\|_{\dot{H}^{s+1}}^2\\
&=\bigl(\vv f\ |\ Y_t-\f{1}{4}\Delta Y-\D Y_t\bigr)_{\dot{H}^s}.
\end{split}
\eeq where $(a\ |\ b)_{\dot{H}^s}$ denotes the standard $\dot{H}^s$
inner product of $a$ and $b.$ \eqref{B18} shows that  $\p_3 Y$
belongs to  $L^2(\R^+;\dot{H}^{s+1}(\R^3)),$ however, there is no
time dissipative estimate of $\D Y.$ Therefore, in order to close
the energy estimate in \eqref{B18}, we would require the source term
$\vv f$ in \eqref{B12} belonging to $L^1(\R^+;\dH^s(\R^3)).$ To
achieve this, we  need also the
$L^1(\R^+;\cB^{\f{5}{2},0}\cap\cB^{s_1+2,0}(\R^3))$ estimate of
$Y_t.$  Toward this, we shall use the dissipative estimates for
$\p_3 Y$ as well as  the fact that $\na\cdot Y=\r(Y)$ in a rather
crucial way.

In the first part of Section 3, we shall present a heuristic
analysis to the linearized system of \eqref{B12}-\eqref{B13}, which
motivates us to use anisotropic Littlewood-Paley theory below,  then
we shall collect some basic facts on functional framework and
Littlewood-Paley analysis in Subsection 3.2. \par

In Section 4, we apply anisotropic Littlewood-Paley theory to
explore the dissipative mechanism for  a linearized model of
\eqref{B12}-\eqref{B13}.\par

 In Section 5, we present the proof of
Theorem \ref{T},  and we present the proof of  Theorems \ref{th1}
and \ref{th2} in Section 6.\par

 Finally, we present the proofs of
some technical lemmas in the Appendices.

 \setcounter{equation}{0}
\section{Preliminary}\label{sect3}
\subsection{Spectral analysis to the linearized system of \eqref{B12}-\eqref{B13}} \label{sect3.1}
 We first investigate heuristically  the spectrum properties to
the following linearized system of \eqref{B12}-\eqref{B13}:
 \beq\label{B19}\left\{\begin{aligned}
&Y_{tt}-\Delta Y_t-\p_3^2 Y=\vv f,\\
&Y|_{t=0}=Y_0,\quad Y_t|_{t=0}=Y_1.
\end{aligned}\right.\eeq
Note that the symbolic equation corresponds to \eqref{B19} reads
\beno \la^2+|\xi|^2\la+\xi_3^2=0\quad\mbox{for}\quad
\xi=(\xi_h,\xi_3)\quad\mbox{and}\quad \xi_h=(\xi_1,\xi_2). \eeno It
is easy to calculate that this equation has two different
eigenvalues \beq\label{C1} \la_\pm =-\f{|\xi|^2\pm
\sqrt{|\xi|^4-4\xi_3^2}}{2}. \eeq The Fourier modes correspond to
$\la_+$ decays like $e^{-t|\xi|^2}$.  Whereas the decay property of
the Fourier modes corresponding to $\la_-$  varies with directions
of $\xi$ as \beq\label{C2}
\la_-(\xi)=-\f{2\xi_3^2}{|\xi|^2\bigl(1+\sqrt{1-\f{4\xi_3^2}{|\xi|^4}}\bigr)}
\to -1\quad \mbox{as}\quad |\xi|\to \infty \eeq only in the $\xi_3$
direction. This shows that smooth solution of \eqref{B19} decays in
a very subtle way. In order to capture this delicate decay property
for the linear equation \eqref{B19}, we shall  decompose our
frequency space into two parts: $\bigl\{ \xi=(\xi_h,\xi_3):\
|\xi|^2\leq 2|\xi_3|\ \bigr\}$ and $\bigl\{ \xi=(\xi_h,\xi_3):\
|\xi|^2> 2|\xi_3|\ \bigr\}$.

 This
heuristic  analysis shows that the dissipative properties of the
solutions to \eqref{B19} may be more complicated than that for the
linearized system of isentropic compressible Navier-Stokes system in
\cite{Da}, and this brief analysis also suggests us to employ the
tool of anisotropic Littlewood-Paley theory as in \cite{XLZMHD1} for
2-D incompressible MHD system and \cite{LZ} for a modified 3-D MHD
system, which has also been used in the study of the global
wellposedness to 3-D anisotropic incompressible Navier-Stokes
equations \cite{CDGG,CPZ1,CZ,GZ2, DI, Pa02,PZ1,Zhangt2}. One may
check Section \ref{sect4} below for the detailed rigorous analysis
corresponding to this scenario.

\subsection{Littlewood-Paley theory} \label{subsect3.2} The
proof  of Theorem \ref{T} requires a dyadic decomposition of the
Fourier variables, or the Littlewood-Paley decomposition. Let us
briefly explain how it may be built in the case $x\in\R^3$ (see e.g.
\cite{bcd}).  Let $\varphi$ and $\chi$ be smooth functions supported
in $\mathcal{C}\eqdefa \{ \tau\in\R^+,\
\frac{3}{4}\leq\tau\leq\frac{8}{3}\}$ and $\cB\eqdefa \{
\tau\in\R^+,\ \tau\leq\frac{4}{3}\}$ such that
\begin{equation*}
 \sum_{j\in\Z}\varphi(2^{-j}\tau)=1 \quad\hbox{for}\quad \tau>0\quad\mbox{and}\quad  \chi(\tau)+ \sum_{j\geq
0}\varphi(2^{-j}\tau)=1\quad\hbox{for}\quad \tau\geq 0.
\end{equation*}
For $a\in{\mathcal S}'(\R^2),$ we set \beq
\begin{split}
&\Delta_k^ha\eqdefa\cF^{-1}(\varphi(2^{-k}|\xi_h|)\widehat{a}),\qquad
S^h_ka\eqdefa\cF^{-1}(\chi(2^{-k}|\xi_h|)\widehat{a}),
\\
& \Delta_\ell^va
\eqdefa\cF^{-1}(\varphi(2^{-\ell}|\xi_3|)\widehat{a}),\qquad \
S^v_\ell a \eqdefa \cF^{-1}(\chi(2^{-\ell}|\xi_3|)\widehat{a}),
 \quad\mbox{and}\\
&\Delta_ja\eqdefa\cF^{-1}(\varphi(2^{-j}|\xi|)\widehat{a}),
 \qquad\ \ \
S_ja\eqdefa \cF^{-1}(\chi(2^{-j}|\xi|)\widehat{a}), \end{split}
\label{1.0}\eeq where $\cF a$ and $\widehat{a}$ denote the Fourier
transform of the distribution  $a.$  The dyadic operators satisfy
the property of almost orthogonality:
\begin{equation}\label{C4}
\Delta_k\Delta_j a\equiv 0 \quad\mbox{if}\quad| k-j|\geq 2
\quad\mbox{and}\quad \Delta_k( S_{j-1}a \Delta_j b) \equiv
0\quad\mbox{if}\quad| k-j|\geq 5.
\end{equation}
Similar properties hold for $\D_k^h$ and $\D_\ell^v.$

\begin{defi}\label{def1}[Definition 2.15 of \cite{bcd}]
{\sl   Let $(p,r)\in[1,+\infty]^2,$ $s\in\R$ and $u\in{\mathcal
S}_h'(\R^3),$  (see Definition 1.26 of \cite{bcd}), which means
$u\in\cS'(\R^d)$ and
$\lim_{j\to-\infty}\|\chi(2^{-j}D)u\|_{L^\infty}$ $=0,$  we set
$$
\|u\|_{\dB^s_{p,r}}\eqdefa\Big(2^{js}\|\Delta_j
u\|_{L^{p}}\Big)_{\ell ^{r}}.
$$
\begin{itemize}

\item
For $s<\frac{3}{p}$ (or $s=\frac{3}{p}$ if $r=1$), we define $
\dB^s_{p,r}(\R^3)\eqdefa \big\{u\in{\mathcal S}_h'(\R^3)\;\big|\; \|
u\|_{\dB^s_{p,r}}<\infty\big\}.$

\item
If $k\in\N$ and $\frac{3}{p}+k-1\leq s<\frac{3}{p}+k$ (or
$s=\frac{3}{p}+k$ if $r=1$), then $ \dB^s_{p,r}(\R^3)$ is defined as
the subset of distributions $u\in{\mathcal S}_h'(\R^3)$ such that
$\partial^\beta u\in \dB^{s-k}_{p,r}(\R^3)$ whenever $|\beta|=k.$
\end{itemize}

Inhomogenous Besov spaces $B^s_{p,r}(\R^3)$ can be defined similarly
(see Definition 2.68 of \cite{bcd}). For simplicity, we shall
abbreviate $\dB^s_{2,1}(\R^3)$ (resp. $B^s_{2,1}(\R^3)$) as
$\dB^s(\R^3)$ (resp. $B^s(\R^3)$) in all that follows.}
\end{defi}

\begin{rmk}\label{rmk1}
(1) It is easy to observe that
$\dot{B}_{2,2}^s(\R^3)=\dot{H}^s(\R^3)$.

(2) Let  $(p,r)\in[1,+\infty]^2$, $s\in\R$ and $u\in{\mathcal
S}'(\R^3)$. Then $u\in\dot{B}_{p,r}^s(\R^3)$  if and only if there
exists $\{c_{j,r}\}_{j\in\Z}$ such that $\|c_{j,r}\|_{\ell^r}=1$ and
\beno \|\D_ju\|_{L^p}\leq Cc_{j,r}2^{-js}\|u\|_{\dot{B}^s_{p,r}}
\quad\text{for all}\quad j\in\Z. \eeno

(3) Let $s, s_1, s_2\in\R$ with $s_1<s<s_2$ and
$u\in\dot{H}^{s_1}\cap\dot{H}^{s_2}(\R^3)$. Then $u\in
\dot{B}^s(\R^3)$, and  there holds \beq\label{C5}
\|u\|_{\dot{B}^s}\lesssim
\|u\|_{\dot{H}^{s_1}}^{\f{s_2-s}{s_2-s_1}}\|u\|_{\dot{H}^{s_2}}^{\f{s-s_1}{s_2-s_1}}
\lesssim\|u\|_{\dot{H}^{s_1}}+\|u\|_{\dot{H}^{s_2}}. \eeq
\end{rmk}

 For the convenience of the readers, we  recall the
following Bernstein type lemma from \cite{bcd, CZ, Pa02}:

\begin{lem}\label{L2} {\sl Let $\cB_{h}$ (resp.~$\cB_{v}$) be a ball
of~$\R^2$ (resp.~$\R$), and~$\cC_{h}$ (resp.~$\cC_{v}$) a ring
of~$\R^2$ (resp.~$\R$); let~$1\leq p_2\leq p_1\leq \infty$ and
~$1\leq q_2\leq q_1\leq \infty.$ Then there holds:
\smallbreak\noindent If the support of~$\wh a$ is included
in~$2^k\cB_{h}$, then
\[
\|\partial_h^\alpha a\|_{L^{p_1}_h(L^{q_1}_v)} \lesssim
2^{k\left(|\al|+2\left(\frac1{p_2}-\frac1{p_1}\right)\right)}
\|a\|_{L^{p_2}_h(L^{q_1}_v)},\quad\text{for}\quad \p_h=(\p_1,\p_2).
\]
If the support of~$\wh a$ is included in~$2^\ell\cB_{v}$, then
\[
\|\partial_3^\beta a\|_{L^{p_1}_h(L^{q_1}_v)} \lesssim
2^{\ell\left(\beta+\left(\frac1{q_2}-\frac1{q_1}\right)\right)} \|
a\|_{L^{p_1}_h(L^{q_2}_v)}.
\]
If the support of~$\wh a$ is included in~$2^k\cC_{h}$, then
\[
\|a\|_{L^{p_1}_h(L^{q_1}_v)} \lesssim 2^{-kN} \|\partial_h^N
a\|_{L^{p_1}_h(L^{q_1}_v)}.
\]
If the support of~$\wh a$ is included in~$2^\ell\cC_{v}$, then
\[
\|a\|_{L^{p_1}_h(L^{q_1}_v)} \lesssim 2^{-\ell N} \|\partial_3^N
a\|_{L^{p_1}_h(L^{q_1}_v)}.
\]}
\end{lem}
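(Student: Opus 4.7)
All four inequalities admit a unified proof by realizing $a$ as a partial convolution, in either the horizontal or the vertical variable alone, against a Schwartz function built by rescaling a Fourier-side cutoff. Once this representation is obtained, the mixed norm $L^{p_1}_h(L^{q_1}_v)$ is controlled by combining Minkowski's integral inequality in the inner variable with Young's convolution inequality in the outer one; the gain in powers of $2^k$ or $2^\ell$ comes purely from the scaling of the cutoff. The hypotheses $p_2\le p_1$ and $q_2\le q_1$ are exactly what guarantees that the relevant Young exponent $r\ge 1$ is admissible.

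\textbf{The ball-type estimates.} For the first inequality, pick $\tilde\varphi\in C_c^\infty(\R^2)$ with $\tilde\varphi\equiv 1$ on $\mathcal B_h$, and set $\tilde\varphi_k(x_h)\eqdefa 2^{2k}\tilde\varphi(2^k x_h)$, so that $\widehat{\tilde\varphi_k}\equiv 1$ on a neighborhood of $2^k\mathcal B_h$, which contains the $\xi_h$-support of $\widehat a$. Then $a = \tilde\varphi_k *_h a$ at each fixed $x_3$, whence $\partial_h^\alpha a = (\partial_h^\alpha\tilde\varphi_k)*_h a$. Minkowski in $x_3$ gives $\|\partial_h^\alpha a(x_h,\cdot)\|_{L^{q_1}_v}\le (|\partial_h^\alpha\tilde\varphi_k|*_h g)(x_h)$ with $g(y_h)\eqdefa \|a(y_h,\cdot)\|_{L^{q_1}_v}$, and Young's inequality in the horizontal variable, with exponent $r$ determined by $1+\tfrac1{p_1}=\tfrac1r+\tfrac1{p_2}$, combined with the scaling identity $\|\partial_h^\alpha\tilde\varphi_k\|_{L^r_h} = 2^{k(|\alpha|+2(1/p_2-1/p_1))}\|\partial_h^\alpha\tilde\varphi\|_{L^r_h}$, yields the claim. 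The second inequality follows from the same reasoning with horizontal and vertical roles exchanged, the dimension factor $2$ being replaced by $1$.

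\textbf{The ring-type estimates.} For the third inequality, choose $\tilde\psi\in C_c^\infty(\R^2\setminus\{0\})$ supported in a slight enlargement of $\mathcal C_h$ and identically $1$ on $\mathcal C_h$. Since $|\xi_h|$ is bounded below on the support of $\tilde\psi$, the function $\psi(\xi_h)\eqdefa \tilde\psi(\xi_h)/|\xi_h|^N$ is smooth and compactly supported, so its inverse Fourier transform $\check\psi$ is integrable. Because $\widehat a$ is supported in $2^k\mathcal C_h$, one may rewrite $\widehat a(\xi) = 2^{-kN}\psi(2^{-k}\xi_h)\,|\xi_h|^N\widehat a(\xi)$, which in physical space reads $a = 2^{-kN}\,\psi_k *_h \partial_h^N a$ with $\|\psi_k\|_{L^1_h}=\|\check\psi\|_{L^1_h}$ independent of $k$. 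Minkowski in $x_3$ followed by Young in $x_h$ with exponents $(1,p_1,p_1)$ then close the argument. The fourth inequality is proved by the one-dimensional analogue of the same construction.

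\textbf{Main subtlety.} The only delicate point throughout is that, since $L^{p_1}_h(L^{q_1}_v)$ is not a tensor product norm, Minkowski must be applied in the inner variable before Young is applied in the outer one; this ordering is precisely what makes the separate hypotheses on the pairs $(p_1,p_2)$ and $(q_1,q_2)$ sufficient for the conclusion, without any cross-relation between the horizontal and vertical exponents. Apart from this, everything reduces to the standard Bernstein machinery in one or two dimensions.
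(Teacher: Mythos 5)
The paper offers no proof of this lemma at all: it is recalled verbatim from \cite{bcd,CZ,Pa02}, and your argument is essentially the standard proof given in those references (represent $a$ as a partial convolution with a rescaled Fourier cutoff, then Young in the convolution variable and Minkowski to handle the mixed norm), with the correct exponent bookkeeping $2^{k(|\al|+2(1/p_2-1/p_1))}$ and $2^{\ell(\beta+(1/q_2-1/q_1))}$. Two steps should be stated more carefully, though neither affects the strategy. First, in the ball case you take $\tilde\varphi\equiv1$ on $\cB_h$ and then assert $\wh{\tilde\varphi_k}\equiv1$ near $2^k\cB_h$; the function you actually convolve with is $\cF^{-1}\bigl[\tilde\varphi(2^{-k}\cdot)\bigr](x_h)=2^{2k}\bigl(\cF^{-1}\tilde\varphi\bigr)(2^kx_h)$, i.e.\ the rescaled \emph{inverse Fourier transform} of the cutoff rather than the rescaled cutoff itself; the $L^r_h$ scaling, hence the stated factor, is unchanged. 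Second, and more substantively, the ring-case identity $a=2^{-kN}\psi_k*_h\partial_h^N a$ is not literally true: $|\xi_h|^N\wh a$ is the Fourier transform of the fractional derivative $|\nabla_h|^N a$, not of an $N$-th order derivative (this is a real issue when $N$ is odd). The standard repair, entirely within your scheme, is the multi-index decomposition $\tilde\psi(\xi_h)=\sum_{|\al|=N}\theta_\al(\xi_h)\,(i\xi_h)^{\al}$ with $\theta_\al(\xi_h)\eqdefa\binom{N}{\al}\frac{(-i\xi_h)^{\al}}{|\xi_h|^{2N}}\tilde\psi(\xi_h)$ smooth and supported in the enlarged ring, which gives $a=2^{-kN}\sum_{|\al|=N}\check\theta_{\al,k}*_h\partial_h^{\al}a$ and, after Minkowski and Young with $L^1_h$, the bound by $2^{-kN}\max_{|\al|=N}\|\partial_h^{\al}a\|_{L^{p_1}_h(L^{q_1}_v)}$; the vertical ring case is fixed the same way in one dimension. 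Finally, a small precision: for the second (vertical) inequality the gain from $q_2$ to $q_1$ comes from applying Young directly in the inner variable $x_3$ at each fixed $x_h$ and then taking the $L^{p_1}_h$ norm, so the blanket description ``Minkowski in the inner variable, Young in the outer'' only describes the horizontal cases.
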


In order to obtain the $L^1(\R^+; \mbox{Lip}(\R^3))$ estimate of
$Y_t$ for the linearized equation \eqref{B19}, we  recall the
following anisotropic Besov type space from \cite{LZ, XLZMHD1}:

\begin{defi}\label{def2}
{\sl  Let  $s_1,s_2\in\R$ and $u\in{\mathcal S}_h'(\R^3),$ we define
the norm
$$
\|u\|_{\cB^{s_1,s_2}}\eqdefa\sum_{j,k\in\Z^2}2^{js_1}2^{ks_2}\|\Delta_j\D_k^v
u\|_{L^{2}}.
$$
}
\end{defi}

Then we have the following three dimensional version of Lemma 3.2 in
\cite{XLZMHD1}:

\begin{lem}\label{L1}
Let $s_1,s_2,\tau_1,\tau_2\in\R,$  which satisfy
$s_1<\tau_1+\tau_2<s_2$ and $\tau_2>0.$ Let
$a\in\dot{H}^{s_1}\cap\dot{H}^{s_2}(\R^3).$ Then
$a\in\cB^{\tau_1,\tau_2}(\R^3),$ and there holds \beno
\|a\|_{\cB^{\tau_1,\tau_2}}\lesssim\|a\|_{\dot{B}^{\tau_1+\tau_2}}
\lesssim\|a\|_{\dot{H}^{s_1}}+\|a\|_{\dot{H}^{s_2}}. \eeno
\end{lem}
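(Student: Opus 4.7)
The two inequalities are essentially independent, and I would tackle them in reverse order. The second inequality $\|a\|_{\dot{B}^{\tau_1+\tau_2}} \lesssim \|a\|_{\dot{H}^{s_1}} + \|a\|_{\dot{H}^{s_2}}$ is a standard interpolation embedding. Setting $s = \tau_1+\tau_2$ and writing $\|\Delta_j a\|_{L^2} = c_j^{(i)} 2^{-j s_i} \|a\|_{\dot{H}^{s_i}}$ with $(c_j^{(i)})_j \in \ell^2$ for $i=1,2$, the plan is to split the dyadic sum at some cutoff $j_0$, using $\dot{H}^{s_1}$ on low frequencies and $\dot{H}^{s_2}$ on high frequencies. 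Cauchy--Schwarz together with $s-s_1>0$ yields
\[
\sum_{j\leq j_0} 2^{js}\|\Delta_j a\|_{L^2} \lesssim \Big(\sum_{j\leq j_0} 2^{2j(s-s_1)}\Big)^{1/2}\|a\|_{\dot{H}^{s_1}}\lesssim 2^{j_0(s-s_1)}\|a\|_{\dot{H}^{s_1}},
\]
and analogously $\sum_{j>j_0} 2^{js}\|\Delta_j a\|_{L^2}\lesssim 2^{j_0(s-s_2)}\|a\|_{\dot{H}^{s_2}}$ thanks to $s-s_2<0$. Taking $j_0=0$ already suffices.

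For the first inequality $\|a\|_{\cB^{\tau_1,\tau_2}}\lesssim \|a\|_{\dot{B}^{\tau_1+\tau_2}}$, the key observation (and the only nontrivial point in the whole lemma) is a spectral compatibility between the isotropic block $\Delta_j$ and the vertical block $\Delta_k^v$. Since $\widehat{\Delta_j a}$ is supported in the ring $|\xi|\sim 2^j$, we automatically have $|\xi_3|\leq |\xi|\lesssim 2^j$, so there exists a fixed integer $N_0$ (independent of $a$, $j$, $k$) such that $\Delta_k^v\Delta_j a \equiv 0$ whenever $k\geq j+N_0$. Combining this with the trivial bound $\|\Delta_k^v\Delta_j a\|_{L^2}\leq C\|\Delta_j a\|_{L^2}$ (boundedness of $\Delta_k^v$ on $L^2$), one can dominate the double sum by a single sum:
\[
\|a\|_{\cB^{\tau_1,\tau_2}} = \sum_{j,k\in\Z}2^{j\tau_1}2^{k\tau_2}\|\Delta_j\Delta_k^v a\|_{L^2}\lesssim \sum_{j\in\Z} 2^{j\tau_1}\|\Delta_j a\|_{L^2}\sum_{k\leq j+N_0}2^{k\tau_2}.
\]
The assumption $\tau_2>0$ is used precisely here: it makes the inner geometric series convergent and bounded by $C\, 2^{j\tau_2}$. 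Substituting back gives $\|a\|_{\cB^{\tau_1,\tau_2}}\lesssim \sum_j 2^{j(\tau_1+\tau_2)}\|\Delta_j a\|_{L^2} = \|a\|_{\dot{B}^{\tau_1+\tau_2}}$, which is the desired bound.

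The main (and essentially only) technical point to highlight is the support argument forcing $k\lesssim j$; the remaining estimates are routine dyadic bookkeeping and classical interpolation between homogeneous Sobolev spaces, both already used elsewhere in the paper.
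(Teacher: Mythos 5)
Your proposal is correct and follows essentially the same route as the paper: the support observation that $\D_j\D_k^v a$ vanishes unless $k\leq j+N_0$, the geometric series in $k$ controlled by $\tau_2>0$, and then the interpolation embedding $\dot H^{s_1}\cap\dot H^{s_2}\hookrightarrow \dB^{\tau_1+\tau_2}$. The only difference is that the paper simply invokes this last embedding (its inequality \eqref{C5}), whereas you spell out its standard proof by splitting the dyadic sum and applying Cauchy--Schwarz, which is a matter of detail rather than of method.
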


\begin{proof}
By virtue of Definition \ref{def2} and the fact: $j\geq k-N_0$ for
some fixed positive integer $N_0$ in dyadic operator $\D_j\D^v_k$,
we infer
\begin{multline*}
\|a\|_{\cB^{\tau_1,\tau_2}}=\sum_{\substack{j,k\in\Z^2\\k\leq
j+N_0}}2^{j\tau_1}2^{k\tau_2}\|\D_j\D_k^va\|_{L^2}\lesssim
\sum_{j\in\Z}2^{j\tau_1}\|\D_ja\|_{L^2}\sum_{k\leq
j+N_0}2^{k\tau_2}\\
\lesssim\sum_{j\in\Z}2^{j(\tau_1+\tau_2)}\|\D_ja\|_{L^2}\lesssim
\|a\|_{\dot{B}^{\tau_1+\tau_2}},
\end{multline*}
 which together with \eqref{C5} completes the proof of the lemma.
\end{proof}

In  order to obtain a better description of the regularizing effect
for the transport-diffusion equation, we will use Chemin-Lerner type
spaces $\widetilde{L}^{q}_T(B^s_{p,r}(\R^3))$ (see \cite{bcd} for
instance).

\begin{defi}\label{def3}
Let  $(r,q,p)\in[1,+\infty]^3$ and $T\in(0,+\infty]$.
 We define the $\wt{L}^q_T(\dot{B}^s_{p,r}(\R^3))$  and  $\wt{L}^q_T(\cB^{s_1,s_2}(\R^3))$ by
\beno
&&\|u\|_{\wt{L}^q_T(\dot{B}^s_{p,r})}\eqdefa\Bigl(\sum_{j\in\Z}2^{jrs}
\|\D_ju\|_{L^q_T(L^p)}^r\Bigr)^{\f{1}{r}},\quad
\|u\|_{\wt{L}^q_T(\cB^{s_1,s_2})}\eqdefa\sum_{j,k\in\Z^2}2^{js_1}2^{ks_2}
\|\D_j\D_k^vu\|_{L^q_T(L^2)}, \eeno with the usual change if
$r=\infty$.
\end{defi}

\begin{rmk}\label{rmk2}
The proof of Lemma \ref{L1} ensures that \beq\label{C6}
\|u\|_{\wt{L}^2_T(\cB^{\tau_1,\tau_2})}\lesssim\|u\|_{\wt{L}^2_T(\dot{B}^{\tau_1+\tau_2})}
\lesssim\|u\|_{L^2_T(\dot{H}^{s_1})}+\|u\|_{L^2(\dot{H}^{s_2})},
\eeq for $\tau_1,\tau_2$ and $s_1,s_2$  given by Lemma \ref{L1}.
\end{rmk}

We also recall the isotropic para-differential decomposition of Bony
from \cite{Bo}: let $a, b\in \cS'(\R^3),$  \beq
\label{C7}\begin{split} &ab=T(a,b)+\cR(a,b), \quad\mbox{or}\quad
ab=T(a,b)+\bar{T}(a,b)+ R(a,b), \quad\hbox{where}\\
& T(a,b)\eqdefa\sum_{j\in\Z}S_{j-1}a\Delta_jb, \quad
\bar{T}(a,b)\eqdefa T(b,a),\quad
 \cR(a,b)\eqdefa\sum_{j\in\Z}\Delta_jaS_{j+2}b, \andf\\
&R(a,b)\eqdefa\sum_{j\in\Z}\Delta_ja\tilde{\Delta}_{j}b,\quad\hbox{with}\quad
\tilde{\Delta}_{j}b\eqdefa\sum_{\ell=j-1}^{j+1}\D_\ell b.
\end{split} \eeq
 Considering the special
structure of the functions in $\cB^{s_1,s_2}(\R^3),$ we sometime use
both isentropic Bony's decomposition\eqref{C7}  and  \eqref{C7} for
the vertical variable $x_3$ simultaneously.

As an application of the above basic facts on Littlewood-Paley
theory, we present the following product laws in space
$\cB^{s_1,s_2}(\R^3)$.

\begin{lem}\label{L3}
{\sl Let $s_1,s_2,\tau_1,\tau_2\in\R,$ which satisfy  $s_1,
s_2\leq1$, $\tau_1,\tau_2\leq\f{1}{2}$ and $s_1+s_2>0$,
$\tau_1+\tau_2>0$. Then for $a\in\cB^{s_1,\tau_1}(\R^3)$ and
$b\in\cB^{s_2,\tau_2}(\R^3)$,
$ab\in\cB^{s_1+s_2-1,\tau_1+\tau_2-\f{1}{2}}(\R^3)$ and there
holds \beq\label{C9}
\|ab\|_{\cB^{s_1+s_2-1,\tau_1+\tau_2-\f{1}{2}}}\lesssim\|a\|_{\cB^{s_1,\tau_1}}\|b\|_{\cB^{s_2,\tau_2}}.
\eeq}
\end{lem}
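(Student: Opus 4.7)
The plan is to apply Bony's paraproduct decomposition simultaneously in the isotropic Littlewood--Paley variable (via $\Delta_j$, $S_j$) and in the vertical one (via $\Delta_k^v$, $S_k^v$), splitting $ab$ into a sum of nine pieces indexed by $(\alpha,\beta)\in\{T,\bar T,R\}\times\{T^v,\bar T^v,R^v\}$ — for instance $TT^v(a,b)=\sum_{\ell,m}S_{\ell-1}S_{m-1}^v a\cdot\Delta_\ell\Delta_m^v b$. Each piece is then estimated separately in the target norm by computing $\|\Delta_j\Delta_k^v(\cdot)\|_{L^2}$, with almost orthogonality (both isotropic and vertical, cf.\ \eqref{C4}) restricting the relevant indices. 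The central technical tool is the anisotropic Bernstein inequality of Lemma~\ref{L2}: for a function with Fourier support in $\{|\xi|\lesssim 2^\ell\}\cap\{|\xi_3|\lesssim 2^m\}$ one has both
$$\|f\|_{L^\infty}\lesssim 2^\ell\, 2^{m/2}\|f\|_{L^2}\andf \|f\|_{L^2}\lesssim 2^\ell\, 2^{m/2}\|f\|_{L^1},$$
where the factor $2^\ell$ reflects horizontal $\R^2$-Bernstein and $2^{m/2}$ vertical $\R$-Bernstein; this combined gain is precisely what accounts for the drop of $1$ in the isotropic index and $\f{1}{2}$ in the vertical index of the target space $\cB^{s_1+s_2-1,\tau_1+\tau_2-\f12}$.

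For the paraproduct-type pieces such as $TT^v(a,b)$, almost orthogonality forces $(\ell,m)$ into a bounded neighborhood of $(j,k)$. I would control the low-frequency factor in $L^\infty$ by splitting it into dyadic blocks and invoking anisotropic Bernstein on each, writing
$$\|S_{\ell-1}S_{m-1}^v a\|_{L^\infty}\lesssim\sum_{\ell'\le\ell-1,\,m'\le m-1}2^{\ell'(1-s_1)}2^{m'(\f12-\tau_1)}\,c_{\ell',m'}\|a\|_{\cB^{s_1,\tau_1}},$$
where $(c_{\ell',m'})\in\ell^1$ is the normalised sequence associated with $a$. The hypotheses $s_1\le 1$ and $\tau_1\le \f12$ are exactly what makes the resulting geometric series converge to the desired $2^{\ell(1-s_1)}2^{m(\f12-\tau_1)}$ behaviour. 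Coupling this with the direct Besov bound $\|\Delta_\ell\Delta_m^v b\|_{L^2}\lesssim 2^{-\ell s_2}2^{-m\tau_2}d_{\ell,m}\|b\|_{\cB^{s_2,\tau_2}}$, the exponents in $j$ and $k$ match precisely the drop required by the target norm. The three remaining paraproduct-like pieces $T\bar T^v$, $\bar TT^v$, $\bar T\bar T^v$ are handled identically after interchanging the roles of $a$ and $b$ in either the isotropic or vertical coordinate.

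For the remainder pieces, archetypal being $RR^v(a,b)=\sum_{\ell,m}\Delta_\ell\Delta_m^v a\cdot\widetilde\Delta_\ell\widetilde\Delta_m^v b$, almost orthogonality only confines the sum to $\ell\ge j-N_0$ and $m\ge k-N_0$. Here I would instead put the outer block inside and use the $L^1\to L^2$ Bernstein bound, followed by H\"older's inequality:
$$\|\Delta_j\Delta_k^v(RR^v(a,b))\|_{L^2}\lesssim 2^j 2^{k/2}\sum_{\ell\ge j-N_0,\,m\ge k-N_0}\|\Delta_\ell\Delta_m^v a\|_{L^2}\,\|\widetilde\Delta_\ell\widetilde\Delta_m^v b\|_{L^2}.$$
The positivity conditions $s_1+s_2>0$ and $\tau_1+\tau_2>0$ then guarantee geometric convergence of the resulting high-frequency sum, and reproduce the decay matching the target space. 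The two mixed pieces $TR^v$ and $RT^v$ combine both strategies: a paraproduct-style bound in one coordinate (needing $s_i\le 1$ or $\tau_i\le\f12$ there) and a remainder-style bound in the other (needing $s_1+s_2>0$ or $\tau_1+\tau_2>0$ there).

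The main obstacle is the careful bookkeeping of the two dyadic indices in the presence of the implicit restriction $k\le j+N_0$ built into the definition of $\cB^{s,\tau}$, together with the need to invoke opposite sets of hypotheses on the exponents in different coordinates for the mixed pieces. Once the nine contributions are summed, estimate \eqref{C9} follows.
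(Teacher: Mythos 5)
Your proposal is correct and follows essentially the same route as the paper: the paper defers this lemma to Lemma 3.3 of \cite{XLZMHD1}, and the method there --- the same one the paper itself uses to prove Lemma \ref{L7} --- is exactly your simultaneous isotropic/vertical Bony decomposition into nine pieces combined with the anisotropic Bernstein inequalities of Lemma \ref{L2}, with the paraproduct pieces using $s_i\leq 1$, $\tau_i\leq\f12$ and the remainder-type pieces using $s_1+s_2>0$, $\tau_1+\tau_2>0$. Two cosmetic remarks: at the endpoints $s_1=1$ or $\tau_1=\f12$ the low-frequency sum is controlled by the $\ell^1$ summability of the coefficients rather than by a convergent geometric series (your displayed bound already contains this), and there are four mixed pieces rather than two, the remaining ones being handled by exchanging the roles of $a$ and $b$ as you indicate for the paraproduct terms.
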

\begin{proof} The proof of this lemma is identical to that of Lemma
3.3 in \cite{XLZMHD1}, we omit the details here.
\end{proof}

\begin{lem}\label{L3ad}
{\sl Let $\d\in [0,\f12),$ $s_1\leq\f32-\d,$ $s_2\leq 1+\d$ and
$s_1+s_2>\f12.$ Then one has \beno
\|ab\|_{\cB^{s_1+s_2-\f32,0}}\lesssim
\|a\|_{\dB^{s_1}}\|b\|_{\cB^{s_2-\d,\d}}.\eeno}
\end{lem}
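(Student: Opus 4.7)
The plan is to reduce the claim to Lemma \ref{L3} via an embedding of the isotropic Besov space $\dB^{s_1}$ into a suitable anisotropic space $\cB^{\tau_1,\tau_2}$, and then invoke the product estimate of Lemma \ref{L3}.

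First I would record an elementary embedding: whenever $\tau_1+\tau_2=s_1$ and $\tau_2>0$, one has
$$
\|a\|_{\cB^{\tau_1,\tau_2}}\lesssim \|a\|_{\dB^{s_1}}.
$$
This is exactly the computation already carried out in the proof of Lemma \ref{L1}: the spectral support condition forces $\Delta_j\Delta_k^v a\equiv 0$ for $k>j+N_0$, so the double sum defining $\|a\|_{\cB^{\tau_1,\tau_2}}$ is restricted to $k\le j+N_0$, and the geometric series $\sum_{k\le j+N_0}2^{k\tau_2}\lesssim 2^{j\tau_2}$ (which is precisely where $\tau_2>0$ is used) collapses it to $\sum_{j}2^{j(\tau_1+\tau_2)}\|\Delta_ja\|_{L^2}=\|a\|_{\dB^{s_1}}$.

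The key choice of parameters is $\tau_1= s_1+\delta-\f12$ and $\tau_2=\f12-\delta$, which gives $\tau_1+\tau_2=s_1$ and, crucially, $\tau_2>0$ thanks to $\delta<\f12$. With this choice $a\in\cB^{\tau_1,\tau_2}$ by the embedding step, and I would apply Lemma \ref{L3} to $a\in\cB^{\tau_1,\tau_2}$ and $b\in\cB^{s_2-\delta,\delta}$. The size constraints of Lemma \ref{L3} match the hypotheses at hand line by line: $\tau_1\le 1$ is equivalent to $s_1\le\f32-\delta$, the condition $s_2-\delta\le 1$ reads $s_2\le 1+\delta$, while $\tau_2\le\f12$ and $\delta\le\f12$ are automatic, and the two sum conditions become $\tau_1+(s_2-\delta)=s_1+s_2-\f12>0$ and $\tau_2+\delta=\f12>0$. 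The output index of Lemma \ref{L3} is then $\bigl((\tau_1+s_2-\delta)-1,(\tau_2+\delta)-\f12\bigr)=\bigl(s_1+s_2-\f32,0\bigr)$, so chaining the two estimates yields the desired bound.

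I do not anticipate a substantive obstacle here: the argument is purely algebraic once the embedding is recorded, and the sharp hypotheses stated in Lemma \ref{L3ad} correspond exactly to the boundary cases in the hypotheses of Lemma \ref{L3}. The only subtlety worth flagging is that the implicit constant in the embedding $\|a\|_{\cB^{\tau_1,\tau_2}}\lesssim\|a\|_{\dB^{s_1}}$ behaves like $(\f12-\delta)^{-1}$ as $\delta\uparrow\f12$; this precisely matches the strict inequality $\delta<\f12$ in the assumption and causes no issue for any admissible $\delta$.
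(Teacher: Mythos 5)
Your proposal is correct and is essentially the paper's own argument: the paper proves Lemma \ref{L3ad} in one line by writing $\|ab\|_{\cB^{s_1+s_2-\f32,0}}\lesssim\|a\|_{\cB^{s_1-\f12+\d,\f12-\d}}\|b\|_{\cB^{s_2-\d,\d}}\lesssim\|a\|_{\dB^{s_1}}\|b\|_{\cB^{s_2-\d,\d}}$, i.e.\ exactly your embedding (the $\tau_2>0$ geometric-sum estimate from the proof of Lemma \ref{L1}) followed by Lemma \ref{L3} with the same parameter choice $(\tau_1,\tau_2)=(s_1+\d-\f12,\f12-\d)$. Your verification of the hypotheses of Lemma \ref{L3} and the remark on the constant blowing up like $(\f12-\d)^{-1}$ as $\d\uparrow\f12$ are accurate additions, but the route is the same.
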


\begin{proof} By virtue of Lemma \ref{L1} and Lemma \ref{L3}, we
have \beno \|ab\|_{\cB^{s_1+s_2-\f32,0}}\lesssim
\|a\|_{\cB^{s_1-\f12+\d,\f12-\d}}\|b\|_{\cB^{s_2-\d,\d}} \lesssim
\|a\|_{\dB^{s_1}}\|b\|_{\cB^{s_2-\d,\d}}. \eeno This completes the
proof of the lemma.
\end{proof}

\begin{rmk} It follows from Lemma \ref{L3} and Lemma \ref{L3ad} that
\beq\label{C9ag}
\begin{split}
&
\|ab\|_{\cB^{s,0}}\lesssim\|a\|_{\dot{B}^{\f{3}{2}}}\|b\|_{\cB^{s,0}},\quad\mbox{and}\quad
\|ab\|_{\cB^{s,0}}\lesssim\|a\|_{\dot{B}^{\f{5}{4}}}\|b\|_{\dot{B}^{s+\f{1}{4}}}\quad\mbox{for}\
\  -1<s\leq\,1,\\ &
\|ab\|_{\cB^{s,0}}\lesssim\|a\|_{\dot{B}^{\f{1}{2}}}\|b\|_{\dot{B}^{s+1}}\quad\mbox{for}\
\ -1<s\leq \,1. \end{split} \eeq
\end{rmk}

\begin{lem}\label{L7} {\sl  For any $s>-1$, there holds
\beq\label{C9jh} \begin{split} & \|ab\|_{\cB^{s,0}}\lesssim
\|a\|_{\dB^{\f32}}\|b\|_{\cB^{s,0}}+\|b\|_{\dB^{\f32}}\|a\|_{\cB^{s,0}},\\
& \|ab\|_{\cB^{s,0}}\lesssim
\|a\|_{\dB^{\f32}}\|b\|_{\cB^{s,0}}+\|b\|_{\cB^{1,0}}\|a\|_{\cB^{s,\f12}},
\end{split}
\eeq and \beq \label{C9bn} \|ab\|_{\cB^{s,0}}\lesssim
\|a\|_{\dB^{\f32-\d_1}}\|b\|_{\dB^{s+\d_1}}+\|b\|_{\dB^{\f32-\d_2}}\|a\|_{\dB^{s+\d_2}},
\eeq for $\d_1,\d_2\in (0,\f12).$}
\end{lem}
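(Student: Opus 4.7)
The plan is to prove all three inequalities via Bony's isotropic paraproduct decomposition
$$ab = T(a,b) + T(b,a) + R(a,b),$$
estimating $\|\Delta_j\Delta_k^v(ab)\|_{L^2}$ piece by piece and then summing $2^{js}$ over $(j,k)$ with the implicit constraint $k \leq j + N_0$ to recover the $\cB^{s,0}$ norm. The arguments parallel those of Lemma \ref{L3} and of Lemma 3.3 in \cite{XLZMHD1}, with additional care required for the three-dimensional geometry.

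For the low-high paraproduct $T(a,b) = \sum_{j'} S_{j'-1} a\,\Delta_{j'} b$, the $\Delta_j$ localization restricts the sum to $j' \sim j$. Since the vertical Fourier support of each summand is contained in $|\xi_3| \lesssim 2^{j'+C}$, the operator $\Delta_k^v$ can be absorbed, up to a harmless remainder, into an enlarged vertical cutoff $\widetilde\Delta_k^v$ acting on the $b$-factor. Using $\dB^{\f32}\hookrightarrow L^\infty$ together with Bernstein to get $\|S_{j'-1} a\|_{L^\infty} \lesssim \|a\|_{\dB^{\f32}}$, H\"older then yields
$$\|\Delta_j \Delta_k^v T(a,b)\|_{L^2} \lesssim \|a\|_{\dB^{\f32}} \sum_{|j'-j|\leq 2} \|\widetilde\Delta_k^v \Delta_{j'} b\|_{L^2},$$
so that the weighted sum produces $\|a\|_{\dB^{\f32}}\|b\|_{\cB^{s,0}}$. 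By symmetry, $T(b,a)$ gives the companion term $\|b\|_{\dB^{\f32}}\|a\|_{\cB^{s,0}}$, completing the first line of \eqref{C9jh}. For the second line, I treat $T(b,a)$ differently: horizontal Bernstein yields $\|S_{j'-1} b\|_{L^2_v(L^\infty_h)} \lesssim \|b\|_{\cB^{1,0}}$, and a vertical Bony decomposition on the $a$-factor converts $\|\Delta_{j'} a\|_{L^\infty_v(L^2_h)}$ into a sum $\sum_{\ell \leq j'+N_0} 2^{\ell/2}\|\Delta_\ell^v \Delta_{j'} a\|_{L^2}$, which after weighting with $2^{js}$ is exactly controlled by $\|a\|_{\cB^{s,\f12}}$.

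For the remainder $R(a,b) = \sum_{j' \geq j - N_0}\Delta_{j'} a\,\widetilde\Delta_{j'} b$, I use the $L^1\to L^2$ Bernstein inequality $\|\Delta_j g\|_{L^2}\lesssim 2^{3j/2}\|g\|_{L^1}$, bound both factors in $L^2$, and apply $\|\Delta_{j'} a\|_{L^2} \lesssim 2^{-3j'/2}\|a\|_{\dB^{\f32}}$. The resulting geometric sum $\sum_{j'\geq j-N_0} 2^{-(j'-j)(s+3/2)}$ converges for every $s > -\f32$, which is comfortably satisfied under $s > -1$. The estimate \eqref{C9bn} follows by the same scheme with $\|a\|_{\dB^{\f32}}$ replaced by $\|a\|_{\dB^{\f32-\delta_1}}$: Bernstein now generates an extra factor $2^{j'\delta_1}$ on $S_{j'-1} a$, which is absorbed by $\|b\|_{\dB^{s+\delta_1}}$; the constraints $\delta_i \in (0,\f12)$ preserve both the embedding $\dB^{\f32-\delta_i} \hookrightarrow L^\infty$ and the summability of the geometric series in the remainder.

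The main technical obstacle is the transfer of the vertical projection $\Delta_k^v$ across products: because the vertical frequency of $fg$ is the convolution of the vertical supports of $f$ and $g$, one must split each paraproduct piece into three regimes according to which factor carries the higher vertical frequency --- effectively inserting a vertical Bony decomposition inside the isotropic one --- and relocate $\Delta_k^v$ onto the appropriate factor in each regime. This bookkeeping, already present in two dimensions in \cite{XLZMHD1}, is what forces the particular pairing of norms on the right-hand sides of \eqref{C9jh} and \eqref{C9bn} and makes the lemma more delicate than its purely isotropic Besov-space counterpart.
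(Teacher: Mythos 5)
Your overall plan points in the right direction---the paper's own proof also combines Bony decompositions in the full variable and in the vertical variable---but the estimates you actually write down contain a genuine gap: the claim that, in the paraproduct $T(a,b)$, the cutoff $\D_k^v$ ``can be absorbed, up to a harmless remainder, into an enlarged vertical cutoff acting on the $b$-factor'' is false. The vertical frequency of $S_{j'-1}a\,\D_{j'}b$ is a convolution of the vertical frequencies of the factors, so the output can sit at vertical frequency $2^k\ll 2^{j'}$ even when both factors carry vertical frequency $\sim 2^{j'}$; consequently your displayed bound $\|\D_j\D_k^v T(a,b)\|_{L^2}\lesssim\|a\|_{\dB^{\f32}}\sum_{|j'-j|\le2}\|\wt\D_k^v\D_{j'}b\|_{L^2}$ does not hold, and the regimes in which $a$ carries the dominant vertical frequency (the terms $T\bar T^v(a,b)$ and $TR^v(a,b)$ in the paper's notation) must be estimated separately, using mixed norms such as $\|S_{j'-1}\D_{k'}^v a\|_{L^\infty_h(L^2_v)}\lesssim 2^{-\f{k'}2}\|a\|_{\cB^{1,\f12}}$ together with the embedding $\dB^{\f32}\hookrightarrow\cB^{1,\f12}$ of Lemma \ref{L1}. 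You acknowledge in your final paragraph that a vertical Bony decomposition must be inserted inside the isotropic one---which is exactly what the paper does, via the nine-term decomposition $TT^v+T\bar T^v+TR^v+\cdots+RR^v$ of \eqref{C10}---but you never carry it out, and the two estimates you do write are precisely the ones that break without it.

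The same omission invalidates your treatment of the remainder and explains your incorrect threshold. Estimating $\D_j\D_k^v R(a,b)$ by the isotropic $L^1\to L^2$ Bernstein inequality while discarding $\D_k^v$ gives a bound with no decay in $k$, so the sum over $k\le j+N_0$ needed to recover the $\cB^{s,0}$ norm diverges; the paper instead gains $2^{\f{k}2}$ from vertical Bernstein and a compensating $2^{-\f{k'}2}$ from $\|a\|_{\cB^{1,\f12}}$, producing the summable coefficients $d_{j,k}$. This vertical bookkeeping is also where the hypothesis $s>-1$ enters: the $RR^v$ term forces $s+1>0$, whereas your claimed range $s>-\f32$ is the condition for the purely isotropic product law and signals that the vertical variable has not been tracked. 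To repair the argument you would essentially have to reproduce the paper's proof: perform both decompositions simultaneously, estimate each of the nine pieces with the anisotropic Bernstein inequalities of Lemma \ref{L2}, and then deduce \eqref{C9bn} from the same computation via the embeddings $\dB^{\f32-\d}\hookrightarrow\cB^{1,\f12-\d}$ and $\dB^{s+\d}\hookrightarrow\cB^{s,\d}$.
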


\begin{proof}

We first get, by using Bony's decomposition \eqref{C7} and
\eqref{C7} for the vertical variable, that
\beq\label{C10}\begin{aligned}
ab=&\bigl(TT^v+T\bar{T}^v+TR^v+\bar{T}T^v+\bar{T}\bar{T}^v+\bar{T}R^v
+RT^v+R\bar{T}^v+RR^v\bigr)(a,b).
\end{aligned}\eeq
We shall present the detailed estimates to typical terms above.
Indeed applying Lemma \ref{L2} gives \beno
\begin{split}
\|\D_j\D_k^v(TR^v(a,b))\|_{L^2}&\lesssim
2^{\f{k}2}\sum_{{|j'-j|\leq4}\atop{k'\geq
k-N_0}}\|S_{j'-1}\D_{k'}^va\|_{L^\infty_h(L^2_v)}
\|\D_{j'}\wt{\D}_{k'}^vb\|_{L^2}\\
&\lesssim2^{\f{k}2}\sum_{{|j'-j|\leq4}\atop{k'\geq k-N_0}}d_{j',k'}
2^{-j's}2^{-\f{k'}2}\|a\|_{\cB^{1,\f12}} \|b\|_{\cB^{s,0}}\lesssim
d_{j,k}2^{-js} \|a\|_{\dB^{\f32}}\|b\|_{\cB^{s,0}},
\end{split}\eeno as
$\|S_{j'-1}\D_{k'}^va\|_{L^\infty_h(L^2_v)}\lesssim
2^{-\f{k'}2}\|a\|_{\cB^{1,\f12}}.$ Similar estimate holds for
$\D_j\D_k^v(\bar{T}R^v(a,b)).$

Along the same line, we have \beno
\begin{split}
\|\D_j\D_k^v(RR^v(a,b))\|_{L^2}&\lesssim
2^{j}2^{\f{k}{2}}\sum_{{j'\geq j-N_0}\atop{k'\geq k-N_0}}
\|\D_{j'}\D_{k'}^va\|_{L^2}\|\wt{\D}_{j'}\wt{\D}_{k'}^vb\|_{L^2}\\
&\lesssim 2^{j}2^{\f{k}{2}}\sum_{{j'\geq j-N_0}\atop{k'\geq k-N_0}}d_{j',k'}2^{-j'(s+1)}2^{-\f{k'}2}\|a\|_{\cB^{1,\f12}}\|b\|_{\cB^{s,0}}\\
&\lesssim d_{j,k}2^{-js} \|a\|_{\dB^{\f32}}\|b\|_{\cB^{s,0}}
\end{split} \eeno due to the fact:  $s+1>0.$  The estimate to the remaining terms in
\eqref{C10} is identical, and we omit the details here.

Whence thanks to \eqref{C10}, we arrive at \beno
\|\D_j\D_k^v(ab)\|_{L^2}\lesssim d_{j,k}2^{-js}\bigl(
\|a\|_{\dB^{\f32}}\|b\|_{\cB^{s,0}}+\|b\|_{\dB^{\f32}}\|a\|_{\cB^{s,0}}\bigr),
\eeno which implies the first inequality of \eqref{C9jh}. Exactly
along the same line, we can prove the second inequality of
\eqref{C9jh}. Finally  notice from Lemma \ref{L1} that
$\dB^{\f32-\d}(\R^3) \hookrightarrow \cB^{1,\f12-\d}(\R^3)$ and
$\dB^{s+\d}(\R^3)\hookrightarrow \cB^{s,\d}(\R^3)$ for $\d\in
(0,\f12),$ the proof of \eqref{C9bn} is identical to that of
\eqref{C9jh}, we omit the details here. This concludes the proof of
Lemma \ref{L7}.
\end{proof}

\setcounter{equation}{0}
\section{$L^1_T(\cB^{s+2,0})$ estimate of $Y_t$ for $s=\f12$ and $s>1$}\label{sect4}

\subsection{The estimate of $\|Y_t\|_{L^1_T(\cB^{s+2,0})}$  for the linearized system \eqref{B19}}

\begin{prop}\label{p1}{\sl  Let $Y$ be a smooth enough solution of \eqref{B19} on $[0,T]$. Then for
any $s\in\R,$ there holds \beq\label{d1}\begin{aligned}
\|Y_t\|_{\wt{L}^\infty_T(\cB^{s,0})}&+\|\p_3Y\|_{\wt{L}^\infty_T(\cB^{s,0})}+\|Y\|_{\wt{L}^\infty_T(\cB^{s+2,0})}
+\|Y_t\|_{L^1_T(\cB^{s+2,0})}\\
&+\|\p_3Y\|_{\wt{L}^2_T(\cB^{s+1,0})}\lesssim\|Y_1\|_{\cB^{s,0}}+\|\p_3
Y_0\|_{\cB^{s,0}}+\| Y_0\|_{\cB^{s+2,0}}+\|\vv
f\|_{L^1_T(\cB^{s,0})}.\end{aligned}\eeq}
\end{prop}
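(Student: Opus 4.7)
The natural approach is an anisotropic Littlewood--Paley decomposition of \eqref{B19}. Setting $Y_{j,k}:=\Delta_j\Delta_k^v Y$ and $\vv f_{j,k}:=\Delta_j\Delta_k^v \vv f$, I obtain the dyadic block equation
\[
\partial_t^2 Y_{j,k}-\Delta\partial_t Y_{j,k}-\partial_3^2 Y_{j,k}=\vv f_{j,k},
\]
with frequencies localized at $|\xi|\sim 2^j$, $|\xi_3|\sim 2^k$ and $k\leq j+N_0$. The two symbol-roots $\lambda_\pm$ from Subsection~\ref{sect3.1} are real and distinct in the non-degenerate regime $k\leq 2j-1$, with $\lambda_+\sim -2^{2j}$, $\lambda_-\sim -2^{2k-2j}$, and are complex conjugates with real part $\sim -2^{2j}/2$ in the low-frequency regime $k>2j-1$ (which forces $j=O(1)$).

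\medbreak

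For the first four norms in \eqref{d1} I would perform energy estimates on each block by testing against $\partial_t Y_{j,k}-\tfrac14\Delta Y_{j,k}-\Delta\partial_t Y_{j,k}$, mirroring the identity \eqref{B18}. Combined with Bernstein ($\|\nabla\cdot\|_{L^2}\sim 2^j\|\cdot\|_{L^2}$ on each block), this produces an inequality of the form
\[
\frac{d}{dt}\mathcal{E}_{j,k}+c\bigl(2^{2j}+2^{4j}\bigr)\|\partial_t Y_{j,k}\|_{L^2}^2+c\,2^{2j}\|\partial_3 Y_{j,k}\|_{L^2}^2\lesssim \|\vv f_{j,k}\|_{L^2}\,\mathcal{E}_{j,k}^{1/2},
\]
with $\mathcal{E}_{j,k}\sim (1+2^{2j})\bigl(\|\partial_t Y_{j,k}\|_{L^2}^2+\|\partial_3 Y_{j,k}\|_{L^2}^2\bigr)+2^{4j}\|Y_{j,k}\|_{L^2}^2$; the indefinite cross term $-\tfrac14(\partial_t Y_{j,k}\mid\Delta Y_{j,k})$ in \eqref{B18} is absorbed by Cauchy--Schwarz. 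Dividing by $\mathcal{E}_{j,k}^{1/2}$, integrating over $[0,T]$, multiplying by $2^{js}$ and summing in $(j,k)$ yields simultaneously the $\wt{L}^\infty_T(\cB^{s,0})$ bounds on $\partial_t Y$ and $\partial_3 Y$, the $\wt{L}^\infty_T(\cB^{s+2,0})$ bound on $Y$, and the $\wt{L}^2_T(\cB^{s+1,0})$ bound on $\partial_3 Y$; the quantity $\|\partial_3 Y_0\|_{\cB^{s,0}}$ on the right-hand side of \eqref{d1} enters as the initial value of $\partial_3 Y$.

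\medbreak

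The $L^1_T(\cB^{s+2,0})$ estimate on $Y_t$ is the delicate new point, since the energy identity only yields $L^2$-in-time control. Here I would write $\partial_t Y_{j,k}(t)$ explicitly via Duhamel in terms of $e^{\lambda_\pm t}$, and use $\int_0^\infty|\lambda_\pm|\,e^{-t|\mathrm{Re}\,\lambda_\pm|}\,dt\leq 1$ together with the Vi\`ete relation $\lambda_+\lambda_-=\xi_3^2$ to obtain, schematically,
\[
\|\partial_t Y_{j,k}\|_{L^1_T(L^2)}\lesssim \frac{|\lambda_+|+|\lambda_-|}{|\lambda_+-\lambda_-|}\|Y_{j,k}(0)\|_{L^2}+\frac{1}{|\lambda_+-\lambda_-|}\bigl(\|\partial_t Y_{j,k}(0)\|_{L^2}+\|\vv f_{j,k}\|_{L^1_T(L^2)}\bigr).
\]
In the non-degenerate regime the three prefactors are $O(1)$, $O(2^{-2j})$ and $O(2^{-2j})$; after multiplication by $2^{j(s+2)}$ and summation over $(j,k)$ they match exactly $\|Y_0\|_{\cB^{s+2,0}}$, $\|Y_1\|_{\cB^{s,0}}$ and $\|\vv f\|_{L^1_T(\cB^{s,0})}$ on the right of \eqref{d1}. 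The complex-eigenvalue regime $k>2j-1$ is harmless because $2^j$ is then bounded.

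\medbreak

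The main obstacle is reconciling the two spectral regimes with the anisotropic weight $2^{j(s+2)}2^{k\cdot 0}$ so that every dyadic-block contribution sums against exactly the norms appearing in \eqref{d1}. In particular, the slow-mode factor $|\lambda_-|/|\lambda_+-\lambda_-|\sim 2^{2k-4j}$ multiplying $Y_{j,k}(0)$ is summable in $(j,k)$ only because $k\leq 2j-1$ forces $2^{2k-4j}\leq 1$; this bookkeeping, rather than any new functional-analytic ingredient, is what makes the $L^1$ bound close.
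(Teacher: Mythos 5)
Your energy step, as written, does not prove the stated estimate. The multiplier $\partial_tY_{j,k}-\tfrac14\Delta Y_{j,k}-\Delta\partial_tY_{j,k}$ is the one the paper uses later for the nonlinear estimate (\eqref{B18}, Lemma~\ref{p5}), where higher norms of the data are assumed. Here it is too strong in two ways: the source term it produces, $(\vv f_{j,k}\,|\,\Delta\partial_tY_{j,k})$, is \emph{not} bounded by $\|\vv f_{j,k}\|_{L^2}\,\mathcal{E}_{j,k}^{1/2}$ (it carries $2^{2j}$ against the $2^{j}$ available in your $\mathcal{E}_{j,k}^{1/2}$), and the initial energy then contains $2^{2j}\|Y_{1,j,k}\|_{L^2}^2+2^{2j}\|\partial_3Y_{0,j,k}\|_{L^2}^2$, so after weighting by $2^{js}$ you would need $\|Y_1\|_{\cB^{s+1,0}}$ and $\|\partial_3Y_0\|_{\cB^{s+1,0}}$, which are not on the right of \eqref{d1}. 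The paper tests only with $\Delta_j\Delta_k^vY_t$ and $\tfrac14\Delta\Delta_j\Delta_k^vY$ (\eqref{d3}--\eqref{d5}); drop the $-\Delta\partial_tY_{j,k}$ piece and this part is fixable.

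The more serious gap is in the Duhamel bookkeeping for $\|Y_t\|_{L^1_T(\cB^{s+2,0})}$. First, your ``non-degenerate regime $k\le 2j-1$'' does not stay away from the double-root set $|\xi|^4=4\xi_3^2$: that set lives in $\{|\xi|\le 2\}$, and for $2^j\lesssim 1$ a block with $k\le 2j-1$ can still contain frequencies with $4\xi_3^2\ge|\xi|^4$, so $|\lambda_+-\lambda_-|$ vanishes inside blocks where you claim $O(1)$ and $O(2^{-2j})$ prefactors; there one must use the divided-difference form $\frac{e^{\lambda_+t}-e^{\lambda_-t}}{\lambda_+-\lambda_-}=t\int_0^1e^{(\theta\lambda_++(1-\theta)\lambda_-)t}\,d\theta$ to get bounds uniform across the degeneracy. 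Second, dismissing the regime $|\xi|^2\le 2|\xi_3|$ because ``$2^j$ is bounded'' is not legitimate: $2^j$ is bounded only from above, the decay rate $|\mathrm{Re}\,\lambda_\pm|=\tfrac12|\xi|^2\sim 2^{2j}$ degenerates as $j\to-\infty$, and since the constant in \eqref{d1} must be uniform in $T$ you cannot trade $L^1_T$ for $L^2_T$ or $L^\infty_T$ there. A sharp computation in that regime shows the $Y_0$-contribution to $\|\partial_tY_{j,k}\|_{L^1_T(L^2)}$ carries a factor of order $\xi_3^2/\bigl(|\xi|^2\max(|\xi_3|,|\xi|^2)\bigr)\sim 2^{k-2j}$, which is unbounded; after multiplying by $2^{j(s+2)}$ it equals $2^{js}\,2^{k}\|Y_{0,j,k}\|_{L^2}\sim 2^{js}\|\partial_3Y_{0,j,k}\|_{L^2}$, i.e.\ you must use $\lambda_+\lambda_-=\xi_3^2$ to convert $Y_0$ into $\partial_3Y_0$ and land on $\|\partial_3Y_0\|_{\cB^{s,0}}$ --- a step missing from your account, where $\|\partial_3Y_0\|_{\cB^{s,0}}$ is attributed only to the energy part. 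For comparison, the paper never writes the solution formula: it splits into $j\le\frac{k+1}{2}$ and $j>\frac{k+1}{2}$, extracts decay rates $2^{2j}$ and $2^{2k-2j}$ from the energy identity \eqref{d5}, and then upgrades the $L^1_T$ weight on $Y_t$ to $2^{2j}$ in the second case by a further energy estimate that reuses the already-established $L^1_T$ control of $2^{2k}\|\Delta_j\Delta_k^vY\|_{L^2}$ (\eqref{d9}). Your eigenvalue route can be made to work, but only with the corrections above.
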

\begin{proof}
We first get, by applying $\D_j\D_k^v$ to \eqref{B19}, that
\beq\label{d2}
\D_j\D_k^vY_{tt}-\D\D_j\D_k^vY_t-\p_3^2\D_j\D_k^vY=\D_j\D_k^v\vv f.
\eeq Taking the $L^2$ inner product of \eqref{d2} with
$\D_j\D_k^vY_t$ gives \beq\label{d3}
\f12\f{d}{dt}\Bigl(\|\D_j\D_k^vY_t\|_{L^2}^2+\|\p_3\D_j\D_k^vY\|_{L^2}^2\Bigr)+\|\na\D_j\D_k^vY_t\|_{L^2}^2
=(\D_j\D_k^v \vv f\ |\ \D_j\D_k^vY_t).\eeq While taking the $L^2$
inner product of \eqref{d2} with $\D\D_j\D_k^vY$ leads to \beno
(\D_j\D_k^vY_{tt}\ |\ \D\D_j\D_k^vY)-\f12\f{d}{dt}\|\D\D_j\D_k^v
Y\|_{L^2}^2-\|\p_3\na\D_j\D_k^vY\|_{L^2}^2=(\D_j\D_k^v \vv f\ |\
\D\D_j\D_k^vY). \eeno

Notice that \beno (\D_j\D_k^vY_{tt}\ |\
\D\D_j\D_k^vY)=\f{d}{dt}(\D_j\D_k^vY_{t}\ |\
\D\D_j\D_k^vY)-(\D_j\D_k^vY_{t}\ |\ \D\D_j\D_k^vY_t), \eeno so that
there holds \beq\label{d4}
\begin{split}
\f{d}{dt}\Bigl(\f12\|\D\D_j\D_k^v Y\|_{L^2}^2-&(\D_j\D_k^vY_{t}\ |\
\D\D_j\D_k^vY)\Bigr)\\
&-\|\na\D_j\D_k^vY_t\|_{L^2}^2+\|\p_3\na\D_j\D_k^vY\|_{L^2}^2=-(\D_j\D_k^v\vv
f\ |\ \D\D_j\D_k^vY).
\end{split}
\eeq
\eqref{d3}+$\f14$\eqref{d4} gives rise to
\beq\label{d5}
\begin{split}
\f{d}{dt}g_{j,k}^2(t)+\f34\|\na\D_j\D_k^vY_t\|_{L^2}^2&+\f14\|\p_3\na\D_j\D_k^vY\|_{L^2}^2\\
&=\bigl(\D_j\D_k^v\vv f\ |\ \D_j\D_k^vY_t-\f14\D\D_j\D_k^vY\bigr),
\end{split}
\eeq
where
 \beno\begin{split}
g_{j,k}^2(t)\eqdefa\f12\bigl(\|\D_j\D_k^vY_t(t)\|_{L^2}^2&+\|\p_3\D_j\D_k^vY(t)\|_{L^2}^2
\\
&+\f14\|\D\D_j\D_k^vY(t)\|_{L^2}^2\bigr)-\f14\bigl(\D_j\D_k^vY_t(t)\
|\ \D\D_j\D_k^vY(t)\bigr).
\end{split}
\eeno

It is easy to observe that
 \beq \label{d6}
g_{j,k}^2(t)\sim\|\D_j\D_k^vY_t(t)\|_{L^2}^2+\|\p_3\D_j\D_k^vY(t)\|_{L^2}^2+\|\D\D_j\D_k^vY(t)\|_{L^2}^2.
\eeq With \eqref{d5}, \eqref{d6}, according to the heuristic
discussions in Subsection \ref{sect3.1} and similar to that in
\cite{LZ,XLZMHD1}, we shall separate the analysis  of \eqref{d5}
into two cases: one is when $j\leq \f{k+1}2,$ and the other one is
when $j> \f{k+1}2.$

\no {\bf Case (1)}: $j\leq \f{k+1}2.$ In this case, we infer from
Lemma \ref{L2} and \eqref{d6} that \beno g_{j,k}^2(t)\sim
\|\D_j\D_k^vY_t(t)\|_{L^2}^2+\|\p_3\D_j\D_k^vY(t)\|_{L^2}^2, \eeno
and \beno
\begin{split}
\|\na\D_j\D_k^vY_t(t)&\|_{L^2}^2+\|\p_3\na\D_j\D_k^vY(t)\|_{L^2}^2\\
&\geq
c2^{2j}\bigl(\|\D_j\D_k^vY_t(t)\|_{L^2}^2+\|\p_3\D_j\D_k^vY(t)\|_{L^2}^2\bigr)
\geq c2^{2j}g_{j,k}^2(t), \end{split}\eeno from which, for any
$\e>0,$ dividing \eqref{d5} by $g_{j,k}(t)+\e,$ then taking $\e\to
0$ and integrating the resulting equation over $[0,T],$ we obtain
\beq \label{d7}
\begin{split}
\|&\D_j\D_k^vY_t\|_{L^\infty_T(L^2)}+\|\p_3\D_j\D_k^vY\|_{L^\infty_T(L^2)}+\|\D\D_j\D_k^vY\|_{L^\infty_T(L^2)}\\
&\quad+
c2^{2j}\bigl(\|\D_j\D_k^vY_t\|_{L^1_T(L^2)}+\|\p_3\D_j\D_k^vY\|_{L^1_T(L^2)}\bigr)\\
&\lesssim
\|\D_j\D_k^vY_1\|_{L^2}+\|\p_3\D_j\D_k^vY_0\|_{L^2}+\|\D_j\D_k^v\vv
f\|_{L^1_T(L^2)}.
\end{split}
\eeq

\no {\bf Case (2)}: $j> \f{k+1}2.$  Notice from Lemma \ref{L2} that
in this case, one has
 \beno g_{j,k}^2(t) \sim
\|\D_j\D_k^vY_t(t)\|_{L^2}^2+\|\D\D_j\D_k^vY(t)\|_{L^2}^2, \eeno
and
 \beno
\begin{split}
&\|\na\D_j\D_k^vY_t(t)\|_{L^2}^2+\|\p_3\na\D_j\D_k^vY(t)\|_{L^2}^2\\
&\qquad\geq
c\frac{2^{2k}}{2^{2j}}\bigl(\|\D_j\D_k^vY_t(t)\|_{L^2}^2+\|\D\D_j\D_k^vY(t)\|_{L^2}^2\bigr)
\geq c\frac{2^{2k}}{2^{2j}} g_{j,k}^2(t),
\end{split}
\eeno from which and \eqref{d5},  we deduce by a similar derivation
of \eqref{d7}
 that \beq \label{d8}
\begin{split}
\|&\D_j\D_k^vY_t\|_{L^\infty_T(L^2)}+\|\p_3\D_j\D_k^vY\|_{L^\infty_T(L^2)}+\|\D\D_j\D_k^vY\|_{L^\infty_T(L^2)}\\
&\qquad+
c\f{2^{2k}}{2^{2j}}\bigl(\|\D_j\D_k^vY_t\|_{L^1_T(L^2)}+\|\D\D_j\D_k^vY\|_{L^1_T(L^2)}\bigr)\\
&\lesssim \|\D_j\D_k^vY_1\|_{L^2}+\|\D\D_j\D_k^vY_0\|_{L^2}+
\|\D_j\D_k^v\vv f\|_{L^1_T(L^2)}.
\end{split}
\eeq

On the other hand, standard energy estimate applied to  \eqref{d2}
yields that \beno
\f12\f{d}{dt}\|\D_j\D_k^vY_t(t)\|_{L^2}^2+\|\na\D_j\D_k^vY_t(t)\|_{L^2}^2=\bigl(\p_3^2\D_j\D_k^vY+\D_j\D_k^v\vv
f\ |\ \D_j\D_k^vY_t\bigr), \eeno from which, Lemma \ref{L2} and
\eqref{d8}, we infer \beq\label{d9}
\begin{split}
\|\D_j\D_k^v&Y_t\|_{L^\infty_T(L^2)}+c2^{2j}\|\D_j\D_k^vY_t\|_{L^1_T(L^2)}\\
&\leq \|\D_j\D_k^vY_1\|_{L^2}+C\bigl(2^{2k}\|\D_j\D_k^vY\|_{L^1_T(L^2)}+\|\D_j\D_k^v\vv f\|_{L^1_T(L^2)}\bigr)\\
&\lesssim
\|\D_j\D_k^vY_1\|_{L^2}+\|\D\D_j\D_k^vY_0\|_{L^2}+\|\D_j\D_k^v\vv
f\|_{L^1_T(L^2)}\quad\mbox{for}\quad j>\f{k+1}2.
\end{split}
\eeq

Therefore according to Definitions \ref{def2} and \ref{def3}, we get,
by summing up \eqref{d7}, \eqref{d8} and \eqref{d9}, that
\beq\label{d9op}\begin{aligned}
\|Y_t\|_{\wt{L}^\infty_T(\cB^{s,0})}+\|\p_3Y\|_{\wt{L}^\infty_T(\cB^{s,0})}&+\|Y\|_{\wt{L}^\infty_T(\cB^{s+2,0})}
+\|Y_t\|_{L^1_T(\cB^{s+2,0})}\\
&\lesssim\|Y_1\|_{\cB^{s,0}}+\|\p_3 Y_0\|_{\cB^{s,0}}+\|
Y_0\|_{\cB^{s+2,0}}+\|\vv f\|_{L^1_T(\cB^{s,0})}.\end{aligned}\eeq
On the other hand, it follows from \eqref{d5} that \beno
\|\p_3\na\D_j\D_k^vY\|_{L^2_T(L^2)}\lesssim \|\D_j\D_k^v\vv
f\|_{L^1_T(L^2)}+
\|\D_j\D_k^vY_t\|_{L^\infty_T(L^2)}+\|\D\D_j\D_k^vY\|_{L^\infty_T(L^2)},
\eeno so that \beno \|\p_3Y\|_{\wt{L}^2_T(\cB^{s+1,0})}\lesssim \|\vv
f\|_{L^1_T(\cB^{s,0})}+\|Y\|_{\wt{L}^\infty_T(\cB^{s,0})}+\|Y\|_{\wt{L}^\infty_T(\cB^{s+2,0})},
\eeno which together with \eqref{d9op} concludes the proof of
\eqref{d1}.
\end{proof}

\subsection{$L^1_T(\cB^{s,0})$ estimate of $\vv f(Y,q)$ given by
\eqref{B13}}

\begin{prop}\label{p2} {\sl Let $(Y, q)$ be a smooth enough solution of \eqref{B11} (or equivalently of
 \eqref{B12}-\eqref{B13}) on $[0,T]$ with the initial data $(Y_0,Y_1)$ satisfying $\mathrm{det}\, (I+\na
Y_0)=1$ and $\na_{Y_0}\cdot Y_1=0$. If we assume moreover that
\beq\label{d10} \|\na Y\|_{L^\infty_T(\dot{B}^{\f{3}{2}})}\leq
c_0\quad\mbox{and}\quad \bigl\langle\|
Y\|_{L^\infty_T(\dot{B}^{s+\f{5}{4}})}+\|
Y\|_{L^\infty_T(\dot{B}^{s+\f{3}{2}})}\bigr\rangle_{s>1}\leq 1 \eeq
for some $c_0$ sufficiently small. Then there holds \beq
\label{d10ap} \|\na q\|_{L^1_T(\cB^{s,0})}\lesssim\|
Y_t\|_{L^2_T(\dot{B}^{\f{9}{4}})}
\|Y_t\|_{L^2_T(\dot{B}^{s+\f{1}{4}})}+\|\p_3
Y\|_{L_T^2(\dot{B}^{\f{9}{4}})}\|\p_3
Y\|_{L_T^2(\dot{B}^{s+\f{1}{4}})} \eeq if $0<s\leq 1,$ and
\beq\label{d10ah}
\begin{split} \|\na
q\|_{L^1_T(\cB^{s,0})}\lesssim&\|Y_t\|_{L^2_T(\dot{B}^{\f{5}{4}})}^2+\|
Y_t\|_{L^2_T(\dot{B}^{\f{5}{2}})}^2+\|Y_t\|_{L^2_T(\dot{B}^{s+\f{5}{4}})}^2\\
&+ \|\p_3 Y\|_{L_T^2(\dot{B}^{\f{3}{2}})}^2+ \|\p_3
Y\|_{L_T^2(\dot{B}^{\f{9}{4}})}^2+\|\p_3
Y\|_{L_T^2(\dot{B}^{s+\f{1}{4}})}^2\quad\mbox{if}\ \  s>1.
\end{split}
\eeq Here and in all that follows, $A_s=B_s+\w{C_s}_{s>s_0}$ means
$A_s=B_s$ if $s\leq s_0$ and $A_s=B_s+C_s$ if $s>s_0.$}
\end{prop}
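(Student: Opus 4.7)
My plan is to convert the pressure estimate into an elliptic problem. Taking the standard Euclidean divergence $\nabla_y\cdot$ of the first equation in \eqref{B12} and using $\nabla\cdot Y=\rho(Y)$ gives $\partial_t^2\rho(Y)-\Delta\partial_t\rho(Y)-\partial_3^2\rho(Y)=\nabla\cdot\vv f(Y,q)$. Since $\vv f(Y,q)=(\nabla_Y\cdot\nabla_Y-\Delta)Y_t-\nabla_Y q$ and $\nabla\cdot\nabla_Y q=\Delta q+\nabla\cdot[(\nabla_Y-\nabla)q]$, I obtain the elliptic relation
\beno
\Delta q=-\partial_t^2\rho(Y)+\Delta\partial_t\rho(Y)+\partial_3^2\rho(Y)+\nabla\cdot\bigl[(\nabla_Y\cdot\nabla_Y-\Delta)Y_t\bigr]-\nabla\cdot\bigl[(\nabla_Y-\nabla)q\bigr].
\eeno
Because $\mathcal{A}_Y=(I+\nabla Y)^{-1}=I-\nabla Y+O((\nabla Y)^2)$, the last term has the schematic form $\nabla\cdot[(\nabla Y)\nabla q]$ plus higher order, so under the smallness hypothesis $\|\nabla Y\|_{L^\infty_T(\dot B^{3/2})}\leq c_0$ of \eqref{d10} it can be absorbed into the left-hand side after applying $\nabla\Delta^{-1}$ and using Lemma \ref{L7}. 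It thus suffices to estimate the first four terms on the right in the appropriate $L^1_T(\cB^{\cdot,0})$ norm after the zero-order operator $\nabla\Delta^{-1}\nabla$ (or $\nabla\Delta^{-1}\partial_t^2$) has been factored out.

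\medskip

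The key structural remark is that, by \eqref{B13}, $\rho(Y)$ is a polynomial with only quadratic and cubic monomials in $\nabla Y$. Consequently $\partial_t\rho(Y)$ is a sum of terms of the form $(\nabla Y)(\nabla Y_t)$ and $(\nabla Y)^2(\nabla Y_t)$, while $\partial_t^2\rho(Y)$ produces terms $(\nabla Y_t)(\nabla Y_t)$ plus a dangerous contribution of the type $(\nabla Y)(\nabla Y_{tt})$. I would eliminate the second-time-derivative factor by substituting the momentum equation $Y_{tt}=\Delta Y_t+\partial_3^2Y+\vv f(Y,q)$, so that $\nabla\Delta^{-1}\partial_t^2\rho(Y)$ becomes, schematically, a finite combination of $(Y_t)(\nabla Y_t)$, $(\nabla Y)\,\mathcal{R}(\nabla Y_t)$, $(\nabla Y)\,\mathcal{R}(\partial_3 Y)$ and cubic remainders, where $\mathcal{R}$ denotes a homogeneous zero-order Riesz-type multiplier. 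The term $\nabla\cdot[(\nabla_Y\cdot\nabla_Y-\Delta)Y_t]$ is already trilinear of the form $(\nabla Y)^2\nabla^2 Y_t+(\nabla Y)(\nabla^2 Y)\nabla Y_t$, and $\Delta\partial_t\rho(Y),\partial_3^2\rho(Y)$ yield only $\nabla^2[(\nabla Y)(\nabla Y_t)]$ and $\partial_3^2[(\nabla Y)^2]$, both of which after $\nabla\Delta^{-1}$ become manageable bilinear pairs.

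\medskip

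With these expressions in hand, I would invoke the anisotropic product rules of Lemmas \ref{L3}, \ref{L3ad} and \ref{L7}, along with the embedding of Lemma \ref{L1}, to bound each factor in $\cB^{s,0}$. Each surviving bilinear pair either is of the form $(Y_t)(Y_t)$, $(\partial_3 Y)(\partial_3 Y)$, or carries a small factor of $\nabla Y$ bounded via $\|\nabla Y\|_{L^\infty_T(\dot B^{3/2})}\leq c_0$. Integrating the bilinear pair in time by Cauchy--Schwarz produces the product of two $L^2_T(\dot B^{\sigma})$ norms with the indices $\f{9}{4}$ and $s+\f{1}{4}$ prescribed by the particular product lemma used, yielding \eqref{d10ap} when $0<s\leq 1$. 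When $s>1$, certain products can no longer be handled by a symmetric split, so the bound must be closed by putting one factor of $Y$ in the higher-index $L^\infty_T(\dot B^{s+5/4}\cap\dot B^{s+3/2})$ norm made available by the hypothesis $\langle\cdot\rangle_{s>1}\leq 1$ in \eqref{d10}; this yields the additional squared $L^2_T$ norms displayed on the right of \eqref{d10ah}.

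\medskip

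The main obstacle is organizational: many nonlinear terms arise from the quadratic plus cubic structure of $\rho(Y)$ and from the expansion of $\mathcal{A}_Y-I$, and one must check that after eliminating $\nabla Y_{tt}$ through the momentum equation every remainder is genuinely of the form "small coefficient times a pair controlled by dissipative norms of $Y_t$ or $\partial_3Y$". The delicate split into $0<s\leq 1$ and $s>1$ is dictated by the point at which Lemmas \ref{L7} and \ref{L3ad} cease to give a symmetric bilinear bound at the required anisotropic regularity index.
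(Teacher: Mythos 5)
You take the Euclidean divergence of \eqref{B12} and use $\na\cdot Y=\r(Y)$, whereas the paper takes the Lagrangian divergence $\na_Y\cdot$ of \eqref{B11}; these produce genuinely different groupings of the same identity, and your grouping does not close. The problem is that your right-hand side contains the isolated terms $\p_3^2\r(Y)$, $\D\p_t\r(Y)$ and $\na\cdot[(\na_Y\cdot\na_Y-\D)Y_t]$. After $\na\D^{-1}$ these are, schematically, $(\p_3\na Y)(\na Y)$, $(\na^2Y)(\na Y_t)+(\na Y)(\na^2Y_t)$ and $(\na Y)(\na^2Y_t)$: in each bilinear pair one factor is $\na Y$ (or $\na^2Y$), for which there is \emph{no} time-integrable bound anywhere in the scheme — only $\p_3Y$ and $Y_t$ carry $L^2_T$/$L^1_T$ dissipation. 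Hence these pairs cannot be placed in $L^1_T(\cB^{s,0})$ and dominated by the products of $L^2_T$ norms of $Y_t$ and $\p_3Y$ claimed in \eqref{d10ap}--\eqref{d10ah}; your assertion that $\p_3^2[(\na Y)^2]$ becomes a "manageable bilinear pair" after $\na\D^{-1}$ is exactly the false step. Substituting the momentum equation into $\p_t^2\r(Y)$ does not repair this: it reintroduces $\na q$ and $c_0\|Y_t\|_{L^1_T(\cB^{s+2,0})}$-type terms (which belong to the estimate of $\vv f$ in Proposition \ref{p3}, not to the conclusion of Proposition \ref{p2}) and still leaves the $(\na Y)(\p_3\na Y)$ pairs untouched.

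The missing idea is a cancellation that your decomposition destroys. Applying $\na_Y\cdot$ to \eqref{B11} uses $\mathrm{det}(I+\na Y)=1$ and $\na_Y\cdot Y_t=0$ to kill the viscous term exactly, $\na_Y\cdot(\na_Y\cdot\na_Y Y_t)=0$, so no $(\na Y)(\na^2Y_t)$ remnant ever appears; the time-derivative term organizes into $\na\cdot(\p_t\cA_Y\,Y_t)$, a product of two dissipative factors; and, crucially, the forcing term obeys the algebraic identity \eqref{d18},
\beno
\na_Y\cdot\p_3^2Y=\na\cdot\bigl((\cA_Y-I)\p_3^2Y\bigr)+\p_3^2\r(Y)=Q(\na\p_3Y,\na\p_3Y,\na Y),
\eeno
i.e.\ every monomial carries \emph{two} factors of $\na\p_3Y$ — the terms of the form $(\na Y)(\p_3^2\na Y)$ coming from $\p_3^2\r(Y)$ cancel against those from $\na\cdot((\cA_Y-I)\p_3^2Y)$ because $\na\cdot\cA_Y=0$. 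Only with this regrouping can every contribution to $\D q$ be either absorbed (smallness of $\|\na Y\|_{L^\infty_T(\dB^{3/2})}$ times $\|\na q\|_{L^1_T(\cB^{s,0})}$) or estimated by Cauchy--Schwarz in time as in \eqref{d10ap}--\eqref{d10ah}. To fix your proof you would have to recombine your terms back into this form, at which point you recover the paper's argument.
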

\begin{proof}
By virtue of \eqref{B11}, we get, by taking $\p_t$ to $\na_Y\cdot
Y_t=0,$ that \beq\label{d11} \na_Y\cdot Y_{tt}=-\p_t\cA_Y^T\na\cdot
Y_t. \eeq Note that for $c_0$ in \eqref{d10} being so small that
\beno \|\na Y\|_{L^\infty_T(L^\infty)}\leq C\|\na
Y\|_{L^\infty_T(\dot{B}^{\f32})}\leq Cc_0\leq\f{1}{2}, \eeno
$X(t,y)$ determined by \eqref{B6} has a smooth inverse map
$X^{-1}(t,x)$ with $X(t,X^{-1}(t,x))=x$ and $ X^{-1}(t,X(t,y))=y.$
Moreover, as $\mathrm{det}\, (I+\na Y_0)=1,$ we deduce from
\eqref{B13ag} that $\mathrm{det}\, (I+\na Y)=1,$ which together with
$\na_Y\cdot Y_t=0$ ensures that \beno \na_Y\cdot(\na_Y\cdot\na_Y
Y_t)=[\na_x\cdot\D_x(Y_t\circ X^{-1}(t,x))]\circ
X(t,y)=\na_Y\cdot\na_Y(\na_Y\cdot Y_t)=0,\eeno from which and
\eqref{d11}, we get, by  taking $\na_Y\cdot$ to the first equation
of \eqref{B11}, that \beno \na_Y\cdot\na_Y q=\p_t\cA_Y^T\na\cdot
Y_t+\na_Y\cdot\p_3^2Y, \eeno  or equivalently \beq\label{d12}
\begin{split}
\D q=&-(\na_Y-\na)\cdot\na_Y q-\na\cdot(\na_Y-\na)q+\p_t\cA_Y^T\na\cdot Y_t+\na_Y\cdot\p_3^2Y\\
=&-\na\cdot\bigl((\cA_Y-I)\cA_Y^T\na
q\bigr)-\na\cdot\bigl((\cA_Y^T-I)\na q\bigr)+\na\cdot(\p_t\cA_Y
Y_t)+\na_Y\cdot\p_3^2Y, \end{split} \eeq  from which, Lemma
\ref{L2} and Definition \ref{def2}, we infer
\beq\label{d13}\begin{aligned}
\|\na q\|_{L^1_T(\cB^{s,0})}\leq&\|(\cA_Y-I)\cA_Y^T\na q\|_{L^1_T(\cB^{s,0})}+\|(\cA_Y^T-I)\na q\|_{L^1_T(\cB^{s,0})}\\
&+\|\p_t\cA_Y
Y_t\|_{L^1_T(\cB^{s,0})}+\|\na_Y\cdot\p_3^2Y\|_{L^1_T(\cB^{s-1,0})}.
\end{aligned}\eeq
Applying \eqref{C9ag} and \eqref{B8} gives for $0<s\leq1$ \beno
\begin{split}
\|(\cA_Y^T-I)\na q\|_{L^1_T(\cB^{s,0})}&+\|(\cA_Y-I)\cA_Y^T\na q\|_{L^1_T(\cB^{s,0})}\\
&\lesssim\bigl(\|\cA_Y^T-I\|_{L^\infty_T(\dot{B}^{\f{3}{2}})}
+\|(\cA_Y-I)\cA_Y^T\|_{L^\infty_T(\dot{B}^{\f{3}{2}})}\bigr)
\|\na q\|_{L^1_T(\cB^{s,0})}\\
&\lesssim\bigl(1+\|\na
Y\|_{L^\infty_T(\dot{B}^{\f{3}{2}})}\bigr)^3\|\na
Y\|_{L^\infty_T(\dot{B}^{\f{3}{2}})}\|\na q\|_{L^1_T(\cB^{s,0})},
\end{split}
\eeno and
 \beno 
\begin{split}
\|\p_t\cA_Y Y_t\|_{L^1_T(\cB^{s,0})}\lesssim
&\|\p_t\cA_Y\|_{L^2_T(\dot{B}^{\f{5}{4}})}
\|Y_t\|_{L^2_T(\dot{B}^{s+\f{1}{4}})}\\
\lesssim & \bigl(1+\|\na
Y\|_{L^\infty_T(\dot{B}^{\f{3}{2}})}\bigr)\|\na
Y_t\|_{L^2_T(\dot{B}^{\f{5}{4}})}
\|Y_t\|_{L^2_T(\dot{B}^{s+\f{1}{4}})}.
\end{split}
\eeno While thanks to \eqref{B8}, \eqref{B12} and \eqref{B13}, a
tedious yet interesting calculation shows that  \beq\label{d18}
\na_Y\cdot\p_3^2Y=\na\cdot\bigl((\cA_Y-I)\p_3^2Y\bigr)+\p_3^2\r(Y)
=Q(\na\p_3Y,\na\p_3Y,\na Y),\eeq where $Q(\na\p_3Y,\na\p_3Y,\na Y)$
is a linear combination of quadratic terms like
$\p_3\p_iY^j\p_3\p_kY^l$ and cubic terms like
$\p_pY^q\p_3\p_iY^j\p_3\p_kY^l$. Then applying \eqref{C9ag} to
\eqref{d18} ensures that for $0<s\leq2$ \beq
\label{d19}\begin{aligned}
\|\na_Y\cdot\p_3^2Y\|_{L^1_T(\cB^{s-1,0})}\lesssim &\|(I+\na
Y)\p_3\na Y\|_{L_T^2(\dot{B}^{\f{5}{4}})}\|\p_3\na
Y\|_{L_T^2(\dot{B}^{s-\f{3}{4}})}\\
\lesssim &\bigl(1+\|\na
Y\|_{L^\infty_T(\dot{B}^{\f{3}{2}})}\bigr)\|\p_3\na
Y\|_{L_T^2(\dot{B}^{\f{5}{4}})}\|\p_3
Y\|_{L_T^2(\dot{B}^{s+\f{1}{4}})}.
\end{aligned}\eeq
Thus for $0<s\leq 1,$ resuming the above estimates into \eqref{d13}
gives rise to \beno
\begin{split}
\|\na q\|_{L^1_T(\cB^{s,0})}\lesssim & \bigl(1+\|\na
Y\|_{L^\infty_T(\dot{B}^{\f{3}{2}})}\bigr)^3\Bigl(\|\na
Y\|_{L^\infty_T(\dot{B}^{\f{3}{2}})}\|\na
q\|_{L^1_T(\cB^{s,0})}\\
&+\|Y_t\|_{L^2_T(\dot{B}^{\f{9}{4}})}
\|Y_t\|_{L^2_T(\dot{B}^{s+\f{1}{4}})}+\|\p_3
Y\|_{L_T^2(\dot{B}^{\f{9}{4}})}\|\p_3
Y\|_{L_T^2(\dot{B}^{s+\f{1}{4}})}\Bigr), \end{split} \eeno  which
along with  \eqref{d10} implies (\ref{d10ap}).

On the other hand, we get, by applying \eqref{C9jh} and
\eqref{C9bn}, that
 for $s>1$ \beq\label{d16}
\begin{split}
\|(&\cA_Y^T-I)\na q\|_{L^1_T(\cB^{s,0})}+\|(\cA_Y-I)\cA_Y^T\na q\|_{L^1_T(\cB^{s,0})}\\
\lesssim&\Bigl(\|\cA_Y^T-I\|_{L^\infty_T(\dot{B}^{\f{3}{2}})}+\|(\cA_Y-I)\cA_Y^T\|_{L^\infty_T(\dot{B}^{\f{3}{2}})}\Bigr)\|\na
q\|_{L^1_T(\cB^{s,0})}\\
&+\Bigl(\|\cA_Y^T-I\|_{L^\infty_T(\dot{B}^{s+\f{1}{2}})}
+\|(\cA_Y-I)\cA_Y^T\|_{L^\infty_T(\dot{B}^{s+\f{1}{2}})}\Bigr)
\|\na q\|_{L^1_T(\cB^{1,0})}\\
\lesssim &\bigl(1+\|\na
Y\|_{L^\infty_T(\dot{B}^{\f{3}{2}})}\bigr)^3\Bigl(\|\na
Y\|_{L^\infty_T(\dot{B}^{\f{3}{2}})}\|\na q\|_{L^1_T(\cB^{s,0})}
+\|\na Y\|_{L^\infty_T(\dot{B}^{s+\f{1}{2}})}\|\na
q\|_{L^1_T(\cB^{1,0})}\Bigr),
\end{split}
\eeq and \beq\label{d17}
\begin{split}
\|\p_t\cA_Y Y_t\|_{L^1_T(\cB^{s,0})}\lesssim&
\|\p_t\cA_Y\|_{L^2_T(\dot{B}^{\f{5}{4}})}\|Y_t\|_{L^2_T(\dot{B}^{s+\f{1}{4}})}
+\|\p_t\cA_Y\|_{L^2_T(\dot{B}^{s+\f{1}{4}})}\|Y_t\|_{L^2_T(\dot{B}^{\f{5}{4}})}\\
\lesssim&\bigl(1+\|\na Y\|_{L^\infty_T(\dot{B}^{\f{3}{2}})}+\|\na
Y\|_{L^\infty_T(\dot{B}^{s+\f14})}\bigr) \Bigl(\|\na
Y_t\|_{L^2_T(\dot{B}^{\f{5}{4}})}\|Y_t\|_{L^2_T(\dot{B}^{s+\f{1}{4}})}\\
&+\bigl(\|\na Y_t\|_{L^2_T(\dot{B}^{\f{3}{2}})}+ \|\na
Y_t\|_{L^2_T(\dot{B}^{s+\f{1}{4}})}\bigr)\|Y_t\|_{L^2_T(\dB^{\f54})}\Bigr).
\end{split}
\eeq
 Moreover, applying \eqref{C9bn} to \eqref{d18} leads to
 \begin{multline*}
\|\na_Y\cdot\p_3^2Y\|_{L^1_T(\cB^{s-1,0})}\lesssim \|(I+\na
Y)\p_3\na Y\|_{L_T^2(\dot{B}^{\f{5}{4}})}\|\p_3\na
Y\|_{L_T^2(\dot{B}^{s-\f{3}{4}})}\\
 +\|(I+\na Y)\p_3\na Y\|_{L_T^2(\dot{B}^{s-\f{3}{4}})}\|\p_3\na
Y\|_{L_T^2(\dot{B}^{\f{5}{4}})}, \end{multline*} however, by
applying Bony's decomposition \eqref{C7}, one has \begin{multline*}
\|(I+\na Y)\p_3\na Y\|_{L_T^2(\dot{B}^{s-\f{3}{4}})}\lesssim
\bigl(1+\|\na Y\|_{L^\infty_T(\dB^{\f32})}\bigr)\|\p_3\na
Y\|_{L_T^2(\dot{B}^{s-\f{3}{4}})}\\
+\|\na Y\|_{L^\infty_T(\dB^{s+\f14})}\|\p_3\na
Y\|_{L_T^2(\dot{B}^{\f{1}{2}})},\end{multline*} so that for $s>2,$
we achieve \beq\label{d20} \begin{aligned}
\|\na_Y\cdot\p_3^2Y\|_{L^1_T(\cB^{s-1,0})}\lesssim &\Bigl(1+\|\na
Y\|_{L^\infty_T(\dot{B}^{\f{3}{2}})}+\|
Y\|_{L^\infty_T(\dot{B}^{s+\f{5}{4}})}\Bigr)\\
&\quad\times \Bigl(\|\p_3 Y\|_{L_T^2(\dot{B}^{\f{9}{4}})}^2+ \|\p_3
Y\|_{L_T^2(\dot{B}^{\f{3}{2}})}^2+\|\p_3
Y\|_{L_T^2(\dot{B}^{s+\f{1}{4}})}^2\Bigr).
\end{aligned}\eeq

Whence plugging \eqref{d16}, \eqref{d17}, \eqref{d19} and
\eqref{d20} into \eqref{d13}, we obtain  \beno
\begin{split}
\|\na q\|_{L^1_T(\cB^{s,0})}\lesssim &\bigl(1+\|\na
Y\|_{L^\infty_T(\dot{B}^{\f{3}{2}})}\bigr)^3\Bigl(\|\na
Y\|_{L^\infty_T(\dot{B}^{\f{3}{2}})}\|\na q\|_{L^1_T(\cB^{s,0})}\\
&+\|Y\|_{L^\infty_T(\dot{B}^{s+\f{3}{2}})}\|\na
q\|_{L^1_T(\cB^{1,0})}\Bigr) +\bigl(1+\|\na
Y\|_{L^\infty_T(\dot{B}^{\f{3}{2}})}+\|
Y\|_{L^\infty_T(\dot{B}^{s+\f{5}{4}})}\bigr)\\
& \times\Bigl(\|Y_t\|_{L^2_T(\dot{B}^{\f{5}{4}})}^2+\|
Y_t\|_{L^2_T(\dot{B}^{\f{5}{2}})}^2+\|Y_t\|_{L^2_T(\dot{B}^{s+\f{5}{4}})}^2\\
&+  \|\p_3
Y\|_{L_T^2(\dot{B}^{\f{3}{2}})}^2+\|\p_3Y\|_{L_T^2(\dot{B}^{\f{9}{4}})}^2+\|\p_3
Y\|_{L_T^2(\dot{B}^{s+\f{1}{4}})}^2\Bigr)\quad\mbox{for}\ \ s>1,
\end{split} \eeno which together with  \eqref{d10} and \eqref{d10ap} implies \eqref{d10ah}. This completes the
proof of Proposition \ref{p2}.
\end{proof}

\begin{col}\label{col2}
{\sl Under the assumption of Proposition \ref{p2}, one has
\beq\label{cc3} \|\na
q\|_{L^1_T(\dB^{\f12})}\lesssim\|Y_t\|_{L^2_T(\dB^{\f32})}^2+\|\p_3Y\|_{L^2_T(\dB^{\f32})}^2.
\eeq}
\end{col}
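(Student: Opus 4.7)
\textbf{Proof Proposal for Corollary \ref{col2}.}

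The plan is to re-run the argument of Proposition \ref{p2} using the isotropic elliptic estimate in $\dB^{1/2}(\R^3)$ together with the \emph{balanced} product law $\dB^{1/2}\cdot\dB^{3/2}\hookrightarrow\dB^{1/2}$ and $\dB^{1/2}\cdot\dB^{1/2}\hookrightarrow\dB^{-1/2}$ (both of which hold in 3D since the Besov indices satisfy the required positivity and $\leq 3/2$ conditions). Note that the corollary cannot simply be deduced from Proposition \ref{p2} with $s=\f12$: the product $\|Y_t\|_{L^2_T(\dB^{9/4})}\|Y_t\|_{L^2_T(\dB^{3/4})}$ appearing there only \emph{upper-bounds} $\|Y_t\|_{L^2_T(\dB^{3/2})}^2$ by interpolation and Cauchy--Schwarz, which is the wrong direction; so a separate argument is needed.

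Starting from the pressure equation \eqref{d12}, which reads schematically
\begin{equation*}
\D q=-\na\cdot\bigl((\cA_Y-I)\cA_Y^T\na q\bigr)-\na\cdot\bigl((\cA_Y^T-I)\na q\bigr)+\na\cdot(\p_t\cA_Y Y_t)+\na_Y\cdot\p_3^2Y,
\end{equation*}
I apply the Riesz-type multiplier $\na(-\D)^{-1}$ and take $\dB^{1/2}$ norms. The divergence-form terms are bounded by the $\dB^{1/2}$ norm of the corresponding flux, while $\na_Y\cdot\p_3^2Y$ is rewritten via \eqref{d18} as $Q(\na\p_3Y,\na\p_3Y,\na Y)$, which I then estimate in $\dB^{-1/2}$ (since its $(-\D)^{-1}\na$-Riesz transform lands in $\dB^{1/2}$).

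For the two terms linear in $\na q$, the factor $\cA_Y-I$ is at least linear in $\na Y$, so $\|(\cA_Y-I)\cA_Y^T\na q\|_{\dB^{1/2}}+\|(\cA_Y^T-I)\na q\|_{\dB^{1/2}}\lesssim\|\na Y\|_{\dB^{3/2}}\|\na q\|_{\dB^{1/2}}$, and after time integration the hypothesis $\|\na Y\|_{L^\infty_T(\dB^{3/2})}\leq c_0$ lets me absorb these into the left-hand side. For $\p_t\cA_Y Y_t$, the leading term is $\na Y_t\cdot Y_t$ (with higher-order corrections carrying extra $\na Y$ factors that are absorbable by smallness); the balanced product law yields
\begin{equation*}
\|\na Y_t\cdot Y_t\|_{\dB^{1/2}}\lesssim\|\na Y_t\|_{\dB^{1/2}}\|Y_t\|_{\dB^{3/2}}\lesssim\|Y_t\|_{\dB^{3/2}}^2,
\end{equation*}
and Cauchy--Schwarz in time gives $\|\cdot\|_{L^1_T(\dB^{1/2})}\lesssim\|Y_t\|_{L^2_T(\dB^{3/2})}^2$. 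For the $Q$-term, the quadratic contribution $\p_3\p_i Y^j\p_3\p_k Y^l$ sits in $\dB^{-1/2}$ via $\dB^{1/2}\cdot\dB^{1/2}\hookrightarrow\dB^{-1/2}$, giving $\|\p_3 Y\|_{\dB^{3/2}}^2$ pointwise in time, whence $\|\p_3 Y\|_{L^2_T(\dB^{3/2})}^2$ after integration; the cubic terms carry an extra $\na Y$ factor absorbed by $c_0$.

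The main obstacle is simply the bookkeeping: one must verify that every product law one uses is within the admissible regime ($s_1+s_2>0$, $s_1,s_2\leq 3/2$ in 3D), and that the ``higher-order'' pieces of $\cA_Y-I$ and $\p_t\cA_Y$ (which are polynomial in $\na Y$) are all controlled by powers of $\|\na Y\|_{L^\infty_T(\dB^{3/2})}\leq c_0$ and therefore absorbable. No new analytic idea is required beyond the one already used in Proposition \ref{p2}; the balanced $\dB^{3/2}$-$\dB^{1/2}$ split is what converts the mixed $\dB^{9/4}\cdot\dB^{3/4}$-type estimate of Proposition \ref{p2} into the clean $\dB^{3/2}$-squared estimate \eqref{cc3}.
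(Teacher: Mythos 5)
Your proposal is correct and follows essentially the same route as the paper: starting from the elliptic equation \eqref{d12}, estimating the two $\na q$-flux terms in $\dB^{\f12}$ and absorbing them via the smallness of $\|\na Y\|_{L^\infty_T(\dB^{\f32})}$, bounding $\p_t\cA_Y Y_t$ through the product law $\dB^{\f12}\cdot\dB^{\f32}\hookrightarrow\dB^{\f12}$ plus Cauchy--Schwarz in time, and treating $\na_Y\cdot\p_3^2Y$ via \eqref{d18} in $\dB^{-\f12}$ with $\dB^{\f12}\cdot\dB^{\f12}\hookrightarrow\dB^{-\f12}$. Your preliminary remark that the corollary is not a specialization of Proposition \ref{p2} at $s=\f12$ is also accurate, and is exactly why the paper gives this separate argument.
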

\begin{proof}
We first deduce from \eqref{d12} that \beq\label{cc4}\begin{aligned}
\|\na q\|_{L^1_T(\dB^{\f12})}\leq&\|(\cA_Y-I)\cA_Y^T\na q\|_{L^1_T(\dB^{\f12})}+\|(\cA_Y^T-I)\na q\|_{L^1_T(\dB^{\f12})}\\
&+\|\p_t\cA_Y
Y_t\|_{L^1_T(\dB^{\f12})}+\|\na_Y\cdot\p_3^2Y\|_{L^1_T(\dB^{-\f12})}.
\end{aligned}\eeq
It follows from product laws in Besov space (see \cite{bcd} for
instance) that
\begin{multline*}
\|(\cA_Y-I)\cA_Y^T\na q\|_{L^1_T(\dB^{\f12})}+\|(\cA_Y^T-I)\na q\|_{L^1_T(\dB^{\f12})}\\
\lesssim\bigl(1+\|\na Y\|_{L^\infty_T(\dB^{\f{3}{2}})}\bigr)^3\|\na
Y\|_{L^\infty_T(\dB^{\f{3}{2}})}\|\na q\|_{L^1_T(\dB^{\f12})},
\end{multline*}
and \beno \|\p_t\cA_Y
Y_t\|_{L^1_T(\dB^{\f12})}\lesssim\|\p_t\cA_Y\|_{L^2_T(\dB^{\f12})}\|Y_t\|_{L^2_T(\dB^{\f32})}
\lesssim\bigl(1+\|\na
Y\|_{L^\infty_T(\dB^{\f{3}{2}})}\bigr)\|Y_t\|_{L^2_T(\dB^{\f32})}^2.
\eeno Along the same line, we deduce from \eqref{d18} that
\begin{multline*}
\|\na_Y\cdot\p_3^2Y\|_{L^1_T(\dB^{-\f12})}\lesssim\|Q(\na\p_3Y,\na\p_3Y,\na Y)\|_{L^1_T(\dB^{-\f12})}\\
\lesssim\|(I+\na
Y)\na\p_3Y\|_{L^2_T(\dB^{\f12})}\|\na\p_3Y\|_{L^2_T(\dB^{\f12})}
\lesssim\bigl(1+\|\na
Y\|_{L^\infty_T(\dB^{\f{3}{2}})}\bigr)\|\p_3Y\|_{L^2_T(\dB^{\f32})}^2.
\end{multline*}
Resuming the above estimates into \eqref{cc4} leads to \eqref{cc3}.
\end{proof}

\begin{prop}\label{p3} {\sl Let $s>1$, and $\vv f(Y,q)$ be given by \eqref{B13}.
Then under the assumptions of Proposition \ref{p2} and
\beq\label{A2}
\|Y\|_{L^\infty_T(\cB^{s+2,0})}\leq1,
\eeq
 one has
\beq\label{d27}\begin{aligned} &\|\vv
f(Y,q)\|_{L^1_T(\cB^{\f{1}{2},0})}+\|\vv
f(Y,q)\|_{L^1_T(\cB^{s,0})}\lesssim\|\na q\|_{L^1_T(\cB^{\f12,0})}
+\|\na Y\|_{L^\infty_T(\dB^{\f{3}{2}})}\|Y_t\|_{L^1_T(\cB^{s+2,0})}\\
&\ +\|\na q\|_{L^1_T(\cB^{s,0})}
+\|Y_t\|_{L^1_T(\dB^{\f{5}{2}})}\Bigl(\|Y\|_{L^\infty_T(\cB^{s+2,0})}+\|\na
Y\|_{L^\infty_T(\dB^{\f{3}{2}})}\Bigr).
\end{aligned}\eeq}
\end{prop}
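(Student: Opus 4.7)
The term $\vv f(Y,q)=(\na_Y\cdot\na_Y-\Delta)Y_t-\na_Y q$ decomposes naturally into a diffusion-type piece and a pressure piece. Using the divergence-free property $\sum_i\partial_i b_{ij}=0$ recorded after \eqref{B8}, the diffusion piece can be recast in conservation form
\[
(\na_Y\cdot\na_Y-\Delta)Y_t \;=\; \na\cdot\bigl((\cA_Y\cA_Y^T-I)\na Y_t\bigr),
\]
while the pressure piece reads $\na_Y q=\na q+(\cA_Y^T-I)\na q$. The $\na q$ contributions land directly on the right-hand side of \eqref{d27}, so the task reduces to estimating the bilinear quantities $(\cA_Y\cA_Y^T-I)\na Y_t$ in $\cB^{s'+1,0}$ (after Bernstein, Lemma~\ref{L2}) and $(\cA_Y^T-I)\na q$ in $\cB^{s',0}$, for both $s'=\tfrac{1}{2}$ and $s'=s$. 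From \eqref{B8}, $\cA_Y-I$ and $\cA_Y\cA_Y^T-I$ are finite polynomials in $\na Y$ of degree one or two, so after one application of a product law each factor is controlled either by $\|\na Y\|_{L^\infty_T(\dB^{3/2})}$ (small by \eqref{d10}) or by $\|Y\|_{L^\infty_T(\cB^{s+2,0})}$ (bounded by \eqref{A2}).

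\textbf{The low regularity case $s'=1/2$.} Since $1/2\in(-1,1]$, I would invoke the first line of \eqref{C9ag} to write
\[
\|(\cA_Y^T-I)\na q\|_{\cB^{1/2,0}}\lesssim \|\na Y\|_{\dB^{3/2}}\|\na q\|_{\cB^{1/2,0}},\qquad \|(\cA_Y\cA_Y^T-I)\na Y_t\|_{\cB^{3/2,0}}\lesssim \|\na Y\|_{\dB^{3/2}}\|\na Y_t\|_{\cB^{3/2,0}},
\]
the quadratic $\na Y\otimes\na Y$ piece of $\cA_Y\cA_Y^T-I$ being absorbed by smallness of $\|\na Y\|_{L^\infty_T(\dB^{3/2})}$. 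Bernstein converts $\|\na Y_t\|_{\cB^{3/2,0}}$ to $\|Y_t\|_{\cB^{5/2,0}}\le \|Y_t\|_{\cB^{s+2,0}}$, and integration in time delivers the $\|\na q\|_{L^1_T(\cB^{1/2,0})}$ term together with a piece of $\|\na Y\|_{L^\infty_T(\dB^{3/2})}\|Y_t\|_{L^1_T(\cB^{s+2,0})}$ on the right-hand side of \eqref{d27}.

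\textbf{The high regularity case $s'=s>1$.} Here \eqref{C9ag} no longer applies; I invoke instead the symmetric splitting \eqref{C9jh} of Lemma~\ref{L7}, which gives
\[
\|(\cA_Y\cA_Y^T-I)\na Y_t\|_{\cB^{s+1,0}}\lesssim \|\na Y\|_{\dB^{3/2}}\|\na Y_t\|_{\cB^{s+1,0}}+\|\na Y_t\|_{\dB^{3/2}}\|\na Y\|_{\cB^{s+1,0}}.
\]
Using Bernstein ($\|\na Y\|_{\cB^{s+1,0}}\lesssim\|Y\|_{\cB^{s+2,0}}$ and $\|\na Y_t\|_{\dB^{3/2}}\lesssim\|Y_t\|_{\dB^{5/2}}$) and integrating in time produces exactly the pair $\|\na Y\|_{L^\infty_T(\dB^{3/2})}\|Y_t\|_{L^1_T(\cB^{s+2,0})}$ and $\|Y_t\|_{L^1_T(\dB^{5/2})}\|Y\|_{L^\infty_T(\cB^{s+2,0})}$ in \eqref{d27}. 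The bilinear estimate for $(\cA_Y^T-I)\na q$ is structurally identical, producing $\|\na Y\|_{L^\infty_T(\dB^{3/2})}\|\na q\|_{L^1_T(\cB^{s,0})}$ together with a cross term of the form $\|Y_t\|_{L^1_T(\dB^{5/2})}\|\na Y\|_{L^\infty_T(\dB^{3/2})}$, where the appearance of the $Y_t$ factor comes from controlling $\|\na q\|_{L^1_T(\dB^{3/2})}$ via Corollary~\ref{col2}.

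\textbf{Main obstacle.} The delicate point is the high-regularity case: one must avoid generating a term like $\|Y_t\|_{L^1_T(\cB^{s+2,0})}\|Y\|_{L^\infty_T(\cB^{s+2,0})}$, which is \emph{not} small and would prevent closure of the subsequent energy estimate in Section~5. The symmetric splitting of Lemma~\ref{L7} at the borderline exponent $\dB^{3/2}$ is precisely what allows the high-regularity factor to sit on $\cA_Y-I$ (i.e.\ on $Y$, bounded through \eqref{A2}) at the cost of only $\|Y_t\|_{L^1_T(\dB^{5/2})}$, which is controllable from the linear dissipative output of Proposition~\ref{p1}. With these choices in place, the collection of all contributions yields \eqref{d27} after the small prefactors $\|\na Y\|_{L^\infty_T(\dB^{3/2})}$ are absorbed via \eqref{d10}.
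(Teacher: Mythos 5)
Your overall architecture is the same as the paper's: split $\vv f=\bar{\vv f}+\tilde{\vv f}$ with $\bar{\vv f}=(\na_Y\cdot\na_Y-\D)Y_t$ and $\tilde{\vv f}=-\na_Yq$, put $\bar{\vv f}$ in divergence form via $\na\cdot\cA_Y=0$ (your $\cA_Y\cA_Y^T-I$ is exactly the paper's $(\cA_Y-I)\cA_Y^T+(\cA_Y^T-I)$ in \eqref{d31}), use the one-sided law \eqref{C9ag} at low regularity and the symmetric law \eqref{C9jh} at high regularity, and you correctly identify that the whole point is never to pair $\|Y_t\|_{L^1_T(\cB^{s+2,0})}$ with $\|Y\|_{L^\infty_T(\cB^{s+2,0})}$. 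However, two of your individual steps do not go through as written. First, in the low-regularity case the product for $\bar{\vv f}$ must be estimated in $\cB^{3/2,0}$ (after peeling off the divergence), and the first inequality of \eqref{C9ag} is only valid for indices in $(-1,1]$; the one-sided bound $\|(\cA_Y\cA_Y^T-I)\na Y_t\|_{\cB^{3/2,0}}\lesssim\|\na Y\|_{\dB^{3/2}}\|\na Y_t\|_{\cB^{3/2,0}}$ is not among the paper's product laws and fails in this one-sided form above index $1$: when $\na Y$ carries the high isotropic frequencies, the low frequencies of $\na Y_t$ cost a positive power of the frequency ratio, which is precisely why Lemma \ref{L7} carries the symmetric second term. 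The paper therefore runs the whole $\bar{\vv f}$ estimate through \eqref{C9jh} (at every regularity), accepting the extra term $\|Y_t\|_{L^1_T(\dB^{5/2})}\|\na Y\|_{L^\infty_T(\cB^{s+1,0})}$, and this is exactly where the factor $\|Y\|_{L^\infty_T(\cB^{s+2,0})}$ on the right of \eqref{d27} originates. Relatedly, your reduction $\|Y_t\|_{\cB^{5/2,0}}\le\|Y_t\|_{\cB^{s+2,0}}$ is false: these norms are homogeneous in the isotropic index and not nested, and only an interpolation of the type \eqref{C14}, which also needs a lower-index norm, is available.

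Second, your treatment of the high-regularity pressure cross term is off: Corollary \ref{col2} controls $\|\na q\|_{L^1_T(\dB^{1/2})}$, not $\|\na q\|_{L^1_T(\dB^{3/2})}$, and no $Y_t$-factor should be generated from $\tilde{\vv f}$ at all. The paper's route (see \eqref{d24}) is to apply the second inequality of \eqref{C9jh}, producing $\|\na Y\|_{L^\infty_T(\dB^{s+1/2})}\|\na q\|_{L^1_T(\cB^{1,0})}$; the first factor is bounded by $\|Y\|_{L^\infty_T(\dB^{s+3/2})}\le 1$ from \eqref{d10}, and the second is absorbed into $\|\na q\|_{L^1_T(\cB^{1/2,0})}+\|\na q\|_{L^1_T(\cB^{s,0})}$ by \eqref{C14}, so the pressure contributions appear in \eqref{d27} with constant coefficient. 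Both of your missteps are repairable inside your own framework (replace \eqref{C9ag} by \eqref{C9jh} for the diffusion piece, and use \eqref{C9jh} plus \eqref{C14} for the pressure piece), but as written these two steps are where the proof would break.
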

\begin{proof} Thanks to \eqref{B13},
 we split $\vv f(Y,q)$ as follows:
\beq\label{d22}\begin{aligned}
&\vv f(Y,q)=\bar{\vv f}(Y)+\tilde{\vv f}(Y,q),\quad\text{with}\\
&\bar{\vv
f}(Y)\eqdefa(\na_Y\cdot\na_Y-\Delta)Y_t\quad\mbox{and}\quad\tilde{\vv
f}(Y,q)\eqdefa-\na_Yq.
\end{aligned}\eeq
As $\tilde{\vv f}(Y,q)=-(\cA^T-I)\na q-\na q,$ by virtue of
\eqref{B8}, we deduce from \eqref{C9ag} and  \eqref{d10} that
\beq\label{d23}\begin{aligned}
\|\tilde{\vv f}(Y,q)\|_{L^1_T(\cB^{\f{1}{2},0})}&\lesssim\bigl(1+\|\cA^T-I\|_{L^\infty_T(\dot{B}^{\f{3}{2}})}\bigr)\|\na q\|_{L^1_T(\cB^{\f12,0})}\\
&\lesssim\bigl(1+\|\na
Y\|_{L^\infty_T(\dot{B}^{\f{3}{2}})}\bigr)^2\|\na
q\|_{L^1_T(\cB^{\f12,0})}\lesssim\|\na q\|_{L^1_T(\cB^{\f12,0})},
\end{aligned}\eeq
and for $s>1$, we infer from \eqref{C9jh} and \eqref{d10} that
\beq\label{d24}\begin{aligned} \|\tilde{\vv
f}(Y,q)\|_{L^1_T(\cB^{s,0})}\lesssim
&\|\cA^T-I\|_{L^\infty_T(\dot{B}^{\f{3}{2}})}\|\na
q\|_{L^1_T(\cB^{s,0})}\\
&+\|\cA^T-I\|_{L^\infty_T(\dot{B}^{s+\f{1}{2}})}\|\na q\|_{L^1_T(\cB^{1,0})}+\|\na q\|_{L^1_T(\cB^{s,0})}\\
\lesssim &\bigl(1+\|\na
Y\|_{L^\infty_T(\dot{B}^{\f{3}{2}})}\bigr)\Bigl(\|\na
Y\|_{L^\infty_T(\dot{B}^{\f{3}{2}})}\|\na q\|_{L^1_T(\cB^{s,0})}\\
&+\|\na Y\|_{L^\infty_T(\dot{B}^{s+\f{1}{2}})}\|\na q\|_{L^1_T(\cB^{1,0})}\Bigr)+\|\na q\|_{L^1_T(\cB^{s,0})}\\
\lesssim&\|\na q\|_{L^1_T(\cB^{\f12,0})}+\|\na
q\|_{L^1_T(\cB^{s,0})},
\end{aligned}\eeq
where in the last step,   we used  the trivial fact that
\beq\label{C14} \|a\|_{\cB^{s,\d}}\lesssim
\|a\|_{\cB^{s_1,\d}}+\|a\|_{\cB^{s_2,\d}}\quad\mbox{for any}\quad
s\in [s_1,s_2]. \eeq

On the other hand, notice that \beq\label{d31}\begin{aligned}
\bar{\vv f}^i(Y)&=(\na_Y-\na)\cdot\na_YY_t^i+\na\cdot(\na_Y-\na)Y_t^i\\
&=\na\cdot\bigl[(\cA_Y-I)\cA_Y^T\na Y_t^i+(\cA_Y^T-I)\na
Y_t^i\bigr],
\end{aligned}\eeq
which leads to \beq\label{d25} \|\bar{\vv f}(Y)\|_{L^1_T(\cB^{s,0})}
\lesssim\|(\cA_Y-I)\cA_Y^T\na
Y_t\|_{L^1_T(\cB^{s+1,0})}+\|(\cA_Y^T-I)\na
Y_t\|_{L^1_T(\cB^{s+1,0})}. \eeq Whereas according to  \eqref{B8},
we get, by applying \eqref{C9jh}, that \beno\begin{aligned}
\|(\cA_Y^T&-I)\na Y_t\|_{L^1_T(\cB^{s+1,0})}\\
\lesssim
&\|\cA_Y^T-I\|_{L^\infty_T(\dB^{\f{3}{2}})}\|\na
Y_t\|_{L^1_T(\cB^{s+1,0})}
+\|\cA_Y^T-I\|_{L^\infty_T(\cB^{s+1,0})}\|\na
Y_t\|_{L^1_T(\dB^{\f{3}{2}})} \\
\lesssim& \bigl(1+\|\na
Y\|_{L^\infty_T(\dB^{\f{3}{2}})}\bigr)\Bigl(\|\na
Y\|_{L^\infty_T(\dB^{\f{3}{2}})}\|Y_t\|_{L^1_T(\cB^{s+2,0})}+\|Y_t\|_{L^1_T(\dB^{\f{5}{2}})}\|\na
Y\|_{L^\infty_T(\cB^{s+1,0})} \Bigr).
\end{aligned}\eeno
Along the same line, and thanks to \eqref{d10} and \eqref{A2}, we
have \beno\begin{aligned} \|(\cA_Y-I)&\cA_Y^T\na
Y_t\|_{L^1_T(\cB^{s+1,0})}\lesssim \|\na
Y\|_{L^\infty_T(\dB^{\f{3}{2}})}\|Y_t\|_{L^1_T(\cB^{s+2,0})}\\
&\qquad+\|\na Y_t\|_{L^1_T(\dB^{\f32})}\Bigl(\|\na
Y\|_{L^\infty_T(\dB^{\f{3}{2}})}+\|\na
Y\|_{L^\infty_T(\cB^{s+1,0})}\Bigr).
\end{aligned}\eeno
Resuming the above two estimates into \eqref{d25} gives rise to
\beno
\begin{aligned} \|\bar{\vv
f}(Y)\|_{L^1_T(\cB^{s,0})}\lesssim & \|\na
Y\|_{L^\infty_T(\dB^{\f{3}{2}})}\|Y_t\|_{L^1_T(\cB^{s+2,0})}\\
&+\| Y_t\|_{L^1_T(\dB^{\f52})}\Bigl(\|\na
Y\|_{L^\infty_T(\dB^{\f{3}{2}})}+\|
Y\|_{L^\infty_T(\cB^{s+2,0})}\Bigr),
\end{aligned}\eeno
which together with \eqref{d23} and  \eqref{d24} ensures
\eqref{d27}. This concludes the proof of Proposition \ref{p3}.
\end{proof}

\subsection{$L^1_T(\cB^{s+2,0}(\R^3))$ estimate of $Y_t$ for $s=\f12$ and $s>1$}
\begin{prop}\label{p4}{\it
Let $s>1$, then under assumptions of Proposition \ref{p2} and
\beq\label{A3} \|Y\|_{L^\infty_T(\cB^{s+2,0})}\leq c_0, \eeq for
some $c_0$ sufficiently small, we have
\beq\label{d29}\begin{aligned}
\|Y_t&\|_{\wt{L}^\infty_T(\cB^{\f12,0}\cap\cB^{s,0})}
+\|\p_3Y\|_{\wt{L}^\infty_T(\cB^{\f12,0}\cap\cB^{s,0})}+\|Y\|_{\wt{L}^\infty_T(\cB^{\f52,0}\cap\cB^{s+2,0})}\\
&\quad+\|Y_t\|_{L^1_T(\cB^{\f52,0}\cap\cB^{s+2,0})}+\|\p_3Y\|_{\wt{L}^2_T(\cB^{\f32,0}\cap\cB^{s+1,0})}\\
&\lesssim\|Y_1\|_{\cB^{\f12,0}\cap\cB^{s,0}}+\|\p_3
Y_0\|_{\cB^{\f12,0}\cap\cB^{s,0}}+\|
Y_0\|_{\cB^{\f52,0}\cap\cB^{s+2,0}}+\|Y_t\|_{L^2_T(\dot{B}^{\f{3}{4}})}^2\\
&\quad+\|
Y_t\|_{L^2_T(\dot{B}^{\f{5}{2}})}^2+\|Y_t\|_{L^2_T(\dot{B}^{s+\f{5}{4}})}^2+
\|\p_3Y\|_{L_T^2(\dot{B}^{\f{3}{4}})}^2+
\|\p_3Y\|_{L_T^2(\dot{B}^{\f{9}{4}})}^2\\
&\quad+\|\p_3
Y\|_{L_T^2(\dot{B}^{s+\f{1}{4}})}^2+\|Y\|_{L^\infty_T(\cB^{s+2,0})}^2.\end{aligned}\eeq
}
\end{prop}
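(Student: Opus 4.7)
The plan is to assemble three ingredients already at hand: the linear estimate of Proposition \ref{p1}, the source-term estimate of Proposition \ref{p3}, and the pressure estimate of Proposition \ref{p2} together with Corollary \ref{col2}, each applied at the two regularity indices $s=1/2$ and the given $s>1$. The left-hand side of \eqref{d29} is precisely what emerges from adding the conclusions of Proposition \ref{p1} at these two indices, so the task reduces to estimating the source $\vv f(Y,q)$ on the right and then using the smallness hypotheses to absorb any remaining copies of LHS norms back to the left.

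Concretely, I would first apply Proposition \ref{p1} with $s$ replaced by $1/2$ and with the given $s>1$, and sum; this controls the LHS of \eqref{d29} by the initial data together with $\|\vv f(Y,q)\|_{L^1_T(\cB^{1/2,0})} + \|\vv f(Y,q)\|_{L^1_T(\cB^{s,0})}$. Next I would invoke Proposition \ref{p3}, whose estimate \eqref{d27} bounds this combined source-term norm by $\|\na q\|_{L^1_T(\cB^{1/2,0})} + \|\na q\|_{L^1_T(\cB^{s,0})}$ together with the cross terms $\|\na Y\|_{L^\infty_T(\dB^{3/2})}\|Y_t\|_{L^1_T(\cB^{s+2,0})}$ and $\|Y_t\|_{L^1_T(\dB^{5/2})}(\|Y\|_{L^\infty_T(\cB^{s+2,0})} + \|\na Y\|_{L^\infty_T(\dB^{3/2})})$. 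The two pressure norms are then handled by Proposition \ref{p2} via \eqref{d10ap} at $s=1/2$ and \eqref{d10ah} at the given $s>1$ (with Corollary \ref{col2} available as a cleaner substitute for the $\cB^{1/2,0}$ piece if needed), producing on the right only products of $L^2_T(\dB^{\bullet})$-norms of $Y_t$ and $\p_3 Y$; these I turn into sums of squared norms via Young's inequality, invoking Besov interpolation when the indices produced by Proposition \ref{p2} (namely $\dB^{9/4}$, $\dB^{5/4}$, $\dB^{3/2}$, $\dB^{s+1/4}$, $\dB^{s+5/4}$) do not exactly match the indices $\dB^{3/4}, \dB^{5/2}, \dB^{9/4}, \dB^{s+5/4}, \dB^{s+1/4}$ listed on the RHS of \eqref{d29}.

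To close the argument I would use the smallness hypotheses \eqref{d10} and \eqref{A3}: the factor $\|\na Y\|_{L^\infty_T(\dB^{3/2})}\leq c_0$ in front of $\|Y_t\|_{L^1_T(\cB^{s+2,0})}$, and the factor $\|Y\|_{L^\infty_T(\cB^{s+2,0})} + \|\na Y\|_{L^\infty_T(\dB^{3/2})}\leq 2c_0$ multiplying $\|Y_t\|_{L^1_T(\dB^{5/2})} \leq \|Y_t\|_{L^1_T(\cB^{5/2,0})}$ (the latter being the elementary embedding coming from the finite summation over $k\leq j+C$), together yield $c_0$-multiples of $\|Y_t\|_{L^1_T(\cB^{5/2,0}\cap\cB^{s+2,0})}$ which are absorbed into the LHS for $c_0$ sufficiently small; the $\|Y\|_{L^\infty_T(\cB^{s+2,0})}^2$ term on the RHS of \eqref{d29} appears if instead of using smallness on $\|Y\|$ one Young-splits the mixed product. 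The main technical obstacle, and the place where care is needed, is the reconciliation of the isotropic Besov indices produced by Proposition \ref{p2} with those on the RHS of \eqref{d29} via interpolation; this is delicate when $s$ is only slightly larger than $1$, so that $\dB^{5/2}$ is not sandwiched between $\dB^{3/4}$ and $\dB^{s+5/4}$, and in this regime one must rely on the specific $\dB^{5/2}$ bound produced by the $(Y_t)^2$ term in \eqref{d10ah} rather than on interpolation.
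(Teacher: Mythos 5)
Your proposal is correct and follows essentially the same route as the paper: apply Proposition \ref{p1} at the two indices $\f12$ and $s$, bound the combined source norm via Proposition \ref{p3}, absorb the terms $\|\na Y\|_{L^\infty_T(\dB^{\f32})}\|Y_t\|_{L^1_T(\cB^{s+2,0})}$ and $\|Y_t\|_{L^1_T(\dB^{\f52})}\bigl(\|Y\|_{L^\infty_T(\cB^{s+2,0})}+\|\na Y\|_{L^\infty_T(\dB^{\f32})}\bigr)$ into the left-hand side using \eqref{d10}, \eqref{A3} and the embedding $\|Y_t\|_{L^1_T(\dB^{\f52})}\lesssim\|Y_t\|_{L^1_T(\cB^{\f52,0})}$, and finally invoke Proposition \ref{p2} (\eqref{d10ap} at $s=\f12$, \eqref{d10ah} at $s>1$) with Young's inequality and elementary interpolation of the intermediate Besov indices. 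One small caution: your optional fallback to Corollary \ref{col2} would not work as stated, since it controls $\|\na q\|_{L^1_T(\dB^{\f12})}$ rather than $\|\na q\|_{L^1_T(\cB^{\f12,0})}$ and $\dB^{\f12}$ does not embed into $\cB^{\f12,0}$ (the vertical sum with zero weight does not converge), but your main route through \eqref{d10ap} avoids this issue.
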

\begin{proof}
Thanks to Propositions \ref{p1} and \ref{p3}, we conclude that for
$s>1$, \beno\begin{aligned}
\|Y_t&\|_{\wt{L}^\infty_T(\cB^{\f12,0}\cap\cB^{s,0})}
+\|\p_3Y\|_{\wt{L}^\infty_T(\cB^{\f12,0}\cap\cB^{s,0})}+\|Y\|_{\wt{L}^\infty_T(\cB^{\f52,0}\cap\cB^{s+2,0})}\\
&\quad+\|Y_t\|_{L^1_T(\cB^{\f52,0}\cap\cB^{s+2,0})}+\|\p_3Y\|_{\wt{L}^2_T(\cB^{\f32,0}\cap\cB^{s+1,0})}\\
&\lesssim\|Y_1\|_{\cB^{\f12,0}\cap\cB^{s,0}}+\|\p_3
Y_0\|_{\cB^{\f12,0}\cap\cB^{s,0}}+\|
Y_0\|_{\cB^{\f52,0}\cap\cB^{s+2,0}}+\|\na q\|_{L^1_T(\cB^{\f12,0})}\\
&\quad+\|\na q\|_{L^1_T(\cB^{s,0})}+\|\na Y\|_{L^\infty_T(\dB^{\f{3}{2}})}\|Y_t\|_{L^1_T(\cB^{s+2,0})}\\
&\quad+\|Y_t\|_{L^1_T(\dB^{\f{5}{2}})}\Bigl(\|Y\|_{L^\infty_T(\cB^{s+2,0})}+\|\na
Y\|_{L^\infty_T(\dB^{\f{3}{2}})} \Bigr),\end{aligned}\eeno which
together  with \eqref{d10}, \eqref{A3} and the fact that
$\|Y_t\|_{L^1_T(\dB^{\f52})}\lesssim\|Y_t\|_{L^1_T(\cB^{\f52,0})}$
ensures that \beno
\begin{aligned}
\|Y_t&\|_{\wt{L}^\infty_T(\cB^{\f12,0}\cap\cB^{s,0})}
+\|\p_3Y\|_{\wt{L}^\infty_T(\cB^{\f12,0}\cap\cB^{s,0})}+\|Y\|_{\wt{L}^\infty_T(\cB^{\f52,0}\cap\cB^{s+2,0})}\\
&\quad+\|Y_t\|_{L^1_T(\cB^{\f52,0}\cap\cB^{s+2,0})}+\|\p_3Y\|_{\wt{L}^2_T(\cB^{\f32,0}\cap\cB^{s+1,0})}\\
&\lesssim\|Y_1\|_{\cB^{\f12,0}\cap\cB^{s,0}}+\|\p_3
Y_0\|_{\cB^{\f12,0}\cap\cB^{s,0}}+\|
Y_0\|_{\cB^{\f52,0}\cap\cB^{s+2,0}}\\
&\quad+\|\na q\|_{L^1_T(\cB^{\f12,0})}+\|\na
q\|_{L^1_T(\cB^{s,0})}.\end{aligned}\eeno
Hence applying Proposition \ref{p2} gives rise to \eqref{d29}. This
complete the proof of Proposition \ref{p4}.
\end{proof}

\setcounter{equation}{0}
\section{The proof of Theorem \ref{T}}\label{sect5}

\subsection{{\it A priori} estimate of \eqref{B11}} The goal of this subsection is to present the {\it a priori} energy estimate to smooth
enough solutions of \eqref{B11}.

\begin{lem}\label{p5}
{\sl Let  $Y$ be a smooth enough solution of \eqref{B12}-\eqref{B13}
(or equivalently \eqref{B11}) on $[0,T].$ Then there holds
 \beq \label{e1}
\begin{split}
&\|\Delta_jY_t\|_{L^\infty_T(L^2)}^2+\|\na\Delta_jY_t\|_{L^\infty_T(L^2)}^2+\|\p_3\Delta_jY\|_{L^\infty_T(L^2)}^2
+\|\Delta\Delta_jY\|_{L^\infty_T(L^2)}^2\\
&\quad
+\|\na\Delta_jY_t\|_{L^2_T(L^2)}^2+\|\D\Delta_jY_t\|_{L^2_T(L^2)}^2+\|\p_3\na\Delta_jY\|_{L^2_T(L^2)}^2
\\
&\lesssim\|\Delta_jY_1\|_{L^2}^2+\|\na\Delta_jY_1\|_{L^2}^2+\|\p_3\Delta_jY_0\|_{L^2}^2
+\|\Delta\Delta_jY_0\|_{L^2}^2\\
&\quad
+\Bigl|\int_0^T\bigl(\Delta_j\vv f\
|\ \Delta_jY_t-\f{1}{4}\Delta\Delta_jY-\D\D_jY_t\bigr)\,dt\Bigr|.
\end{split}\eeq
}\end{lem}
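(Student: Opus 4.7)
The plan is to run a dyadic version of the energy method applied to the linearized system \eqref{B12}, treating $\vv f(Y,q)$ as a generic source, and to combine three different energy identities so as to recover the full list of norms in \eqref{e1}. First, I apply $\Delta_j$ to \eqref{B12} to obtain
\beq\label{plan1}
\Delta_j Y_{tt} - \Delta \Delta_j Y_t - \partial_3^2 \Delta_j Y = \Delta_j \vv f,
\eeq
and then take the $L^2$ inner product of \eqref{plan1} with three different test functions, mimicking the strategy already used in the proof of Proposition \ref{p1}.

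The three identities I would derive are: (i) pairing with $\Delta_j Y_t$, which yields
\beno
\tfrac{1}{2}\tfrac{d}{dt}\bigl(\|\Delta_j Y_t\|_{L^2}^2 + \|\partial_3 \Delta_j Y\|_{L^2}^2\bigr) + \|\nabla \Delta_j Y_t\|_{L^2}^2 = (\Delta_j\vv f \mid \Delta_j Y_t);
\eeno
(ii) pairing with $-\Delta \Delta_j Y_t$, which gives
\beno
\tfrac{1}{2}\tfrac{d}{dt}\bigl(\|\nabla \Delta_j Y_t\|_{L^2}^2 + \|\partial_3 \nabla \Delta_j Y\|_{L^2}^2\bigr) + \|\Delta \Delta_j Y_t\|_{L^2}^2 = -(\Delta_j\vv f \mid \Delta \Delta_j Y_t);
\eeno
and (iii) pairing with $\Delta \Delta_j Y$, which after writing $(\Delta_j Y_{tt}\mid \Delta \Delta_j Y) = \frac{d}{dt}(\Delta_j Y_t \mid \Delta \Delta_j Y) + \|\nabla \Delta_j Y_t\|_{L^2}^2$ yields
\beno
\tfrac{d}{dt}\bigl(\tfrac{1}{2}\|\Delta \Delta_j Y\|_{L^2}^2 - (\Delta_j Y_t \mid \Delta \Delta_j Y)\bigr) - \|\nabla \Delta_j Y_t\|_{L^2}^2 + \|\partial_3 \nabla \Delta_j Y\|_{L^2}^2 = -(\Delta_j\vv f \mid \Delta \Delta_j Y).
\eeno
Adding (i) + $\tfrac14$(iii) + (ii) then kills the bad $-\|\nabla \Delta_j Y_t\|_{L^2}^2$ term appearing in (iii), and produces a coercive dissipative contribution $\tfrac34\|\nabla \Delta_j Y_t\|_{L^2}^2 + \tfrac14\|\partial_3 \nabla \Delta_j Y\|_{L^2}^2 + \|\Delta \Delta_j Y_t\|_{L^2}^2$ on the left, with source term on the right exactly
$(\Delta_j\vv f \mid \Delta_j Y_t - \tfrac14 \Delta \Delta_j Y - \Delta \Delta_j Y_t)$, matching the statement of \eqref{e1}.

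Next I introduce the modulated energy
\beno
E_j^2(t) \eqdef \tfrac12\bigl(\|\Delta_j Y_t\|_{L^2}^2 + \|\partial_3\Delta_j Y\|_{L^2}^2 + \|\nabla\Delta_j Y_t\|_{L^2}^2 + \tfrac14\|\Delta \Delta_j Y\|_{L^2}^2\bigr) - \tfrac14(\Delta_j Y_t \mid \Delta \Delta_j Y),
\eeno
and observe that Cauchy--Schwarz and Young's inequality show
\beno
E_j^2(t) \sim \|\Delta_j Y_t\|_{L^2}^2 + \|\partial_3 \Delta_j Y\|_{L^2}^2 + \|\nabla \Delta_j Y_t\|_{L^2}^2 + \|\Delta \Delta_j Y\|_{L^2}^2,
\eeno
so that the resulting differential inequality for $E_j^2(t)$ controls all the $L^\infty_T(L^2)$ norms on the left of \eqref{e1}. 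Integrating in time from $0$ to $T$ then gives simultaneously the $L^\infty_T$ control of $E_j$ and the $L^2_T$ integrals of the dissipation terms, yielding \eqref{e1}.

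The only nontrivial point is the cancellation $(\Delta_j Y_{tt}\mid\Delta\Delta_j Y) - \tfrac{d}{dt}(\Delta_j Y_t\mid\Delta\Delta_j Y) = -(\Delta_j Y_t\mid\Delta\Delta_j Y_t) = \|\nabla\Delta_j Y_t\|_{L^2}^2$, which is standard for wave-type systems but must be exploited with the right coefficient $\tfrac14$ so as not to spoil coercivity; once this combination is fixed, the rest is bookkeeping. Note that the divergence-type constraint $\nabla\cdot Y=\rho(Y)$ plays no role at this stage, as we treat $\vv f$ as an arbitrary forcing; it will re-enter only when estimating the right-hand side in Section \ref{sect4}'s style.
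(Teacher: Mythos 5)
Your proposal is correct and follows essentially the same route as the paper: testing the localized equation \eqref{e2} against $\Delta_j Y_t-\f14\Delta\Delta_jY-\Delta\Delta_jY_t$ (you merely split this into three pairings and add them with the same weights), forming the equivalent modulated energy, and integrating in time. The only cosmetic discrepancy is that your displayed $E_j^2$ omits the $\f12\|\p_3\na\Delta_jY\|_{L^2}^2$ contribution that the combined identity actually produces (cf.\ \eqref{e3}), which is harmless since $\|\p_3\na\Delta_jY\|_{L^2}\leq\|\Delta\Delta_jY\|_{L^2}$ and the stated equivalence of norms is unaffected.
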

\begin{proof}
Applying $\Delta_j$ to \eqref{B12} gives
\beq\label{e2} \Delta_j
Y_{tt}-\Delta \Delta_j Y_t-\p_3^2 \Delta_j Y=\Delta_j\vv f. \eeq
 Taking the $L^2$ inner product of \eqref{e2} with
$\Delta_j Y_t-\f{1}{4}\Delta\Delta_jY-\Delta\Delta_jY_t$, we get, by
a similar derivation of \eqref{d5}, that
\beq\label{e3}\begin{aligned}
&\f{d}{dt}\Bigl\{\f{1}{2}\bigl(\|\Delta_jY_t\|_{L^2}^2+\|\na\Delta_jY_t\|_{L^2}^2+\|\p_3\Delta_jY\|_{L^2}^2
+\|\p_3\na\Delta_jY\|_{L^2}^2
+\f{1}{4}\|\Delta\Delta_jY\|_{L^2}^2\bigr)
\\
&\quad-\f{1}{4}(\Delta_jY_t\ |\ \Delta\Delta_jY)\Bigr\}+\f{3}{4}\|\na\Delta_jY_t\|_{L^2}^2+\|\D\Delta_jY_t\|_{L^2}^2
+\f{1}{4}\|\p_3\na\Delta_jY\|_{L^2}^2\\
&=\bigl(\Delta_j\vv
f\ |\ \Delta_jY_t-\f{1}{4}\Delta\Delta_jY-\Delta\Delta_jY_t\bigr).
\end{aligned}\eeq
However, as
 \beno
 \begin{split}
 &\f{1}{2}\bigl(\|\Delta_jY_t\|_{L^2}^2+\|\na\Delta_jY_t\|_{L^2}^2+\|\p_3\Delta_jY\|_{L^2}^2
 +\|\p_3\na\Delta_jY\|_{L^2}^2
+\f{1}{4}\|\Delta\Delta_jY\|_{L^2}^2\bigr)
\\
&\quad-\f{1}{4}(\Delta_jY_t\ |\ \Delta\Delta_jY)
\sim\|\D_jY_t(t)\|_{L^2}^2+\|\na\Delta_jY_t\|_{L^2}^2+\|\p_3\D_jY(t)\|_{L^2}^2+\|\D\D_jY(t)\|_{L^2}^2,
\end{split}
\eeno by integrating \eqref{e3} over $[0,T]$, we obtain \eqref{e1}.
\end{proof}

To deal with the last line of \eqref{e1}, we need to estimate the
$\vv f(Y,q)$ given by \eqref{B13}. Toward this, we first deal with
the the pressure term in \eqref{B12}.

\begin{lem}\label{p6}
Under the assumptions of Proposition \ref{p2},
 for any $s>-\f12$, one
has \beq\label{e4}\begin{aligned} \|\na q(t)\|_{\dot{H}^s}\lesssim
&\|Y(t)\|_{\dot{H}^{s+2}}\|\na
q(t)\|_{\dot{B}^{\f12}}+\|Y_t(t)\|_{\dot{B}^{\f32}}\|Y_t(t)\|_{\dot{H}^{s+1}}\\
&
+\|Y(t)\|_{\dH^{s+2}}\bigl(\|Y_t(t)\|_{\dot{B}^{\f32}}^2+\|\p_3Y(t)\|_{\dot{B}^{\f32}}^2\bigr)
+\|\p_3 Y(t)\|_{\dot{B}^{\f{3}{2}}}\|\p_3 Y(t)\|_{\dH^{s+1}},
\end{aligned}\eeq for all $t\in [0,T].$
\end{lem}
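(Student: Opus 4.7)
\medskip

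\noindent\textbf{Proof proposal for Lemma~\ref{p6}.}

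The plan is to start from the Poisson-type identity \eqref{d12} for $q$, namely
\beno
\Delta q = -\na\cdot\bigl[(\cA_Y-I)\cA_Y^T\na q\bigr]-\na\cdot\bigl[(\cA_Y^T-I)\na q\bigr]+\na\cdot(\p_t\cA_Y\,Y_t)+\na_Y\cdot\p_3^2 Y.
\eeno
Applying the zero-order operator $\na\D^{-1}$ and using that $\na\D^{-1}\na\cdot$ is bounded on every $\dH^s(\R^3)$, it suffices to bound in $\dH^s$ the three divergence source-terms and in $\dH^{s-1}$ the term $\na_Y\cdot\p_3^2 Y.$ Thus I reduce the claim to four product/trilinear estimates in Sobolev spaces.

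For the first two terms, $(\cA_Y-I)\cA_Y^T\na q$ and $(\cA_Y^T-I)\na q,$ I will apply Bony's decomposition together with the standard product law
\beno
\|fg\|_{\dH^s}\lesssim \|f\|_{\dB^{\f32}}\|g\|_{\dH^s}+\|f\|_{\dH^{s+1}}\|g\|_{\dB^{\f12}},
\eeno
(valid in the relevant range of $s$; for larger $s$ I will use a symmetric variant coming from a different splitting in the remainder term). By the explicit form of $\cA_Y$ in \eqref{B8}, $\cA_Y-I$ is a polynomial in $\na Y$ vanishing at the origin, so $\|\cA_Y-I\|_{\dB^{3/2}}\lesssim \|\na Y\|_{\dB^{3/2}}$ and $\|\cA_Y-I\|_{\dH^{s+1}}\lesssim \|Y\|_{\dH^{s+2}}$ after using the smallness hypothesis \eqref{d10}. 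The first piece yields $\|\na Y\|_{\dB^{3/2}}\|\na q\|_{\dH^s}$, which is absorbed into the left-hand side by the smallness of $c_0$; the second piece produces exactly the term $\|Y\|_{\dH^{s+2}}\|\na q\|_{\dB^{1/2}}$ appearing in \eqref{e4}.

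For $\p_t\cA_Y\,Y_t$, I will note from \eqref{B8} that $\p_t\cA_Y=\na Y_t\,P(\na Y)$ for some polynomial $P$ with $P(0)=I$. The leading contribution $\na Y_t\cdot Y_t$ is handled by the same product inequality, placing $Y_t$ in $\dB^{3/2}$ and $\na Y_t$ in $\dH^s$, producing $\|Y_t\|_{\dB^{3/2}}\|Y_t\|_{\dH^{s+1}}$. The higher-order (cubic and quartic) contributions have an extra $\na Y$ factor; placing this factor in $\dH^{s+1}$ and the two velocity factors in $\dB^{3/2}$ via the algebra property of $\dB^{3/2}_{2,1}$ yields a term bounded by $\|Y\|_{\dH^{s+2}}\|Y_t\|_{\dB^{3/2}}^2$. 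For the last term, I will use the crucial algebraic identity \eqref{d18}, which reveals that $\na_Y\cdot\p_3^2 Y=Q(\na\p_3 Y,\na\p_3 Y,\na Y)$ is purely a sum of quadratic terms in $\na\p_3 Y$ and cubic terms with one factor of $\na Y$; the $\dH^{s-1}$-estimate then proceeds exactly as above, producing $\|\p_3 Y\|_{\dB^{3/2}}\|\p_3 Y\|_{\dH^{s+1}}$ from the quadratic part and $\|Y\|_{\dH^{s+2}}\|\p_3 Y\|_{\dB^{3/2}}^2$ from the cubic part.

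The main obstacle is twofold. First, the stated range $s>-\f12$ likely exceeds the natural window $|s|<\f12$ of the single product inequality used for intuition, so I will need to split into subranges and use slightly different Bony decompositions (in particular varying which frequency piece carries the $\dB^{1/2}$-norm of $\na q$) to get a single uniform bound. Second, the cubic contributions to $\p_t\cA_Y\,Y_t$ and to $Q$ are the most delicate: I must arrange the paraproduct decomposition so that \emph{exactly one} derivative factor falls on $Y$ and is absorbed into $\|Y\|_{\dH^{s+2}}$, with the remaining two factors closing in $\dB^{3/2}_{2,1}$ via its algebra structure. Once these trilinear arrangements are carried out, the four pieces assemble into the right-hand side of \eqref{e4} after absorbing the small-coefficient terms into the left.
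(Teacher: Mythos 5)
Your proposal is correct and follows essentially the same route as the paper: starting from the pressure identity \eqref{d12}, bounding the three divergence-form terms in $\dH^s$ and $\na_Y\cdot\p_3^2Y$ in $\dH^{s-1}$ via Bony-type product laws, exploiting the identity \eqref{d18} for the last term, and absorbing $\|\na Y\|_{\dB^{\f32}}\|\na q\|_{\dH^s}$ by the smallness hypothesis \eqref{d10}. Your worry about subrange splitting is unnecessary, since the product estimate $\|ab\|_{\dH^s}\lesssim\|a\|_{\dB^{\f32}}\|b\|_{\dH^s}+\|a\|_{\dH^{s+1}}\|b\|_{\dB^{\f12}}$ already holds uniformly for $s>-\f32$, which covers the whole stated range.
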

\begin{proof} For any $t\in [0,T],$ we deduce from
\eqref{d12} that \beq\label{e6}\begin{aligned}
\|\na q(t)\|_{\dot{H}^s}\leq&\|((\cA_Y-I)\cA_Y^T\na q)(t)\|_{\dot{H}^s}+\|((\cA_Y^T-I)\na q)(t)\|_{\dot{H}^s}\\
&+\|(\p_t\cA_Y
Y_t)(t)\|_{\dot{H}^s}+\|(\na_Y\cdot\p_3^2Y)(t)\|_{\dot{H}^{s-1}}.
\end{aligned}\eeq
By virtue of \eqref{B8}, we get, by using Bony's decomposition
\eqref{C7},  that for any $s>-\f12$, \beno
\begin{split}
&\|((\cA_Y-I)\cA_Y^T\na q)(t)\|_{\dot{H}^s}+\|((\cA_Y^T-I)\na q)(t)\|_{\dot{H}^s}\\
&\qquad\quad\lesssim\bigl(1+\|\na
Y(t)\|_{\dot{B}^{\f32}}\bigr)^3\Bigl(\|\na
Y(t)\|_{\dot{B}^{\f32}}\|\na q(t)\|_{\dot{H}^s}+\|\na
Y(t)\|_{\dot{H}^{s+1}}\|\na q(t)\|_{\dot{B}^{\f12}}\Bigr),
\end{split}
\eeno and \beno
\begin{split} \|(\p_t\cA_Y
Y_t)(t)\|_{\dot{H}^s}\lesssim&\bigl(1+\|\na
Y(t)\|_{\dot{B}^{\f32}}\bigr)
\|Y_t(t)\|_{\dot{B}^{\f32}}\|Y_t(t)\|_{\dot{H}^{s+1}}+\|\na
Y(t)\|_{\dot{H}^{s+1}}\|Y_t(t)\|_{\dot{B}^{\f32}}^2.
\end{split}
\eeno Along the same line, due to \eqref{d18}, we obtain for any
$s>-\f12$, \beno\begin{aligned}
&\|(\na_Y\cdot\p_3^2Y)(t)\|_{\dot{H}^{s-1}}\lesssim
\|Q(\na\p_3Y,\na\p_3Y,\na Y)(t)\|_{\dot{H}^{s-1}}\\
&\quad\lesssim\|((I+\na Y)\p_3\na Y)(t)\|_{\dot{B}^{\f{1}{2}}}\|\p_3
Y(t)\|_{\dH^{s+1}}+\|((I+\na Y)\p_3\na Y)(t)\|_{\dH^s}\|\p_3
Y(t)\|_{\dot{B}^{\f{3}{2}}}\\
&\quad\lesssim\bigl(1+\|\na Y(t)\|_{\dot{B}^{\f32}}\bigr)\|\p_3
Y(t)\|_{\dot{B}^{\f{3}{2}}}\|\p_3Y(t)\|_{\dH^{s+1}}+\|
Y(t)\|_{\dH^{s+2}}\|\p_3Y(t)\|_{\dot{B}^{\f{3}{2}}}^2.
\end{aligned}\eeno Resuming the above estimates into \eqref{e6} and
using  \eqref{d10} ensures that for any $s>-\f12$,
\beno\begin{aligned}  \|\na q(t)\|_{\dot{H}^s}\lesssim&\|\na
Y(t)\|_{\dB^{\f32}}\|\na q(t)\|_{\dot{H}^s}+ \|\na
Y(t)\|_{\dot{H}^{s+1}}\|\na
q(t)\|_{\dot{B}^{\f12}}+\|Y_t(t)\|_{\dot{B}^{\f32}}\|Y_t(t)\|_{\dot{H}^{s+1}}\\
&
+\|Y(t)\|_{\dH^{s+2}}\bigl(\|Y_t(t)\|_{\dot{B}^{\f32}}^2+\|\p_3Y(t)\|_{\dot{B}^{\f32}}^2\bigr)
+\|\p_3 Y(t)\|_{\dot{B}^{\f{3}{2}}}\|\p_3 Y(t)\|_{\dH^{s+1}},
\end{aligned}\eeno
for any $t\in [0,T],$ which together \eqref{d10}  leads to
\eqref{e4}.
\end{proof}

\begin{lem}\label{L8}
Under the assumptions of Proposition \ref{p2},  for  any $s>-\f12$,
we have \beq\label{e18}\begin{aligned} \|\vv f(Y,q)\|_{L^2_T(\dH^s)}
\lesssim&\|Y\|_{L^\infty_T(\dot{H}^{s+2})}\Bigl(\|\na
q\|_{L^2_T(\dot{B}^{\f12})}+\|
Y_t\|_{L^2_T(\dot{B}^{\f52})}+\|Y_t\|_{L^\infty_T(\dB^{\f32})}\|Y_t\|_{L^2_T(\dB^{\f32})}\\
&+\|\p_3Y\|_{L^\infty_T(\dB^{\f32})}\|\p_3Y\|_{L^2_T(\dB^{\f32})}\Bigr)
+\|Y_t\|_{L^\infty_T(\dot{B}^{\f32})}\|Y_t\|_{L^2_T(\dot{H}^{s+1})}\\
&+\|\p_3Y\|_{L^\infty_T(\dot{B}^{\f{3}{2}})}\|\p_3
Y\|_{L^2_T(\dH^{s+1})} +\|\na Y\|_{L^\infty_T(\dot{B}^{\f32})}\|
Y_t\|_{L^2_T(\dH^{s+2})}\\
&,
\end{aligned}\eeq
and for $-\f12<s\leq \f12$, \beq\label{e20}\begin{aligned} \|\vv
f(Y,&q)\|_{L^1_T(\dH^s)}\lesssim
\|Y\|_{L^\infty_T(\dot{H}^{s+2})}\Bigl(\|\na
q\|_{L^1_T(\dot{B}^{\f12})}+\|
Y_t\|_{L^1_T(\dB^{\f52})}+\|Y_t\|_{L^2_T(\dot{B}^{\f32})}^2\\
&+\|\p_3
Y\|_{L^2_T(\dot{B}^{\f{3}{2}})}^2\Bigr)+\|Y_t\|_{L^2_T(\dot{B}^{\f32})}\|Y_t\|_{L^2_T(\dot{H}^{s+1})}
 +\|\p_3Y\|_{L^2_T(\dot{B}^{\f{3}{2}}_{2,1})}\|\p_3
Y\|_{L^2_T(\dH^{s+1})}.
\end{aligned}\eeq
\end{lem}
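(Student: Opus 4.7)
The plan is to exploit the decomposition $\vv f(Y,q) = \bar{\vv f}(Y) + \tilde{\vv f}(Y,q)$ introduced in \eqref{d22}, with $\tilde{\vv f} = -\na_Y q = -(\cA_Y^T - I)\na q - \na q$ and, via \eqref{d31}, $\bar{\vv f}(Y) = \na \cdot \bigl[(\cA_Y - I)\cA_Y^T \na Y_t + (\cA_Y^T - I)\na Y_t\bigr]$, and to treat each piece separately by isotropic Bony paraproduct decomposition combined with the smallness assumption \eqref{d10}.

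For the pressure contribution in $L^2_T(\dH^s)$, I would apply Bony's decomposition pointwise in time to $(\cA_Y^T - I)\na q$, splitting the three pieces so that either $\na Y$ sits in $L^\infty_T(\dB^{\f32})$ and $\na q$ in $L^2_T(\dH^s)$, or $\na Y$ sits in $L^\infty_T(\dH^{s+1})$ and $\na q$ in $L^2_T(\dB^{\f12})$; the constraint $s>-\f12$ is precisely what makes the remainder term converge in both splittings. The smallness of $\|\na Y\|_{L^\infty_T(\dB^{\f32})}\leq c_0$ absorbs the self-referential $\|\na q\|_{L^2_T(\dH^s)}$ occurrence, and the residual $\|\na q\|_{L^2_T(\dH^s)}$ is controlled by taking the $L^2_T$ norm of \eqref{e4} and applying Hölder's inequality in time (with $\na Y$, $Y_t$ and $\p_3 Y$ placed in $L^\infty_T$ where appropriate), yielding the first three lines on the right-hand side of \eqref{e18}.

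For $\bar{\vv f}(Y)$, the divergence form \eqref{d31} reduces the estimate to controlling $(\cA_Y - I)\cA_Y^T \na Y_t + (\cA_Y^T - I)\na Y_t$ in $L^2_T(\dH^{s+1})$. Applying Bony once more with $\na Y$ placed in $L^\infty_T$ and $\na Y_t$ in $L^2_T$, one splitting produces $\|\na Y\|_{L^\infty_T(\dB^{\f32})}\|Y_t\|_{L^2_T(\dH^{s+2})}$ (with the small prefactor absorbable by \eqref{d10}), the other produces $\|Y\|_{L^\infty_T(\dH^{s+2})}\|Y_t\|_{L^2_T(\dB^{\f52})}$; the cubic piece $(\cA_Y - I)\cA_Y^T\na Y_t$ is handled identically with an extra factor of $\|\na Y\|_{L^\infty_T(\dB^{\f32})}$. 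Combining these with Step 1 yields \eqref{e18}.

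For the $L^1_T(\dH^s)$ bound with $-\f12<s\leq\f12$, the same paraproduct splittings are used, but Hölder's inequality in time is applied as $L^2_T\cdot L^2_T$ to the quadratic-in-$Y$ remainders arising from the elliptic equation \eqref{d12}, generating the squares $\|Y_t\|_{L^2_T(\dB^{\f32})}^2$ and $\|\p_3 Y\|_{L^2_T(\dB^{\f32})}^2$ on the right-hand side, while cross terms produce $\|Y_t\|_{L^2_T(\dB^{\f32})}\|Y_t\|_{L^2_T(\dH^{s+1})}$ and $\|\p_3 Y\|_{L^2_T(\dB^{\f32})}\|\p_3 Y\|_{L^2_T(\dH^{s+1})}$. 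The main technical obstacle is selecting, for each paraproduct piece, the splitting that places the small companion factor in a $\dB^{\f32}$-type norm absorbable by \eqref{d10} while keeping the $\dH^{s+1}$ or $\dH^{s+2}$ norm of the remaining factor with the correct time integrability; the narrower range $s\leq\f12$ in \eqref{e20} is precisely what allows $\na q$ to appear on the right-hand side only in the $\dB^{\f12}$ norm after the elliptic estimate of Lemma \ref{p6} is integrated with Hölder's inequality in time.
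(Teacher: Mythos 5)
Your outline of the $L^2_T$ bound \eqref{e18} is essentially the paper's own argument: split $\vv f=\bar{\vv f}+\tilde{\vv f}$ as in \eqref{d22} and \eqref{d31}, use product laws together with \eqref{d10} to reduce everything to $\|\na q\|_{L^2_T(\dH^s)}$, $\|Y\|_{L^\infty_T(\dH^{s+2})}\bigl(\|\na q\|_{L^2_T(\dB^{\f12})}+\|Y_t\|_{L^2_T(\dB^{\f52})}\bigr)$ and $\|\na Y\|_{L^\infty_T(\dB^{\f32})}\|Y_t\|_{L^2_T(\dH^{s+2})}$, and then remove $\|\na q\|_{L^2_T(\dH^s)}$ by integrating the elliptic estimate \eqref{e4} of Lemma \ref{p6} in time. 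That part is sound (the "absorption" of the self-referential pressure term actually takes place inside the proof of Lemma \ref{p6}, not in the $\tilde{\vv f}$ estimate, but that is only a presentational point).

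The genuine gap is in the $L^1_T$ estimate \eqref{e20}. You propose to use "the same paraproduct splittings", but the symmetric splitting that worked for $\bar{\vv f}$ in $L^2_T$ produces, in $L^1_T$, the term $\|\na Y\|_{L^\infty_T(\dB^{\f32})}\|\na Y_t\|_{L^1_T(\dH^{s+1})}\sim\|\na Y\|_{L^\infty_T(\dB^{\f32})}\|Y_t\|_{L^1_T(\dH^{s+2})}$. This term is absent from \eqref{e20} and cannot be tolerated there: the only $L^1$-in-time control of $Y_t$ supplied by Propositions \ref{p1} and \ref{p4} and carried by the energy functional \eqref{e15} is in $\cB^{\f52,0}\cap\cB^{s_1+2,0}$, i.e. in $\dB^{\f52}$-type norms, whereas $\|Y_t\|_{L^1_T(\dH^{s+2})}$ for $s$ near $s_2\in(-\f12,-\f14)$ is not controlled uniformly in $T$ (it weights low frequencies more heavily than $\dB^{\f52}$), and the small prefactor $c_0$ does not help since the left-hand side is $\|\vv f\|_{L^1_T(\dH^s)}$ and there is nothing to absorb into. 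The paper's proof handles $\bar{\vv f}$ in $L^1_T$ with the asymmetric estimate \eqref{e21}, namely $\|(\cA_Y^T-I)\na Y_t\|_{L^1_T(\dH^{s+1})}\lesssim\|\cA_Y^T-I\|_{L^\infty_T(\dH^{s+1})}\|\na Y_t\|_{L^1_T(\dB^{\f32})}$, which puts $Y_t$ exclusively in $L^1_T(\dB^{\f52})$; this one-sided product law is valid only for $s+1\le\f32$, and that is the true origin of the restriction $s\le\f12$ in \eqref{e20} — not, as you claim, the way $\na q$ enters after integrating \eqref{e4}, since Lemma \ref{p6} holds for every $s>-\f12$ and its time integration (with $L^2_T\cdot L^2_T$ H\"older, which you describe correctly) imposes no further constraint. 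Replace the symmetric splitting by this asymmetric one in the $L^1_T$ step and the rest of your argument goes through.
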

\begin{proof} According to \eqref{d22}, we split the estimate of
$\vv f(Y,q)$ into that of $\tilde{\vv f}(Y,q)$ and $\bar{\vv f}(Y).$
\begin{itemize}
\item {\bf Estimates on $\tilde{\vv f}(Y,q)=-\na_Yq$.}\end{itemize}

Thanks to \eqref{B8}, we get, by using product laws in Besov spaces
(\cite{bcd}), that $s>-\f12$, \begin{multline*} \|\tilde{\vv
f}(Y,q)(t)\|_{\dH^s}\lesssim \bigl(1+\|\na
Y(t)\|_{\dot{B}^{\f32}}\bigr)
\bigl(\|\na Y(t)\|_{\dot{B}^{\f32}}\|\na q(t)\|_{\dH^s}\\
+\|\na Y(t)\|_{\dH^{s+1}}\|\na q(t)\|_{\dot{B}^{\f12}}\bigr)+\|\na
q(t)\|_{\dH^s}
\end{multline*}
which along with \eqref{d10} implies that for any $s>-\f12$,
\beq\label{e10}  \|\tilde{\vv f}(Y,q)(t)\|_{\dH^s}\lesssim \|\na
q(t)\|_{\dH^s} +\|Y(t)\|_{\dH^{s+2}}\|\na
q(t)\|_{\dot{B}^{\f12}}. \eeq
\begin{itemize}
\item {\bf Estimates on $\bar{\vv f}(Y)=(\na_Y\cdot\na_Y-\D)Y_t$.}
\end{itemize}

It follows from \eqref{d31} that  \beno
\|\bar{\vv f}(Y)(t)\|_{\dH^s}\lesssim\|[(\cA_Y-I)\cA_Y^T\na
Y_t](t)\|_{\dH^{s+1}}+\|[(\cA_Y^T-I)\na Y_t](t)\|_{\dH^{s+1}}, \eeno
so that by virtue of  \eqref{B8}, we get, by applying product laws
in Besov spaces, that for any $s>-\f{5}{2}$, \beno\begin{aligned}
\|\bar{\vv f}(Y)(t)\|_{\dH^s}\lesssim&\bigl(1+\|\na
Y(t)\|_{\dot{B}^{\f32}}\bigr)^3 \Bigl(\|\na
Y(t)\|_{\dot{B}^{\f32}}\|\na Y_t(t)\|_{\dH^{s+1}}+\|\na
Y(t)\|_{\dH^{s+1}}\|\na Y_t(t)\|_{\dot{B}^{\f32}}\Bigr),
\end{aligned}\eeno
which along with \eqref{d10} implies that for any $s>-\f52$,
\beq\label{e12} \|\bar{\vv f}(Y)\|_{L^2_T(\dH^s)}\lesssim\|\na
Y\|_{L^\infty_T(\dot{B}^{\f32})}\|\na Y_t\|_{L^2_T(\dH^{s+1})}
+\|\na Y\|_{L^\infty_T(\dH^{s+1})}\|\na
Y_t\|_{L^2_T(\dot{B}^{\f32})}. \eeq

On the other hand,  we get by using Bony's decomposition \eqref{C7}
that for $-\f52<s\leq \f12$, \beno\begin{aligned}
\|(\cA_Y^T-I)\na Y_t\|_{L^1_T(\dH^{s+1})}&\lesssim\|\cA_Y^T-I\|_{L^\infty_T(\dH^{s+1})}\|\na Y_t\|_{L^1_T(\dB^{\f32})}\\
&\lesssim\bigl(1+\|\na Y\|_{L^\infty_T(\dot{B}^{\f32})}\bigr)\|\na
Y\|_{L^\infty_T(\dH^{s+1})}\| Y_t\|_{L^1_T(\dB^{\f52})}.
\end{aligned}\eeno
Together with \eqref{d10}, this gives \beno\begin{aligned}
\|(\cA_Y^T-I)\na Y_t\|_{L^1_T(\dH^{s+1})}\lesssim\|\na
Y\|_{L^\infty_T(\dH^{s+1})}\| Y_t\|_{L^1_T(\dB^{\f52})}.
\end{aligned}\eeno
The same estimate holds for term $\|[(\cA_Y-I)\cA_Y^T\na
Y_t](t)\|_{\dH^{s+1}}$. We thus obtain for $-\f52<s\leq\f12$,
\beq\label{e21} \|\bar{\vv f}(Y)\|_{L^1_T(\dH^{s})}\lesssim\|\na
Y\|_{L^\infty_T(\dH^{s+1})}\| Y_t\|_{L^1_T(\dB^{\f52})}. \eeq


By summing up \eqref{e10} and \eqref{e12}, we obtain for
$s>-\f12,$
\begin{multline*}
 \|\vv f(Y,q)\|_{L^2_T(\dH^s)}\lesssim\|\na q
\|_{L^2_T(\dH^s)} +\|Y\|_{L^\infty_T(\dH^{s+2})}\bigl(\|\na q
\|_{L^2_T(\dB^{\f12})}\\+\|Y_t\|_{L^2_T(\dot{B}^{\f52})}\bigr)+\|\na
Y\|_{L^\infty_T(\dot{B}^{\f32})}\|Y_t\|_{L^2_T(\dH^{s+2})},
\end{multline*} which together with \eqref{e4} yields
\eqref{e18}.

 On the other hand,  by summing up  \eqref{e10} and
\eqref{e21},  we get  for $-\f12<s\leq\f12$ \beno \|\vv
f(Y,q)\|_{L^1_T(\dH^s)}\lesssim\|\na q \|_{L^1_T(\dH^s)}
+\|Y\|_{L^\infty_T(\dH^{s+2})}\bigl(\|\na q
\|_{L^1_T(\dB^{\f12})}+\|Y_t\|_{L^1_T(\dot{B}^{\f52})}\bigr), \eeno
from which and \eqref{e4}, we achieve \eqref{e20}. This concludes
the proof of Lemma \ref{L8}.
\end{proof}


\subsection{The proof of Theorem \ref{T}}
The proof of Theorem \ref{T} is  based on the following proposition:
\begin{prop}\label{p8}
{\sl Let  $s_1>\f{5}{4}$ and $s_2\in(-\f{1}{2},-\f{1}{4})$. Let
$(Y,q)$ be a smooth enough solution of \eqref{B12}-\eqref{B13} on
$[0,T].$ We denote \beq\label{e15}\begin{aligned}
&\mathcal{E}_T^{s_1,s_2}(Y,q)\eqdefa
E_T^{s_1}(Y,q)+E_T^{s_2}(Y,q)+\|Y_t\|_{\wt{L}^\infty_T(\cB^{\f12,0}\cap\cB^{s_1,0})}^2
+\|\p_3Y\|_{\wt{L}^\infty_T(\cB^{\f12,0}\cap\cB^{s_1,0})}^2\\
&\qquad\qquad\qquad
+\|Y\|_{\wt{L}^\infty_T(\cB^{\f52,0}\cap\cB^{s_1+2,0})}^2+\|Y_t\|_{L^1_T(\cB^{\f52,0}\cap\cB^{s_1+2,0})}^2
+\|\p_3Y\|_{\wt{L}^2_T(\cB^{\f32,0}\cap\cB^{s_1+1,0})}^2,\\
&\mathcal{E}_0^{s_1,s_2}\eqdefa E^{s_1}_0+E^{s_2}_0+\|Y_1\|_{\cB^{\f12,0}\cap\cB^{s_1,0}}^2
+\|\p_3Y_0\|_{\cB^{\f12,0}\cap\cB^{s_1,0}}^2
+\|Y_0\|_{\cB^{\f52,0}\cap\cB^{s_1+2,0}}^2,
\end{aligned}\eeq where  \beno\begin{aligned}
E_T^s(Y,q)\eqdefa&\|Y_t\|_{\wt{L}^\infty_T(\dot{H}^s\cap\dot{H}^{s+1})}^2
+\|\p_3Y\|_{\wt{L}^\infty_T(\dot{H}^s)}^2+\|Y\|_{\wt{L}^\infty_T(\dot{H}^{s+2})}^2
\\
&+\|Y_t\|_{L^2_T(\dot{H}^{s+1}\cap\dot{H}^{s+2})}^2+\|\p_3Y\|_{L^2_T(\dot{H}^{s+1})}^2
+\|\na q\|_{L^2_T(\dot{H}^s)}^2+\|\na q\|_{L^1_T(\dot{H}^s)}^2,
\end{aligned}\eeno
and  \beno
E_0^s\eqdefa\|Y_1\|_{\dot{H}^s\cap\dot{H}^{s+1}}^2+\|\p_3Y_0\|_{\dot{H}^s}^2+\|Y_0\|_{\dot{H}^{s+2}}^2.
\eeno We assume that \beq\label{e16} \|\na
Y\|_{L^\infty_T(\dot{B}^{\f32})}+\|
Y\|_{L^\infty_T(\cB^{s_1+2,0})}\leq c_0\quad\mbox{and}\quad
\|Y\|_{L^\infty_T(\dB^{s_1+\f32})}\leq 1, \eeq for some $c_0$
sufficiently small,
then there holds \beq\label{e23} \mathcal{E}_T^{s_1,s_2}(Y,q)\leq
C_1\mathcal{E}_0^{s_1,s_2}+C_1\bigl(\mathcal{E}_T^{s_1,s_2}(Y,q)^{1/2}+\mathcal{E}_T^{s_1,s_2}(Y,q)+\mathcal{E}_T^{s_1,s_2}(Y,q)^2\bigr)\mathcal{E}_T^{s_1,s_2}(Y,q),
\eeq for some uniform positive constant $C_1.$}
\end{prop}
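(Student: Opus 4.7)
\textbf{Proof plan for Proposition \ref{p8}.} The strategy is to assemble the bound on $\mathcal{E}_T^{s_1,s_2}(Y,q)$ from three ingredients: the isotropic $\dH^{s}$ energy identity of Lemma \ref{p5} (applied with $s=s_1$ and $s=s_2$), the anisotropic Besov estimate of Proposition \ref{p4}, and the source-term estimates of Lemmas \ref{p6}--\ref{L8} together with Corollary \ref{col2}. The smallness \eqref{e16} will let us absorb the high-order and linear-in-$\nabla q$ terms, turning the estimate into the self-improving form \eqref{e23}.

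\emph{Isotropic part.} Multiplying \eqref{e1} by $2^{2j s}$ (for $s=s_1$ or $s=s_2$) and summing in $j$, and estimating each of the three pairings on the right-hand side by H\"older either as $\|\D_j\vv f\|_{L^1_T(L^2)}\|\D_j(Y_t,\D Y,\D Y_t)\|_{L^\infty_T(L^2)}$ or as $\|\D_j\vv f\|_{L^2_T(L^2)}\|\D_j(Y_t,\D Y,\D Y_t)\|_{L^2_T(L^2)}$, one obtains
\begin{equation*}
E_T^s(Y,q)\lesssim E_0^s+\|\vv f\|_{L^1_T(\dH^s)}\bigl(E_T^s(Y,q)\bigr)^{1/2}+\|\vv f\|_{L^2_T(\dH^s)}\bigl(E_T^s(Y,q)\bigr)^{1/2}+\|\nabla q\|_{L^2_T(\dH^s)}^2+\|\nabla q\|_{L^1_T(\dH^s)}^2.
\end{equation*}
The pressure bound \eqref{e4} of Lemma \ref{p6} controls the $L^2_T(\dH^{s_i})$ and $L^1_T(\dH^{s_i})$ norms of $\nabla q$; crucially, the auxiliary quantity $\|\nabla q\|_{L^1_T(\dB^{1/2})}$ that appears on the right of \eqref{e4} is handled by Corollary \ref{col2}. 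Then \eqref{e18} (for the $L^2_T$ piece) and \eqref{e20} (for the $L^1_T$ piece, applicable when $s=s_2\in(-\tfrac12,-\tfrac14)\subset(-\tfrac12,\tfrac12]$) dominate $\vv f(Y,q)$ by combinations of norms already present in $\mathcal{E}_T^{s_1,s_2}(Y,q)$. For $s=s_1>\tfrac54$, the $L^1_T(\dH^{s_1})$ bound on $\vv f$ is obtained from \eqref{e4} coupled with the analogue of the argument in Lemma \ref{L8}, where the high-regularity factors are placed on $\nabla Y$ or $\nabla Y_t$ and the low-regularity ones on the $\dB^{3/2}$ or $\dB^{5/2}$ slot (which, by Remark \ref{rmk1}(3), embed into $\dH^{s_1}\cap\dH^{s_2}$).

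\emph{Anisotropic part and interpolation.} For the $\cB^{\cdot,0}$ piece of $\mathcal{E}_T^{s_1,s_2}(Y,q)$, Proposition \ref{p4} (applied with $s=s_1>1$) delivers the desired estimate provided its right-hand side is bounded by $\mathcal{E}_0^{s_1,s_2}+(\text{nonlinear polynomial in }\mathcal{E}_T^{s_1,s_2})$. Each quadratic term there is of the form $\|Y_t\|_{L^2_T(\dB^{\alpha})}^2$ or $\|\p_3 Y\|_{L^2_T(\dB^{\alpha})}^2$ with $\alpha\in\{\tfrac34,\tfrac54,\tfrac32,\tfrac94,\tfrac52,s_1+\tfrac14,s_1+\tfrac54\}$; since $s_2+1<\alpha<s_1+2$ in all relevant cases, Remark \ref{rmk1}(3) gives $\|\cdot\|_{\dB^\alpha}\lesssim\|\cdot\|_{\dH^{s_2+1}}+\|\cdot\|_{\dH^{s_1+2}}$ (with analogous estimates for $\p_3 Y$), so these norms are controlled by $E_T^{s_1}(Y,q)+E_T^{s_2}(Y,q)$. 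The cross term $\|Y\|_{L^\infty_T(\cB^{s_1+2,0})}^2$ is of course part of $\mathcal{E}_T^{s_1,s_2}(Y,q)$ itself. Combining these two parts and writing $\mathcal{E}_T^{s_1,s_2}\cdot\mathcal{E}_T^{s_1,s_2}$ or $(\mathcal{E}_T^{s_1,s_2})^{1/2}\cdot\mathcal{E}_T^{s_1,s_2}$ depending on whether a source-bound factor is squared or linear, we reach the desired polynomial inequality \eqref{e23}.

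\emph{Main obstacle.} The delicate point is to verify, term by term, that every product norm produced by Lemmas \ref{p6}--\ref{L8} and by Proposition \ref{p4}, once repackaged by the interpolation $\dH^{s_1}\cap\dH^{s_2}\hookrightarrow\dB^\alpha$, carries a genuine power $\geq 2$ of $\mathcal{E}_T^{s_1,s_2}$; the danger is a linear-in-$\nabla q$ contribution such as $\|\nabla Y\|_{L^\infty_T(\dB^{3/2})}\|\nabla q\|_{L^1_T(\dH^{s_1})}$ on the right-hand side, which must be \emph{absorbed} into the left using the smallness \eqref{e16} rather than estimated by $\mathcal{E}_T^{s_1,s_2}\cdot(\cdots)$. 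The smallness hypotheses $\|\nabla Y\|_{L^\infty_T(\dB^{3/2})}\le c_0$ and $\|Y\|_{L^\infty_T(\cB^{s_1+2,0})}\le c_0$ are precisely what is needed for that absorption. Once this bookkeeping is done, and the remaining genuinely nonlinear contributions are estimated by $\mathcal{E}_T^{s_1,s_2}$ times a power of itself (coming from the interpolation at the pressure and source levels), the bound \eqref{e23} follows with a uniform constant $C_1$.
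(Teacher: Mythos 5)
Your proposal is correct and follows essentially the same route as the paper: the $\dH^{s_i}$ energy inequality of Lemma \ref{p5}, the pressure bounds of Lemma \ref{p6} and Corollary \ref{col2}, the source estimates (\eqref{e18} at $s=s_1,s_2$ and \eqref{e20} at $s=s_2$), Proposition \ref{p4} at $s=s_1$ for the anisotropic block, and Remark \ref{rmk1}(3) to convert the various $\dB^{\alpha}$ norms into $E_T^{s_1}+E_T^{s_2}$, all closed under the smallness \eqref{e16}. The only place you deviate is the $L^1_T(\dH^{s_1})$ bound on $\vv f$: the paper simply writes $\|\vv f\|_{L^1_T(\dH^{s_1})}\lesssim\|\vv f\|_{L^1_T(\cB^{s_1,0})}$ and invokes Propositions \ref{p2} and \ref{p3}, whereas you propose to rerun the Lemma \ref{L8} product-law argument at the high regularity $s_1$ with a tame splitting. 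That variant does work, but be careful with your parenthetical appeal to Remark \ref{rmk1}(3): the factors that must carry the $L^1$-in-time integrability (for instance $\|\na Y_t\|_{L^1_T(\dB^{3/2})}$ or $\|\na Y_t\|_{L^1_T(\dH^{s_1+1})}$) cannot be handled by Sobolev interpolation, since no $L^1_T(\dH^\sigma)$ norm of $Y_t$ is part of $\mathcal{E}_T^{s_1,s_2}$; the control has to come from the anisotropic norms via $\|Y_t\|_{L^1_T(\dB^{5/2})}\lesssim\|Y_t\|_{L^1_T(\cB^{5/2,0})}$ and $\|Y_t\|_{L^1_T(\dH^{s_1+2})}\lesssim\|Y_t\|_{L^1_T(\cB^{s_1+2,0})}$, exactly the fact the paper uses repeatedly. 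With that replacement (and with the pairing against $\D\D_jY_t$ in \eqref{e1} taken in $L^2_T\times L^2_T$, as your either/or allows), your bookkeeping closes and yields \eqref{e23}.
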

\begin{proof}
Under the assumptions \eqref{e16},  for $s=s_1$ and $s=s_2$, we
deduce from Lemmas \ref{p5} and \ref{p6} that \beq\label{e22}
\begin{split}
E_T^s(Y,q)\lesssim & E_0^s+\|Y\|_{L^\infty_T(\dH^{s+2})}^2(\|\na
q\|_{L^1_T(\dot{B}^{\f12})}^2+\|\na q\|_{L^2_T(\dot{B}^{\f12})}^2)
\\
&+\Bigl(\|Y_t\|_{L^\infty_T(\dot B^{\f32})}^2+\|Y_t\|_{L^2_T(\dot
B^{\f32})}^2+\|\p_3 Y\|_{L^\infty_T(\dot B^{\f32})}^2+\|\p_3
Y\|_{L^2_T(\dot B^{\f32})}^2\Bigr)\\
&\times \Bigl(1+\|Y_t\|_{L^2_T(\dot B^{\f32})}^2
+\|\p_3 Y\|_{L^2_T(\dot B^{\f32})}^2\Bigr)E_T^s(Y,q)\\
&+\|\vv f\|_{L_T^2(\dH^s)}\|\D Y_t\|_{L_T^2(\dH^s)} +\|\vv
f\|_{\wt{L}_T^1(\dH^s)}\bigl(\|Y_t\|_{\wt{L}_T^\infty(\dH^s)}+\|\D
Y\|_{\wt{L}_T^\infty(\dH^s)}\bigr).
\end{split}
\eeq However, taking $s=s_1$ in \eqref{e18}  gives rise to \beno
\begin{split}
\|\vv f\|_{L_T^2(\dH^{s_1})}\lesssim &\Bigl(\|\na q\|_{L^2_T(\dot
B^{\f12})}+\|Y_t\|_{L^\infty_T(\dot B^{\f32})}
\bigl(1+\|Y_t\|_{L^2_T(\dot B^{\f32})}\bigr)+\|\na Y\|_{L^\infty_T(\dot B^{\f32})}\\
&+\|\p_3Y\|_{L^\infty_T(\dot B^{\f32})}
\bigl(1+\|\p_3Y\|_{L^2_T(\dot B^{\f32})}\bigr)+\| Y_t\|_{L^2_T(\dot
B^{\f52})}\Bigr)\bigl(E_T^{s_1}(Y,q)\bigr)^{\f12},
\end{split}
\eeno and applying Proposition \ref{p2} and Proposition \ref{p3}
leads to \beno\begin{aligned} \|\vv
f\|_{\wt{L}_T^1(\dH^{s_1})}\lesssim& \|\vv
f\|_{L_T^1(\dH^{s_1})}\lesssim\|\vv f\|_{L_T^1(\cB^{s_1,0})}\\
\lesssim & \|Y_t\|_{L^2_T(\dot{B}^{\f{3}{4}})}^2 +\|\p_3
Y\|_{L_T^2(\dot{B}^{\f{3}{4}})}^2+\|
Y_t\|_{L^2_T(\dot{B}^{\f{5}{2}})}^2+\|Y_t\|_{L^2_T(\dot{B}^{s_1+\f{5}{4}})}^2+
\|\p_3Y\|_{L_T^2(\dot{B}^{\f{3}{2}})}^2\\
&+\|\p_3 Y\|_{L_T^2(\dot{B}^{s_1+\f{1}{4}})}^2
+\|Y\|_{L^\infty_T(\dB^{\f{5}{2}}\cap\cB^{s_1+2,0})}^2+
\|Y_t\|_{L^1_T(\dB^{\f{5}{2}}\cap\cB^{s_1+2,0})}^2,
\end{aligned}\eeno
where we used the fact that $\|Y_t\|_{L^1_T(\dB^{\f52})}\lesssim\|Y_t\|_{L^1_T(\cB^{\f52,0})}$.
While as  $s_1>\f{5}{4}$ and $s_2\in(-\f12,-\f14),$ one has
\begin{multline*}\|Y_t\|_{L^2_T(\dot{B}^{\f{3}{4}})}+\|\p_3
Y\|_{L_T^2(\dot{B}^{\f{3}{4}})}+\|
Y_t\|_{L^2_T(\dot{B}^{\f{5}{2}})}+\|Y_t\|_{L^2_T(\dot{B}^{s_1+\f{5}{4}})}
+ \|\p_3Y\|_{L_T^2(\dot{B}^{\f{3}{2}})}\\ +\|\p_3
Y\|_{L_T^2(\dot{B}^{s_1+\f{1}{4}})}\lesssim \|\p_3
Y\|_{L_T^2(\dH^{s_2+1}\cap\dH^{s_1+1})}+\|Y_t\|_{L_T^2(\dH^{s_2+1}\cap\dH^{s_1+2})}.
\end{multline*}
 Along the same line, we deduce from  Corollary \ref{col2} and its
proof that \beno \begin{split} \|\na
q\|_{L^1_T(\dot{B}^{\f12})}+\|\na q\|_{L^2_T(\dot{B}^{\f12})}
\lesssim&
\|Y_t\|_{L^2_T(\dB^{\f32})}\bigl(\|Y_t\|_{L^\infty_T(\dB^{\f32})}+\|Y_t\|_{L^2_T(\dB^{\f32})}\bigr)\\
&+
\|\p_3Y\|_{L^2_T(\dB^{\f32})}\bigl(\|\p_3Y\|_{L^\infty_T(\dB^{\f32})}+\|\p_3Y\|_{L^2_T(\dB^{\f32})}\bigr)
\lesssim
\cE^{s_1,s_2}_T(Y,q).
\end{split}
\eeno As a consequence, we obtain \beno \|\vv
f\|_{L^2_T(\dH^{s_1})}+\|\vv f\|_{\wt{L}^1_T(\dH^{s_1})}\lesssim
\Bigl(\cE^{s_1,s_2}_T(Y,q)+\cE^{s_1,s_2}_T(Y,q)^{\f12}\Bigr)E^{s_1}_T(Y,q)^{\f12}+\cE^{s_1,s_2}_T(Y,q).
\eeno Resuming the above estimates into \eqref{e22} yields
\beq\label{e22ad}\begin{aligned} E^{s_1}_T(Y,q)\lesssim&
E_0^{s_1}+\bigl(\cE^{s_1,s_2}_T(Y,q)^{\f12}+\cE^{s_1,s_2}_T(Y,q)+\cE^{s_1,s_2}_T(Y,q)^2\bigr)E^{s_1}_T(Y,q)\\
&\quad+\cE^{s_1,s_2}_T(Y,q)E^{s_1}_T(Y,q)^{\f12}.
\end{aligned}\eeq

On the other hand, it follows from Lemma \ref{L8} and the fact that $\|Y_t\|_{L^1_T(\dB^{\f52})}\lesssim\|Y_t\|_{L^1_T(\cB^{\f52,0})}$,
that
\beno
\|\vv f\|_{L_T^2(\dH^{s_2})}+\|\vv
f\|_{\wt{L}_T^1(\dH^{s_2})}\lesssim\bigl(\cE^{s_1,s_2}_T(Y,q)^{\f12}+\cE^{s_1,s_2}_T(Y,q)\bigr)E^{s_1}_T(Y,q)^{\f12}
\eeno from which, we infer, by a similar derivation of
\eqref{e22ad}, that \beq\label{e22af} E^{s_2}_T(Y,q)\lesssim
E_0^{s_2}+\bigl(\cE^{s_1,s_2}_T(Y,q)^{\f12}+\cE^{s_1,s_2}_T(Y,q)+\cE^{s_1,s_2}_T(Y,q)^2\bigr)E^{s_2}_T(Y,q).
\eeq

Therefore, by summing up \eqref{e22ad}, \eqref{e22af}  and
\eqref{d29} with $s=s_1$, we achieve \eqref{e23}. This completes the
proof of Proposition \ref{p8}.
\end{proof}

Now we are in a position to complete the proof of Theorem \ref{T}.

\begin{proof}[Proof of Theorem \ref{T}] Given initial data  $(Y_0, Y_1)$ satisfying the assumptions
listed in Theorem \ref{T}, we deduce by Proposition \ref{p8} and  a
standard argument that \eqref{B12}-\eqref{B13} has a unique solution
$(Y,q)$ on $[0,T],$ which satisfies \eqref{B16} on $[0,T]$. Let
$T^*$ be the largest possible time so that \eqref{B16} holds. Then
to complete the proof of Theorem \ref{T}, we only need to show that
$T^\ast=\infty$ and there holds \eqref{B17} under the assumptions of
\eqref{B14} and \eqref{B15}. Otherwise, if $T^\ast<\infty,$  we
denote
 \beq\label{e24} \begin{split}
&\bar{T}\eqdefa \max\bigl\{\ T<T^\ast:\ \
\mathcal{E}_T^{s_1,s_2}(Y,q)\leq \eta_0^2\ \ \ \bigr\},
\end{split}
\eeq for $\eta_0$ so small that $C_1(\eta_0+\eta_0^2+\eta_0^4)\leq\f12,$ and
\beq\label{e25}
\begin{split}
&\|\na Y\|_{L^\infty_T(\dot{B}^{\f32})}+\|
Y\|_{L^\infty_T(\cB^{s_1+2,0})}+\|
Y\|_{L^\infty_T(\dot{B}^{s_1+\f32})}
\leq C_2\mathcal{E}_T^{s_1,s_2}(Y,q)^{\f12}\leq C_2\eta_0\leq 1,
\end{split}
\eeq for the same $C_1$  as that in \eqref{e23}.

 We shall prove that $\bar{T}=\infty$ provided that
$\e_0$ is sufficiently small in \eqref{B15}.  In fact, thanks to
\eqref{e25}, we get by applying Proposition \ref{p8} that
 \beq\label{e26}
\mathcal{E}_{\bar T}^{s_1,s_2}(Y,q)\leq 2C_1\mathcal{E}_0^{s_1,s_2}.
\eeq In particular, if we take $\e_0$ so small that
$2C_1\e_0^2\leq\f12\eta_0^2,$ \eqref{e26} contradicts with
\eqref{e24} if $\bar{T}<\infty.$ This in turn shows that
$\bar{T}=T^\ast=\infty,$ and there holds \eqref{B17}. This completes
the proof of Theorem \ref{T}.
\end{proof}

\section{The proof of Theorem \ref{th1} and Theorem \ref{th2}}\label{sect6}
\setcounter{equation}{0}

With Theorem \ref{T} and  Lemma \ref{funct} in the Appendix
\ref{appendc} in hand, we can now present the proof of Theorem
\ref{th1}.

\begin{proof}[Proof of Theorem \ref{th1}] Under  the assumptions of Theorem \ref{th1}, we
get, by applying Lemma \ref{LL2}, that there exists a vector-valued
function $Y_0(y)=(Y_0^1(y),Y_0^2(y),Y_0^3(y))^T$ so that \beno
X_0(y)= I+Y_0(y)\quad\mbox{and}\quad
 U_0\circ
X_0(y)=\na_yX_0(y)=I+\na_yY_0(y), \eeno which in particular implies
\beq\label{f0} \f{\p X_0^{-1}(x)}{\p x}=(I+\na_y Y_0)^{-1}\circ
X_0^{-1}(x)=U_0^{-1}(x)=I-\na_x\vv\Psi. \eeq  Let $Y_1(y)\eqdefa\vv
u_0(X_0(y)).$ Applying Lemma \ref{funct} with $\Phi(x)=X^{-1}_0(x)$
gives
 \beno
\begin{split}
\|Y_1\|_{\dot{H}^{s_1+1}}+\|Y_1\|_{\dot{H}^{s_2}}\leq &
C\bigl(\|\na\vv\Psi\|_{\dot{B}^{\f32}}\bigr)\Bigl(\|\vv
u_0\|_{\dH^{s_2}}+\|\vv
u_0\|_{\dH^{s_1+1}}+\|\na\vv\Psi\|_{\dH^{s_1+\f12}}\|\vv
u_0\|_{\dH^{2}}\\
&+(1+\|\D\vv\Psi\|_{H^{s_1-1}})\|\na\vv
u_0\|_{H^{s_1}}\Bigr)\\
\leq &C\bigl(\|\na\vv\Psi\|_{\dot{B}^{\f3p}_{p,1}}\bigr)\Bigl(\|\vv
u_0\|_{\dH^{s_2}}+\bigl(1+\|\na\vv\Psi\|_{B^{s_1+\f3p-\f32}_{p,2}}\bigr)\|\na\vv
u_0\|_{B^{s_1+\f3p-\f32}_{p,2}}\Bigr),
\end{split}
 \eeno where in the last step, we used Lemma \ref{L2} so that\beno
 \|\na\vv u_0\|_{H^{s_1}}\lesssim \|\na\vv
 u_0\|_{B^{s_1+\f3p-\f32}_{p,1}}\quad\mbox{for}\quad p\in (1,2).
 \eeno
Similarly as $\|\p_3Y_0\|_{\cB^{\f12,0}}\lesssim
\|Y_0\|_{\cB^{\f12,1}}\lesssim \|Y_0\|_{\dB^{\f32}},$ and by virtue
of \eqref{app3ad},  $Y_0=\vv \Psi(X_0(y)),$ one has
\begin{multline*}
\|Y_0\|_{\dH^{s_2+2}\cap\dH^{s_1+2}}+\|\p_3Y_0\|_{\dH^{s_2}}+\|\p_3Y_0\|_{\cB^{\f12,0}}\\
\leq C\bigl(\|\na\vv\Psi\|_{\dot{B}^{\f3p}_{p,1}}\bigr)\Bigl(\|\vv
\Psi\|_{\dB^{s_2+\f3p-\f12}_{p,2}}+\bigl(1+\|\D\vv\Psi\|_{B^{s_1+\f3p-\f32}_{p,2}}\bigr)\|\na\vv
\Psi\|_{B^{s_1+\f3p-\f12}_{p,2}}\Bigr),
\end{multline*}
Whereas as $p<2,$ it follows from Definition \ref{def2} and Lemma
\ref{L2} that \beq\label{f0pq}
\begin{split}
\|a\|_{\cB^{s,0}}=\sum_{j,k\in\Z^2}2^{js}\|\D_j\D_k^v
a\|_{L^2}\lesssim&
\sum_{j\in\Z}2^{j(s+\f2p-1)}\|\D_ja\|_{L^p}\Bigl(\sum_{k\leq j+N_0}2^{k(\f1p-\f12)}\Bigr)\\
\lesssim & \sum_{j\in\Z}2^{j(s+\f3p-\f32)}\|\D_ja\|_{L^p}\lesssim
\|a\|_{\dB^{s+\f3p-\f32}_{p,1}}.
\end{split}
\eeq Thanks to \eqref{f0pq}, we get, by applying  Lemma \ref{funct},
that \beno
\begin{split}
\|Y_0\|_{\cB^{\f52,0}\cap\cB^{s_1+2,0}}\lesssim&
\|Y_0\|_{\dB^{1+\f3p}_{p,1}\cap\dB^{s_1+\f3p+\f12}_{p,1}}\\
\leq & C\bigl(\|\na\vv\Psi\|_{\dot{B}^{\f3p}_{p,1}}\bigr)\Bigl(\|\vv
\Psi\|_{\dB^{1+\f3p}_{p,1}}+\bigl(1+\|\D\vv\Psi\|_{B^{s_1+\f3p-\f32}_{p,1}}\bigr)\|\na\vv
\Psi\|_{B^{s_1+\f3p-\f12}_{p,1}}\Bigr),
\end{split} \eeno and
\beno
 \begin{split}
 \|Y_1\|_{\cB^{\f12,0}\cap\cB^{s_1,0}}\lesssim& \|Y_1\|_{\dB^{\f3p-1}_{p,1}\cap\dB^{s_1+\f3p-\f32}_{p,1}}\\
 \leq &C\bigl(\|\na\vv\Psi\|_{\dot{B}^{\f3p}_{p,1}}\bigr)\Bigl(\|\vv
u_0\|_{\dB^{\f3p-1}_{p,1}}+\bigl(1+\|\na\vv\Psi\|_{B^{s_1+\f3p-\f52}_{p,1}}\bigr)\|\na\vv
u_0\|_{B^{s_1+\f3p-\f52}_{p,1}}\Bigr).
\end{split} \eeno Therefore,
thanks to \eqref{B2}, we conclude that \beq\label{f1}\begin{aligned}
&\|Y_0\|_{\dot{H}^{s_1+2}\cap\dot{H}^{s_2+2}}+\|Y_0\|_{\cB^{s_1+2,0}\cap\cB^{\f52,0}}
+\|\p_3Y_0\|_{\dot{H}^{s_2}}+\|\p_3Y_0\|_{\cB^{s_1,0}\cap\cB^{\f12,0}}\\
&\qquad+\|Y_1\|_{\dot{H}^{s_1+1}}+\|Y_1\|_{\dot{H}^{s_2}}+ \|Y_1\|_{\cB^{s_1,0}\cap\cB^{\f12,0}}\\
&\leq C\bigl(\|\na\vv\Psi\|_{\dB^{s_2+\f3p-\f32}_{p,2}\cap
B^{s_1+\f3p-\f12}_{p,1}}+\|\vv
u_0\|_{\dH^{s_2}\cap\dB^{\f3p-1}_{p,1}}+\|\na\vv
u_0\|_{B^{s_1+\f3p-\f32}_{p,1}}\bigr)\leq C\e_0,
\end{aligned}\eeq
from which,  and Theorem \ref{T}, we deduce that the system
\eqref{B11} (equivalently \eqref{B12}-\eqref{B13}) has a unique
global solution $(Y,q)$ which satisfies \eqref{B16} and \eqref{B17}
provided that $\e_0$ in \eqref{B2} is sufficiently small.

We denote $X(t,y)\eqdefa y+Y(t,y).$  Then it follows from
\eqref{B17} that $X(t,y)$ is invertible with respect to $y$
variables and  we denote its inverse mapping by $X^{-1}(t,x).$ Since
$\det(1+\na Y)=1$, the adjoint matrix $\cA_Y$ of $I+\na Y$ satisfies
\beno \na\cdot\cA_Y=\vv 0\quad\text{and}\quad \cA_Y=(I+\na Y)^{-1}
\eeno which implies \beq\label{f2} \na_x\cdot[(I+\na Y)\circ
X^{-1}]=\bigl(\na\cdot[\cA_Y(I+\na Y)]\bigr)\circ X^{-1}=\vv 0. \eeq
Then we  define $U(t,x)=\bigl(\bar{\vv b}(t,x), \tilde{\vv b}(t,x),
\vv b(t,x)\bigr)$ and $(\vv u(t,x), p(t,x))$  through \beq\label{f3}
\begin{split}
&U(t,x)=(\bar{\vv b}, \tilde{\vv b}, \vv b)(t,x)\eqdefa (I+\na Y)(t,
X^{-1}(t,x))\quad \mbox{and}\\
&\vv u(t,x)\eqdefa Y_t(t, X^{-1}(t,x)), \quad p(t,x)\eqdefa
q(t,X^{-1}(t,x))-\f12|\vv b(t,x)|^2,  \end{split}\eeq from which and
\eqref{f2}, we infer that \beno \dv\bar{\vv b}=\dv\tilde{\vv
b}=\dv\vv b=0. \eeno Hence according to Section \ref{sect2}, $(U,
\vv u, p)$ thus defined globally solves \eqref{B4}. Then to complete
the proof of Theorem \ref{th1}, it amounts to prove \eqref{th1wr}.
For this, we first notice from \eqref{f3} that \beno (\na\vv u)\circ
X(t,y)=\na_yY_t(t,y) \bigl(I+\na_yY(t,y)\bigr)^{-1}, \eeno which
along with the proof of \eqref{poit} in the Appendix \ref{appendc}
implies \beq\label{f5}
\begin{aligned} \|\vv u\|_{L^1(\R^+;\dB^{\f52})}
&\lesssim(1+\|\na_yY\|_{L^\infty(\R^+;\dB^{\f32})})^2\|\na_y
Y_t\|_{L^1(\R^+;\dB^{\f32})}\\
&\leq C(\|\na Y\|_{L^\infty(\R^+;\dot{B}^{\f32})})\|
Y_t\|_{L^1(\R^+;\cB^{\f52,0})}.
\end{aligned}\eeq
Again thanks to \eqref{f3}, we get, by applying Lemma \ref{funct}
with $\Phi=X(t,y),$ that \beq\label{f6}\begin{aligned}
&\|(\bar{\vv b}-\vv e_1, \tilde{\vv b}-\vv e_2)\|_{L^\infty(\R^+;\dH^{s_2+1})}+\|\vv b-\vv e_3\|_{L^\infty(\R^+;\dH^{s_2})}\\
&\quad+\|\vv u\|_{L^\infty(\R^+; \dH^{s_2})}+\|\vv b-\vv e_3\|_{L^2(\R^+;\dH^{s_2+1})}+\|\vv u\|_{L^2(\R^+; \dH^{s_2+1})}\\
&\leq C(\|\na Y\|_{L^\infty(\R^+;\dot{B}^{\f32})})\Bigl(\|(\p_1 Y,\p_2 Y)\|_{L^\infty(\R^+;\dH^{s_2+1})}+\|\p_3 Y\|_{L^\infty(\R^+;\dH^{s_2})}\\
&\quad+\|Y_t\|_{L^\infty(\R^+;\dH^{s_2})}+\|\p_3
Y\|_{L^2(\R^+;\dot{H}^{s_2+1})}+\|Y_t\|_{L^2(\R^+; \dH^{s_2+1})}
\Bigr).
\end{aligned}\eeq
Along the same line, one has \beq\label{f7}\begin{aligned} \|\vv
u\|_{L^1(\R^+;\dH^{s_1+2})} \leq C(\|\na
Y\|_{L^\infty(\R^+;\dot{B}^{\f32})})(1+\|\D
Y\|_{L^\infty(\R^+;H^{s_1})})\|\na Y_t\|_{L^1(\R^+;H^{s_1+1})},
\end{aligned}\eeq
and
\beq\label{f8}\begin{aligned}
&\|(\bar{\vv b}-\vv e_1, \tilde{\vv b}-\vv e_2)\|_{L^\infty(\R^+;\dH^{s_1+1})}+\|\vv b-\vv e_3\|_{L^\infty(\R^+;\dH^{s_1+1})}\\
&\quad+\|\vv u\|_{L^\infty(\R^+; \dH^{s_1+1})}+\|\vv b-\vv e_3\|_{L^2(\R^+;\dH^{s_1+1})}+\|\vv u\|_{L^2(\R^+; \dH^{s_1+2})}\\
&\leq C(\|\na Y\|_{L^\infty(\R^+;\dot{B}^{\f32})})\Bigl(\|\na
Y\|_{L^\infty(\R^+;\dH^{s_1+1})}+\|Y_t\|_{L^\infty(\R^+;\dH^{s_1+1})}
\\
&\quad+\|\p_3Y\|_{L^2(\R^+;\dH^{s_1+1})}+\|Y\|_{L^\infty(\R^+;\dH^{s_1+\f32})}\bigl(\|\na
Y\|_{L^\infty(\R^+; \dH^{2})}+\|Y_t\|_{L^\infty(\R^+;
\dH^{2})}\\
&\quad+\|\p_3 Y\|_{L^2(\R^+;
\dH^{2})}\bigr)+(1+\|\D Y\|_{L^\infty(\R^+;H^{s_1})})\bigl(\|\D Y\|_{L^\infty(\R^+; H^{s_1})}\\
&\quad+\|\na Y_t\|_{L^\infty(\R^+;H^{s_1})}+\|\p_3\na
Y\|_{L^2(\R^+; H^{s_1})}+\|\na Y_t\|_{L^2(\R^+;H^{s_1+1})}\bigr)
\Bigr).
\end{aligned}\eeq

Consequently, we deduce from \eqref{B17},  \eqref{C5}, \eqref{f1}, \eqref{f5} to \eqref{f8}
and the fact that $\|u\|_{\dH^s}\lesssim\|u\|_{\cB^{s,0}}$ that{\bf
\beq\label{f9}\begin{aligned}
&\|(\bar{\vv b}-\vv e_1, \tilde{\vv b}-\vv e_2)\|_{L^\infty(\R^+;\dH^{s_1+1}\cap\dH^{s_2+1})}+\|\vv b-\vv e_3\|_{L^\infty(\R^+;\dH^{s_1+1}\cap\dH^{s_2})}\\
&\quad+\|\vv u\|_{L^\infty(\R^+;\dH^{s_1+1}\cap\dH^{s_2})}+\|\vv b-\vv e_3\|_{L^2(\R^+;\dH^{s_1+1}\cap\dH^{s_2+1})}
\\
&\quad+\|\vv u\|_{L^2(\R^+; \dH^{s_1+2}\cap\dH^{s_2+1})}+\|\vv u\|_{L^1(\R^+; \dH^{s_1+2}\cap\dB^{\f52})}\\
&\leq C\bigl(\|\na\vv\Psi\|_{\dB^{s_2+\f3p-\f32}_{p,2}\cap
B^{s_1+\f3p-\f12}_{p,1}}+\|\vv
u_0\|_{\dH^{s_2}\cap\dB^{\f3p-1}_{p,1}}+\|\na\vv
u_0\|_{B^{s_1+\f3p-\f32}_{p,1}}\bigr),
\end{aligned}\eeq}
provided that $\e_0$ is sufficiently small in \eqref{B2}.

On the other hand, taking space divergence to the momentum equations
of \eqref{B1} gives rise to \beq\label{lpqw} \na p=-\f12\na(|\vv
b|^2)+\na(-\D)^{-1}\dv(\vv u\cdot\na\vv u-\vv b\cdot\na\vv b), \eeq
 then applying product laws in Sobolev spaces
gives rise to \beno
\begin{split}
\|&\na p\|_{L^2(\R^+;\dot{H}^{s_1}\cap\dot{H}^{s_2})}\\
&\leq C \bigl(\|\vv u\|_{L^\infty(\R^+;\dB^{\f32})}\|\na\vv
u\|_{L^2(\R^+;\dot{H}^{s_1}\cap\dot{H}^{s_2})}+\|\vv
u\|_{L^\infty(\R^+;\dot{H}^{s_1}\cap\dot{H}^{s_2})}\|\na \vv
u\|_{L^2(\R^+;\dB^{\f32})}\\
&\quad+(1+\|\vv b-\vv e_3\|_{L^\infty(\R^+;\dot{B}^{\f32})})
\|\na\vv b\|_{L^2(\R^+;\dot{H}^{s_1}\cap\dot{H}^{s_2})}+\|\vv b-\vv
e_3\|_{L^\infty(\R^+;\dot{H}^{s_1+1})}\|\na\vv
b\|_{L^2(\R^+;\dot{B}^{\f12})}\bigr),
\end{split}
\eeno which together with \eqref{f9} ensures that
 \beno
\|\na p\|_{L^2(\R^+;\dH^{s_1}\cap\dot{H}^{s_2})}\leq
C\bigl(\|\na\vv\Psi\|_{\dB^{s_2+\f3p-\f32}_{p,2}\cap
B^{s_1+\f3p-\f12}_{p,1}}+\|\vv
u_0\|_{\dH^{s_2}\cap\dB^{\f3p-1}_{p,1}}+\|\na\vv
u_0\|_{B^{s_1+\f3p-\f32}_{p,1}}\bigr), \eeno provided that $\e_0$ is
sufficiently small in \eqref{B2}. This completes the proof of
\eqref{th1wr} and thus Theorem \ref{th1}.
\end{proof}

Before we present the proof of Theorem \ref{th2}, we shall first
prove the following blow-up criterion for smooth enough solutions of
\eqref{B1}.

\begin{prop}\label{p9}
{\sl Let  $\vv b_0-\vv e_3\in H^{s}(\R^3)$ and $\vv u_0\in
H^s(\R^3)$ for $s>\f32,$ \eqref{B1} has a unique solution $(\vv b,
\vv u)$ on $[0,T]$ for  some $T>0$ so that $\vv b-\vv e_3\in
C([0,T]; H^{s}(\R^3)),$ $ \vv u\in  C([0,T]; H^{s}(\R^3))$ {\bf with
$\na\vv u\in L^2((0,T);\dot{H}^{s+1}(\R^3))$ }and $ \na p\in C([0,T];
H^{s-1}(\R^3)).$ Moreover, if $T^\ast$ is the life span to this
solution, and $T^\ast<\infty,$ one has  \beq
\int_0^{T^\ast}\bigl(\|\na\vv u(t)\|_{L^\infty}+\|\vv
b(t)\|_{L^\infty}^2\bigr)\,dt=\infty. \label{qs2}\eeq}
\end{prop}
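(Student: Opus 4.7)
\medbreak
\noindent\textbf{Proof proposal.} The argument splits into local existence in $H^s$ and a Gronwall-type propagation of regularity under the assumed integrability. For local existence I would use a Friedrichs mollification: replace every nonlinear product $\vv f\cdot\nabla\vv g$ in \eqref{B1} by $J_\varepsilon\vv f\cdot\nabla\vv g$, where $J_\varepsilon$ is a standard mollifier. For each $\varepsilon>0$ the regularized system is globally solvable in $L^2$, and Moser-type multiplicative inequalities, the Sobolev embedding $H^s\hookrightarrow L^\infty$ (valid for $s>\frac{3}{2}$), and the parabolic smoothing in the $\vv u$-equation yield uniform $H^s$ bounds on a short interval $[0,T_0]$ with $T_0=T_0(\|\vv u_0\|_{H^s},\|\vv b_0-\vv e_3\|_{H^s})$. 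Passing $\varepsilon\to 0$ and verifying uniqueness by an $L^2$ energy estimate on the difference of two solutions (exploiting the MHD cancellation at the $L^2$ level) delivers the local solution with the stated regularity.

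\medbreak
For the blow-up criterion, suppose toward contradiction that $T^{\ast}<\infty$ while $M:=\int_0^{T^{\ast}}(\|\nabla\vv u(t)\|_{L^\infty}+\|\vv b(t)\|_{L^\infty}^2)\,dt<\infty$. Writing $\vv w:=\vv b-\vv e_3$ and $E(t):=\|\vv u(t)\|_{H^s}^2+\|\vv w(t)\|_{H^s}^2$, the goal is the differential inequality
$$\frac{d}{dt}E(t)+\|\nabla\vv u(t)\|_{H^s}^2\leq C\bigl(1+\|\nabla\vv u(t)\|_{L^\infty}+\|\vv b(t)\|_{L^\infty}^2\bigr)E(t).$$
Gronwall then bounds $E$ uniformly on $[0,T^{\ast})$, and reapplying the local existence theorem near $T^{\ast}$ contradicts the maximality of $T^{\ast}$.

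\medbreak
To derive the inequality, I would apply $\Lambda^\sigma$ for $0\leq\sigma\leq s$ to the $\vv u$-equation and to the $\vv w$-equation
$$\partial_t\vv w+\vv u\cdot\nabla\vv w=\vv w\cdot\nabla\vv u+\partial_3\vv u,$$
pair with $\Lambda^\sigma\vv u$ and $\Lambda^\sigma\vv w$ respectively, and sum. The divergence-freeness $\nabla\cdot\vv u=\nabla\cdot\vv b=0$ cancels the top-order transport parts, leaving commutators; the pressure vanishes against $\Lambda^\sigma\vv u$; and the $\partial_3\vv u$ coupling cancels against its partner in the $\vv u$-equation (coming from $\vv b=\vv w+\vv e_3$) after integration by parts. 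For the two MHD coupling terms I would rewrite $(\vv b\cdot\nabla\vv b\,|\,\vv u)_{\dot H^\sigma}=-(\vv b\otimes\vv b\,|\,\nabla\vv u)_{\dot H^\sigma}$ and use Kato--Ponce to get $\|\vv b\otimes\vv b\|_{\dot H^\sigma}\lesssim\|\vv b\|_{L^\infty}\|\vv b\|_{\dot H^\sigma}$; the other coupling is bounded directly via $\|\vv b\cdot\nabla\vv u\|_{\dot H^\sigma}\lesssim\|\vv b\|_{L^\infty}\|\vv u\|_{\dot H^{\sigma+1}}+\|\vv b\|_{\dot H^\sigma}\|\nabla\vv u\|_{L^\infty}$. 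Young's inequality absorbs each $\eta\|\nabla\vv u\|_{\dot H^\sigma}^2$ factor into the parabolic dissipation on the left-hand side and yields the desired $\|\vv b\|_{L^\infty}^2E$ contribution.

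\medbreak
The principal obstacle I anticipate is the $\vv w$-transport commutator $([\Lambda^\sigma,\vv u\cdot\nabla]\vv w\,|\,\Lambda^\sigma\vv w)$, whose naive Kato--Ponce bound requires $\|\nabla\vv w\|_{L^\infty}$, a norm not controlled by $M$. The remedy is a Bony paraproduct decomposition $\vv u\cdot\nabla\vv w=T_{\vv u}\nabla\vv w+T_{\nabla\vv w}\vv u+R(\vv u,\nabla\vv w)$: after the $\nabla\cdot\vv u=0$ cancellation, the first piece contributes a standard paradifferential commutator bounded by $\|\nabla\vv u\|_{L^\infty}\|\vv w\|_{\dot H^\sigma}^2$, while the remaining two pieces I would control by trading a derivative off $\vv w$ into a $\dot B^0_{\infty,\infty}\supset L^\infty$ norm, producing bounds of the form $\|\vv w\|_{L^\infty}\|\vv u\|_{\dot H^{\sigma+1}}\|\vv w\|_{\dot H^\sigma}$, with the $\|\vv u\|_{\dot H^{\sigma+1}}$ factor absorbed by the parabolic dissipation via Young. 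The parabolic regularity of $\vv u$ is thus the decisive resource that compensates for the missing diffusion of $\vv b$.
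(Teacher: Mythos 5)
Your proposal is correct and follows essentially the same route as the paper: a frequency-localized $H^s$ energy estimate in which the dangerous transport/coupling terms are bounded by $\|\nabla\vv u\|_{L^\infty}$ times the energy plus $\|\vv b\|_{L^\infty}\|\nabla\vv u\|_{\dot H^s}\|\vv b-\vv e_3\|_{\dot H^s}$, the latter absorbed into the viscous dissipation by Young's inequality, followed by Gronwall and a continuation argument (the paper implements this with $\Delta_j$ blocks and the commutator/product estimates of \cite{bcd}, which is the same paradifferential mechanism as your $\Lambda^\sigma$ plus Bony decomposition). The Friedrichs mollification for local existence is a standard substitute for the paper's citation of \cite{Majda} and changes nothing essential.
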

\begin{proof} It is well-known that the existence of solution to a
nonlinear PDE basically follows from the uniform estimates to some
smooth enough approximate solutions. For simplicity, we may only
present {\it a priori} estimates to smooth enough solutions of
\eqref{B1} (one may check \cite{Majda} for the detailed proof to the
related system). As a matter of fact, let $\vec{b}\eqdefa\vv b-\vv
e_3,$ we first get, by using a standard energy estimate for
\eqref{B1}, that \beq\label{qs3} \f12\f{d}{dt}\bigl(\|\vec{
b}\|_{L^2}^2+\|\vv u\|_{L^2}^2\bigr)+\|\na\vv u\|_{L^2}^2=0.  \eeq
Along the same line, applying $\D_j$ to the system \eqref{B1} and
then taking $L^2$ inner product of the resulting equations with
$(\D_j\vec{b},\D_j\vv u),$ we obtain \beq\label{qs4}\begin{split}
\f12\f{d}{dt}&(\|\D_j\vec b\|_{L^2}^2+\|\D_j\vv u\|_{L^2}^2)+\|\na\D_j\vv u\|_{L^2}^2\\
=&-\bigl(\D_j(\vv u\cdot\na\vec b)\ |\ \D_j\vec b\bigr)+\bigl(\D_j(\vec b\cdot\na\vv u)\ |\ \D_j\vec b\bigr)\\
& -\bigl(\D_j(\vv u\cdot\na\vv u)\ |\ \D_j\vv
u\bigr)+\bigl(\D_j(\vec b\cdot\na\vec b)\ |\ \D_j\vv u\bigr).
\end{split}
\eeq By virtue of the commutator estimates (see Section 2.10 of
\cite{bcd}), we write \beno
\begin{split}
&\bigl|\bigl(\D_j(\vv u\cdot\na \vec b)\ |\ \D_j\vec
b\bigr)\bigr|\lesssim c_j(t)^22^{-2js}\bigl(\|\na\vv
u(t)\|_{L^\infty}\|\vec b(t)\|_{\dH^s}^2+\|\vec
b(t)\|_{L^\infty}\|\na\vv u(t)\|_{\dH^s}\|\vec b(t)\|_{\dH^s}\bigr),\\
&\bigl|\bigl(\D_j(\vv u\cdot\na \vv u)\ |\ \D_j\vv
u\bigr)\bigr|\lesssim c_j(t)^22^{-2js}\|\na\vv
u(t)\|_{L^\infty}\|\vv u(t)\|_{\dH^s}^2\quad\mbox{for any}\quad s>0.
\end{split}
\eeno Whereas it follows from product laws in Sobolev spaces that
\beno \bigl|\bigl(\D_j(\vec b\cdot\na \vv u)\ |\ \D_j\vec
b\bigr)\bigr|\lesssim c_j(t)^22^{-2js}\bigl(\|\vec
b(t)\|_{L^\infty}\|\na\vv u(t)\|_{\dH^s}+\|\na \vv
u(t)\|_{L^\infty}\|\vec b(t)\|_{\dH^s}\bigr)\|\vec
b(t)\|_{\dH^s},\eeno and \beno \bigl|\bigl(\D_j(\vec b\cdot\na \vec
b)\ |\ \D_j\vv u\bigr)\bigr|=\bigl|\bigl(\D_j(\vec b\otimes \vec b)\
|\ \D_j\na\vv u\bigr)\bigr|\lesssim c_j(t)^22^{-2js}\|\vec
b(t)\|_{L^\infty}\|\vec b(t)\|_{\dH^s}\|\na \vv u(t)\|_{\dH^s}.\eeno

Resuming the above estimates into \eqref{qs4} and using \eqref{qs3},
we conclude that for any $s>0$, \beq\label{qs6} \begin{split} \|\vv
u(t)\|_{H^s}^2+\|\vec b(t)&\|_{H^s}^2+\|\na\vv
u\|_{L^2_t(H^s)}^2\leq\|\vv u_0\|_{\dH^s}^2+\|\vec
b_0\|_{\dH^s}^2\\
&+C\int_0^t\bigl(\|\na\vv u(t')\|_{L^\infty}+\|\vec
b(t')\|_{L^\infty}^2\bigr)\bigl(\|\vv u(t')\|_{\dH^s}^2+\|\vec
b(t')\|_{\dH^s}^2\bigr)\,dt'.
\end{split}\eeq
Notice that $s>\frac32,$ one has, $\|\na\vv
u(t)\|_{L^\infty}\lesssim \|\na\vv u(t)\|_{H^s},$ we thus achieve
\begin{multline*}
\|\vv u(t)\|_{H^s}^2+\|\vec b(t)\|_{H^s}^2+\|\na\vv u\|_{L^2_t(H^s)}^2
\leq\|\vv u_0\|_{\dH^s}^2+\|\vec
b_0\|_{\dH^s}^2\\+C\int_0^t\bigl(\|\vv
u(t')\|_{H^s}^2+\|\vec b(t')\|_{H^s}^2\bigr)\bigl(\|\vv
u(t')\|_{\dH^s}^2+\|\vec b(t')\|_{\dH^s}^2\bigr)d{t'},
\end{multline*}
from which, we infer that there exists a positive time $T^\ast,$ so
that there holds \beq\label{qs7} \|\vv
u\|_{L^\infty_T(H^s)}^2+\|\vec b(t)\|_{L^\infty_T(H^s)}^2+\|\na\vv
u\|_{L^2_T(H^s)}^2\leq C_T(\|\vv u_0\|_{\dH^s}^2+\|\vec
b_0\|_{\dH^s}^2)\quad\mbox{for any}\quad T<T^\ast.\eeq  which along
with  \eqref{lpqw} ensures that $\na p\in C([0,T]; H^{s-1}(\R^3))$
for any $T<T^\ast.$ This concludes the existence part of Proposition
\ref{p9}.

Finally, applying Gronwall's inequality to \eqref{qs6} yields
\begin{multline*}
\|\vv u(t)\|_{H^s}^2+\|\vec b(t)\|_{H^s}^2+\|\na\vv u\|_{L^2_t(H^s)}^2\\
\leq(\|\vv u_0\|_{H^s}^2+\|\vec
b_0\|_{H^s}^2)\exp\Bigl(C\int_0^t\bigl(\|\na\vv
u(t')\|_{L^\infty}+\|\vec b(t')\|_{L^\infty}^2\bigr)\,d{t'}\Bigr)
\quad\text{for}\quad t<T^*,
\end{multline*}\
which together with a classical continuous argument implies
\eqref{qs2}. This completes the proof of Proposition \ref{p9}.
\end{proof}

Now we are in a position to complete the proof of Theorem \ref{th2}.

\begin{proof}[Proof of Theorem \ref{th2}] Under the assumption of
Theorem \ref{th2}, we deduce from Proposition \ref{LL1} that there
exists a $\vv \Psi=(\psi_1,\psi_2,\psi_3)^T$ so that there holds
\eqref{LL1pq}-\eqref{B3p3}. Notice that for $s_2\in (-\f12,-\f14)$
and $p\in (1,2),$ $s_2+\f3p-\f12>0,$ then it is easy to observe that
\beno \|\na\vv\Psi\|_{\dB^{s_2+\f3p-\f32}_{p,2}\cap
B^{s_1+\f3p-\f12}_{p,1}}\lesssim \|\vv\Psi\|_{
B^{s_1+\f3p+\f12}_{p,1}}\lesssim\|\vv b_0-\vv e_3\|_{
B^{s_1+\f3p+\f12}_{p,1}}. \eeno  Therefore, under the assumption of
\eqref{A1}, for $U_0=(I-\na\vv\Psi)^{-1},$ we infer from Theorem
\ref{th1} that  \eqref{B4} has a unique global solution $(U,\vv
u,p)$ so that there holds \eqref{th1wq} and \eqref{th1wr}. Let
$U=(\bar{\vv b}, \tilde{\vv b},\vv b),$ then according to the
discussions at the beginning of Section \ref{sect2}, $(\vv b, \vv u,
p)$ thus obtained solves \eqref{B1}, which is in fact the unique
solution of \eqref{B1} with initial data $(\vv b_0, \vv u_0),$ and
there holds \eqref{th2wr}.

On the other hand, by virtue of Proposition \ref{p9}, given initial
data $(\vv b_0, \vv u_0)$ with $\vv b_0-\vv e_3\in H^{s}(\R^3)$ and
$\vv u_0\in H^{s}(\R^3)$ (since $\vv u_0\in\dH^{s_2}(\R^3)$ and
$\na\vv u_0\in  H^{s-1}(\R^3)$) for $s\geq s_1+2,$ \eqref{B1} has a
unique solution $(\vv b, \vv u, p)$ with $\vv b-\vv e_3\in
C([0,T];H^{s}(\R^3)),$ $\na p\in C([0,T]; H^{s-1}(\R^3)),$ and $\vv
u\in C([0,T]; H^{s}(\R^3))$ {\bf with $\na\vv u\in L^2((0,T); \dH^{s+1}(\R^3))$} for any
fixed $T<T^\ast.$  Furthermore, if $T^\ast<\infty,$ there holds
\eqref{qs2}.  Due to the uniqueness, this solution must coincide
with the one obtained in the last paragraph. Then thanks to
\eqref{th2wr}, \eqref{qs2} can not be true for any finite $T^\ast,$
and therefore $T^\ast=\infty$ and there holds \eqref{th2wq}. This
completes the proof of Theorem \ref{th2}.

\end{proof}

\appendix

\setcounter{equation}{0}
\section{The Besov estimates to  functions composed with a measure preserving diffeomorphism }\label{appendc}

\begin{lem}\label{funct}
{\sl Let $\Phi(y)=y+\Psi(y)$ be a smooth volume preserving
diffeomorphism on $\R^3.$
 Then  for $u, v\in\cS(\R^3),$ there hold
\beq\label{apb1}\begin{split}
 &\|u\circ\Phi\|_{\dot{B}^s_{p,r}}\leq
C\bigl(\|\na\Psi\|_{\dot{B}^{\f{3}{p}}_{p,1}}\bigr)\|u\|_{\dot{B}^s_{p,r}}\quad\mbox{and} \\
&\|v\circ\Phi^{-1}\|_{\dot{B}^s_{p,r}}\leq
C\bigl(\|\na\Psi\|_{\dot{B}^{\f{3}{p}}_{p,1}}\bigr)\|v\|_{\dot{B}^s_{p,r}}\quad
\mbox{
for}\ \ s\in (-1,2],\\
&\|u\circ\Phi\|_{\dot{B}^s_{p,r}}\leq
C\bigl(\|\na\Psi\|_{\dot{B}^{\f{3}{p}}_{p,1}}\bigr)\bigl(\|u\|_{\dot{B}^s_{p,r}}+\|\Psi\|_{\dot{B}^{s+\f12}_{p,r}}\|u\|_{\dot{B}^2_{p,r}}\bigr)\quad\text{and}
\\
&\|v\circ\Phi^{-1}\|_{\dot{B}^s_{p,r}}\leq
C\bigl(\|\na\Psi\|_{\dot{B}^{\f{3}{p}}_{p,1}}\bigr)\bigl(\|v\|_{\dot{B}^s_{p,r}}+\|\Psi\|_{\dot{B}^{s+\f12}_{p,r}}\|v\|_{\dot{B}^2_{p,r}}\bigr)\quad
\mbox{ for}\ \ s\in (2,3],\\
&\|u\circ\Phi\|_{\dot{B}^s_{p,r}}\leq
C\bigl(\|\na\Psi\|_{\dot{B}^{\f{3}{p}}_{p,1}}\bigr)\bigl(1+\|\D\Psi\|_{B^{s-2}_{p,r}}\bigr)\|\na
u\|_{B^{s-1}_{p,r}}\quad\text{and}
\\
&\|v\circ\Phi^{-1}\|_{\dot{B}^s_{p,r}}\leq
C\bigl(\|\na\Psi\|_{\dot{B}^{\f{3}{p}}_{p,1}}\bigr)\bigl(1+\|\D\Psi\|_{B^{s-2}_{p,r}}\bigr)\|\na
v\|_{B^{s-1}_{p,r}}\quad \mbox{ for}\ \ s>3,
\end{split} \eeq  where $C(\lam)$ denotes a positive constant non-decreasingly depending on $\lam$. }
\end{lem}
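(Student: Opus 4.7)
The plan is to exploit the volume-preserving property of $\Phi$, which gives the pointwise identity $\|f\circ\Phi\|_{L^p}=\|f\|_{L^p}$ for every $f\in L^p(\R^3)$. Since $\Phi^{-1}$ is again a volume-preserving diffeomorphism of the form $\Phi^{-1}(x)=x+\tilde\Psi(x)$ with $\tilde\Psi$ inheriting bounds comparable to those of $\Psi$ (via the implicit function theorem argument of Lemma~\ref{LL2}), it suffices to establish the estimates for $u\circ\Phi$; the corresponding estimates for $v\circ\Phi^{-1}$ follow by symmetry.

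For the low-regularity regime $s\in(-1,2]$, I would decompose $u=\sum_{j\in\Z}\Delta_j u$ and write $\Delta_k(u\circ\Phi)=\sum_j\Delta_k[(\Delta_j u)\circ\Phi]$, then split the sum at $j=k$. For the ``high-high'' contribution $j\geq k-N_0$, volume preservation gives the trivial bound $\|\Delta_k[(\Delta_j u)\circ\Phi]\|_{L^p}\leq\|(\Delta_j u)\circ\Phi\|_{L^p}=\|\Delta_j u\|_{L^p}$. For the ``low-high'' contribution $j<k-N_0$, I would apply Bernstein's inequality together with the chain rule: the $N$-th derivative $\nabla^N[(\Delta_j u)\circ\Phi]$ is a polynomial expression in $(\nabla^\ell\Psi)_{1\leq\ell\leq N}$ and $(\nabla^m\Delta_j u)\circ\Phi$ with $m\leq N$; using $\|\nabla\Psi\|_{L^\infty}\lesssim\|\nabla\Psi\|_{\dot B^{3/p}_{p,1}}$ and higher-order bounds of $\nabla^\ell\Psi$ reduced to $\nabla\Psi$ via the same embedding (for $\ell$ small) or absorbed below a large index, one gets the decay factor $2^{k(s-N)}2^{jN}$ which is summable in $j<k$ provided $N>s$. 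Summing with Young's inequality in $\ell^r$ yields the first estimate of \eqref{apb1}.

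For $s\in(2,3]$, second derivatives of $(\Delta_j u)\circ\Phi$ genuinely involve $\nabla^2\Psi$, which cannot be absorbed by an $L^\infty$ bound on $\Psi$. I would apply a Bony-type paraproduct decomposition to the chain-rule expression, which splits the new high-regularity interaction into a term controllable by $\|\nabla\Psi\|_{\dot B^{3/p}_{p,1}}\|u\|_{\dot B^s_{p,r}}$ together with the mixed contribution $\|\Psi\|_{\dot B^{s+1/2}_{p,r}}\|u\|_{\dot B^2_{p,r}}$; the loss of derivative on $u$ (capped at index $2$) is compensated by the extra regularity demanded on $\Psi$. For $s>3$, I would argue inductively using
\[
\nabla(u\circ\Phi)=(\nabla u\circ\Phi)(I+\nabla\Psi),
\]
reducing $\|u\circ\Phi\|_{\dot B^s_{p,r}}\sim\|\nabla(u\circ\Phi)\|_{\dot B^{s-1}_{p,r}}$ to the product $(\nabla u\circ\Phi)(I+\nabla\Psi)$ and then invoking the product laws of inhomogeneous Besov spaces (see \cite{bcd}) together with the induction hypothesis at level $s-1$. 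The polynomial factor $(1+\|\Delta\Psi\|_{B^{s-2}_{p,r}})$ records the successive insertions of $\nabla\Psi$ along this induction, and the use of \emph{inhomogeneous} norms for $\nabla u$ is needed to handle simultaneously the low- and high-frequency pieces coming from the product law.

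The main obstacle will be the paraproduct bookkeeping in the transitional range $s\in(2,3]$, where the correct form of the mixed term $\|\Psi\|_{\dot B^{s+1/2}_{p,r}}\|u\|_{\dot B^2_{p,r}}$ with the prefactor $C(\|\nabla\Psi\|_{\dot B^{3/p}_{p,1}})$ depending non-decreasingly on its argument must be extracted; this requires carefully distinguishing the three pieces $T$, $\bar T$, $R$ of Bony's decomposition of the chain-rule remainder and keeping track of which factor carries the $\dot B^2_{p,r}$ regularity. A secondary issue is verifying that $\tilde\Psi=\Phi^{-1}-\mathrm{Id}$ inherits the same Besov bounds as $\Psi$ up to a constant depending on $\|\nabla\Psi\|_{\dot B^{3/p}_{p,1}}$, which is done once and for all as in Lemma~\ref{LL2}, thereby allowing the estimates for $v\circ\Phi^{-1}$ to be read off from those of $u\circ\Phi$.
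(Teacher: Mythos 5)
You have located the right structural skeleton (chain rule for $s>2$, induction for $s>3$, symmetry between $\Phi$ and $\Phi^{-1}$), but the core of the first estimate, $s\in(-1,2]$, has genuine gaps as proposed. In the low--high block $j<k-N_0$ you want to gain $N>s$ derivatives by Bernstein plus Fa\`a di Bruno with a constant depending only on $\|\na\Psi\|_{\dot B^{3/p}_{p,1}}$. But as soon as $N\ge 2$ the chain rule produces terms such as $\bigl[(\na\Delta_j u)\circ\Phi\bigr]\,\na^2\Psi$ (and $\na^3\Psi$ once $s=2$ forces $N\ge 3$), and the hypothesis only gives $\na\Psi\in L^\infty$: the embedding $\dot B^{3/p}_{p,1}\hookrightarrow L^\infty$ says nothing about $\na^2\Psi$ in $L^\infty$, and the phrase ``reduced via the same embedding or absorbed below a large index'' is not an argument (a H\"older/embedding patch might rescue $N=2$, but $\na^3\Psi\in\dot B^{3/p-2}_{p,1}$ has negative index and embeds in no Lebesgue space of the required integrability, so the claimed factor $2^{(j-k)N}$ with constant $C\bigl(\|\na\Psi\|_{\dot B^{3/p}_{p,1}}\bigr)$ is not justified up to $s=2$). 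Symmetrically, in the high--high block $j\ge k-N_0$ the trivial bound $\|\Delta_k[(\Delta_j u)\circ\Phi]\|_{L^p}\le\|\Delta_j u\|_{L^p}$ is only summable for $s>0$, so the argument also fails on $s\in(-1,0]$; there one needs the gain $2^{k-j}\|\na\Psi\|_{L^\infty}$, i.e.\ the cancellation expressing that composing a high-frequency function with a diffeomorphism close to the identity creates little low-frequency content.

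The paper's route repairs exactly these two points: Lemma 2.7 of \cite{bcd} provides the two-sided almost-diagonal bound $\|\Delta_j\bigl((\Delta_k u)\circ\Phi\bigr)\|_{L^p}\le C\min\bigl(2^{j-k},2^{k-j}\bigr)\|\na\Psi\|_{L^\infty}\|\Delta_k u\|_{L^p}$, which gives the claim only for $s\in(-1,1)$, and the range $s\in[1,2]$ is then reached not by taking more derivatives in Bernstein but by the chain-rule identity \eqref{app5}, estimating $\|u\circ\Phi\|_{\dot B^s_{p,r}}$ through $\|\bigl((\na u)\circ\Phi\bigr)\cB\|_{\dot B^{s-1}_{p,r}}$ and combining the already-proved case $s-1\in[0,1)$ with product laws, since $\cB-I$ is polynomial in $\na\Psi\in\dot B^{3/p}_{p,1}$; your treatment of $s\in(2,3]$ (Bony splitting giving the mixed term $\|\Psi\|_{\dot B^{s+1/2}_{p,r}}\|u\|_{\dot B^{2}_{p,r}}$, as in \eqref{poit}) and the induction for $s>3$ do coincide with the paper's. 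One smaller point: reducing the $v\circ\Phi^{-1}$ estimates to those for $u\circ\Phi$ via $\tilde\Psi=\Phi^{-1}-\mathrm{Id}$ is mildly circular, because bounding $\na\tilde\Psi=\cB\circ\Phi^{-1}-I$ in the required high Besov norms is itself a composition estimate; the paper avoids this by running the same proof in parallel for $v\circ\Phi^{-1}$ using the second identity in \eqref{app5}.
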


\begin{proof} Let
\beno \cA=(a_{ij})_{i,j=1,2,3}\eqdefa I+\na_y\Psi,\quad
\cB=(b_{ij})_{i,j=1,2,3}\eqdefa (I+\na_y\Psi)^{-1}. \eeno Then due
to $\det\cA=1,$ the matrix $\cB$ equals the adjoint matrix of
$\cA$. This leads to \beq\label{app5}
(\p_{x_i}u)\circ\Phi=\sum_{j=1}^3b_{ji}\p_{y_j}(u\circ\Phi)\quad\text{and}
\quad(\p_{y_i}v)\circ\Phi^{-1}=\sum_{j=1}^3a_{ji}\circ\Phi^{-1}\p_{x_j}(v\circ\Phi^{-1}).
\eeq

In what follows, we shall only present  the  proof of the related
estimates involving $u\circ\Phi$, and the ones involving
$v\circ\Phi^{-1}$ are identical. We first deduce from Lemma 2.7 of
\cite{bcd} that \beno \|\D_j\bigl((\D_ku)\circ\Phi\bigr)\|_{L^p}\leq
C\min\bigl(2^{j-k},2^{k-j}\bigr)\|\na\Psi\|_{L^\infty}\|\D_ku\|_{L^p}\quad
\mbox{for all}\quad j,k\in \Z, \eeno so that for $s\in (-1,1)$ and
$u\in\dB^s_{p,r}(\R^3),$ one has \beno
\begin{split}
\|\D_j(u\circ\Phi)\|_{L^p}\leq
&\sum_{k\in\Z}\|\D_j\bigl((\D_ku)\circ\Phi\bigr)\|_{L^p}\\
\leq& C \Bigl(\sum_{k\leq
j}2^{k-j}+\sum_{k>j}2^{j-k}\Bigr)\|\na\Psi\|_{L^\infty}\|\D_k
u\|_{L^p}\\
\leq &C\|\na\Psi\|_{L^\infty}2^{-js} \Bigl(\sum_{k\leq
j}c_{k,r}2^{(k-j)(1-s)}+\sum_{k>j}c_{k,r}2^{(j-k)(1+s)}\Bigr)\|u\|_{\dB^s_{p,r}}\\
\leq
&Cc_{j,r}2^{-js}\|\na\Psi\|_{L^\infty}\|u\|_{\dB^s_{p,r}}\quad{for}\
\ (c_{j,r})_{j\in\Z}\in \ell^r(\Z).\end{split} \eeno This gives
\beq\label{qqqp} \|u\circ\Phi\|_{\dot{B}^s_{p,r}}\leq
C(\|\na\Psi\|_{L^\infty})\|u\|_{\dot{B}^s_{p,r}}\quad{for}\ \ s\in
(-1,1). \eeq Whereas we deduce from \eqref{app5} and \eqref{qqqp}
that \beno
\|u\circ\Phi\|_{\dot{B}^1_{p,r}}\leq
\|(\na_xu)\circ\Phi\cB\|_{\dB^0_{p,r}} \leq
C(\|\na\Psi\|_{L^\infty})\bigl(1+\|\na\Psi\|_{\dB^{\f3p}_{p,1}}\bigr)^2\|u\|_{\dot{B}^1_{p,r}}.
\eeno For $1<s\leq2$, we get, by using \eqref{app5} and product laws
in Besov spaces, that \beno
\begin{aligned}
\|u\circ\Phi\|_{\dot{B}^s_{p,r}}\leq &
\|(\na_xu)\circ\Phi\cB\|_{\dot{B}^{s-1}_{p,r}} \\
\leq & C
\bigl(1+\|\cB-I\|_{\dB^{\f3p}_{p,1}}\bigr)\|(\na_xu)\circ\Phi\|_{\dB^{s-1}_{p,r}}\leq
C(\|\na\Psi\|_{\dot{B}^{\f{3}{p}}_{p,1}})\|u\|_{\dot{B}^s_{p,r}}.
\end{aligned}\eeno
This proves the first  line of \eqref{apb1}.

On the other hand, it is easy to observe from Bony's decomposition
\eqref{C7} that \beno \|ab\|_{\dB^\tau_{p,r}}\lesssim
\|a\|_{L^\infty}\|b\|_{\dB^\tau_{p,r}}+\|a\|_{\dB^{\tau+\f12}_{p,r}}\|b\|_{\dB^1_{p,\infty}}\quad\mbox{for}\quad
\tau>0, \eeno from which, \eqref{app5}, and  the first line of
\eqref{apb1}, we infer that for $s\in (2,3]$ \beq\label{poit}
\begin{aligned}
&\|u\circ\Phi\|_{\dot{B}^s_{p,r}}\lesssim\|(\na_xu)\circ\Phi\cB\|_{\dot{B}^{s-1}_{p,r}}\\
&\lesssim(1+\|\na\Psi\|_{L^\infty})^2\|(\na
u)\circ\Phi\|_{\dot{B}^{s-1}_{p,r}}
+(1+\|\na\Psi\|_{L^\infty})\|\na\Psi\|_{\dot{B}^{s-\f12}_{p,r}}\|(\na u)\circ\Phi\|_{\dB^1_{p,r}}\\
&\leq
C\bigl(\|\na\Psi\|_{\dot{B}^{\f{3}{p}}_{p,1}}\bigr)\bigl(\|u\|_{\dot{B}^s_{p,r}}+\|\Psi\|_{\dot{B}^{s+\f12}_{p,r}}\|u\|_{\dot{B}^2_{p,r}}\bigr).
\end{aligned}\eeq
This proves the third line of \eqref{apb1}.

Inductively we assume that for $k\in\N$ and  $k+1<s-1\leq k+2$ ,
\beq  \label{app9a}
\begin{aligned} \|u\circ\Phi\|_{\dot{B}^{s-1}_{p,r}} \leq
C\bigl(\|\na\Psi\|_{\dot{B}^{\f{3}{p}}_{p,1}}\bigr)\bigl(\|u\|_{\dot{B}^{s-1}_{p,r}}
+\sum_{j=0}^{k-1}\|\Psi\|_{\dot{B}^{s-\f12-j}_{p,r}}\|u\|_{\dot{B}^{j+2}_{p,r}}\bigr).
\end{aligned}\eeq
Then by virtue of \eqref{app5} and \eqref{app9a}, we deduce that
\beno\begin{aligned}
&\|u\circ\Phi\|_{\dot{B}^s_{p,r}}\lesssim\|(\na_xu)\circ\Phi\cB\|_{\dot{B}^{s-1}_{p,r}}\\
&\leq C(\|\na\Psi\|_{\dot{B}^{\f{3}{p}}_{p,1}})\bigl(\|(\na
u)\circ\Phi\|_{\dot{B}^{s-1}_{p,r}}
+\|\na\Psi\|_{\dot{B}^{s-1}_{p,r}}\|(\na u)\circ\Phi\|_{L^\infty}\bigr)\\
&\leq
C(\|\na\Psi\|_{\dot{B}^{\f{3}{p}}_{p,1}})\Bigl(\|u\|_{\dot{B}^{s}_{p,r}}
+\sum_{j=0}^{k-1}\|\Psi\|_{\dot{B}^{s-\f12-j}_{p,r}}\|\na u\|_{\dot{B}^{j+2}_{p,r}}+\|\Psi\|_{\dot{B}^{s}_{p,r}}\|\na u\|_{L^\infty}\Bigr)\\
&\leq
C\bigl(\|\na\Psi\|_{\dot{B}^{\f{3}{p}}_{p,1}}\bigr)\bigl(1+\|\D\Psi\|_{B^{s-2}_{p,r}}\bigr)\|\na
u\|_{B^{s-1}_{p,r}},
\end{aligned}\eeno
which leads to the fifth inequality of \eqref{apb1}.
 This concludes the proof of Lemma \ref{funct}.
\end{proof}

\setcounter{equation}{0}
\section{The proof of Proposition \ref{LL1} }\label{appenda}

The proof of  Proposition \ref{LL1} will be based on the following
lemma:

\begin{lem}\label{lemappa}
{\sl Let $s>2+\f3p,$ $p\in (\f32,2),$  and $f\in B^s_{p,1}(\R^3)$
with $\Supp f(x_1,x_2,\cdot)\subset [-K,K]$ for some positive
constant $K.$ We assume moreover that $f$ and $\vv b_0$ are
admissible on $\R^2\times\{0\}$ in the sense of Definition
\ref{def1.1ad} and  \eqref{LL1pq} holds. Then \eqref{apa4}
 has a
solution $\psi\in B^s_{p,1}(\R^3)$ so that \beq \label{apa5}
\|\psi\|_{B^s_{p,1}}\leq C\bigl(K,\|\na \vv
b_0\|_{B^{s-1}_{p,1}}\bigr)\|f\|_{B^s_{p,1}}. \eeq}
\end{lem}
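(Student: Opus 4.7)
The plan is to straighten $\vv b_0$ by a Lagrangian change of variables, converting \eqref{apa4} into $\p_t\tilde\psi=\tilde f$, which is directly solvable by integration thanks to the admissibility hypothesis; the resulting estimate is then transferred back via composition bounds in the spirit of Lemma \ref{funct}. Since $s>2+\tfrac{3}{p}$, the embedding $B^s_{p,1}(\R^3)\hookrightarrow W^{2,\infty}(\R^3)$ guarantees that \eqref{apa6} generates a global $C^1$ flow $X(t,\cdot)$, volume preserving thanks to $\dv\,\vv b_0=0$. I would set $\Phi(y_h,t)\eqdef X(t,(y_h,0))$, $\tilde f\eqdef f\circ\Phi$, and then
\[\tilde\psi(y_h,t)\eqdef \int_{-\infty}^t\tilde f(y_h,\tau)\,d\tau,\qquad \psi\eqdef \tilde\psi\circ\Phi^{-1}.\]
The chain rule gives $\p_t\tilde\psi=\tilde f$, which is precisely \eqref{apa4} for $\psi$ after pulling back by $\Phi^{-1}$.

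The admissibility of $f$ on $\R^2\times\{0\}$ reads exactly $\int_\R\tilde f(y_h,\tau)\,d\tau=0$ for every $y_h\in\R^2$. Combined with $\Supp f(x_h,\cdot)\subset[-K,K]$ and the fact that the third component of $\Phi(y_h,t)$ equals $t+O(\e_0)$ (since $\p_t X^3=b_0^3(X)=1+O(\e_0)$), this forces both $\tilde f$ and $\tilde\psi$ to be supported in the slab $\R^2\times[-2K,2K]$ once $\e_0$ is small enough, so that $\psi$ is well-defined and tends to zero at infinity. The central estimate on the straightened side is
\[\|\tilde\psi\|_{B^s_{p,1}(\R^3)}\leq C(K)\,\|\tilde f\|_{B^s_{p,1}(\R^3)},\]
which I would prove from the identities $\p_t\tilde\psi=\tilde f$ and $\p_{y_i}^\al\tilde\psi(y_h,t)=\int_{-\infty}^t\p_{y_i}^\al\tilde f(y_h,\tau)\,d\tau$ for $i=1,2$, combined with Minkowski's inequality and the compact $t$-support; this gives the bound first for integer Sobolev norms, and then, by interpolation and summation of dyadic blocks, in $B^s_{p,1}$.

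The remaining step is transferring the estimate back: applied to $\Phi$ and to $\Phi^{-1}$, composition estimates of the type proved in Lemma \ref{funct} yield $\|\tilde f\|_{B^s_{p,1}}\lesssim\|f\|_{B^s_{p,1}}$ and $\|\psi\|_{B^s_{p,1}}\lesssim\|\tilde\psi\|_{B^s_{p,1}}$, with multiplicative constants of the form $C(\|\na(\Phi-\mathrm{Id})\|_{\dB^{3/p}_{p,1}\cap B^{s-1}_{p,1}})$. A Gronwall estimate on the ODE defining $\Phi$, restricted to the finite time window $|t|\leq 2K$, controls those norms by $C(K,\|\na\vv b_0\|_{B^{s-1}_{p,1}})$, and assembling all of this yields \eqref{apa5}. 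The main obstacle will be precisely this composition step: Lemma \ref{funct} is formulated for volume-preserving diffeomorphisms, whereas our $\Phi$ has Jacobian only close to $1$ (not identically equal to $1$, although $X(t,\cdot)$ itself is volume preserving); its proof nonetheless adapts, since $(D\Phi)^{-1}$ stays bounded in the relevant Besov scale as soon as $\det D\Phi$ does. Keeping the $\dB^{3/p}_{p,1}$-smallness of $\na(\Phi-\mathrm{Id})$ (needed to apply the composition estimate with bounded constant) while simultaneously tracking its high-regularity norm through the tame product rules is the most delicate piece of bookkeeping.
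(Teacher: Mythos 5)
Your construction of the solution is sound up to the last step, and it is close in spirit to the paper's (characteristics of $\vv b_0$, admissibility to kill the integral at infinity, the support condition plus $b_0^3\geq\f12$ to truncate to $|t|\leq 2K$). The genuine gap is in the transfer back through the flattening map $\Phi(y_h,t)=X(t,(y_h,0))$. You claim that $\|\na(\Phi-\mathrm{Id})\|_{\dB^{3/p}_{p,1}\cap B^{s-1}_{p,1}(\R^3)}$ is controlled by a Gronwall estimate "restricted to the window $|t|\leq 2K$"; but here $t$ is a coordinate of the domain of $\Phi$, not an external parameter, so restricting the window does not produce a bound on a norm over all of $\R^3$. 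In fact the bound is false: since $\na\vv b_0$ vanishes for $|x_3|>K$ and $X^3(t,(y_h,0))\geq t/2$, the matrix $\na\Phi(y_h,t)$ is independent of $t$ on $\{|t|\geq 2K\}$ and equals there the accumulated distortion $\na_{y_h}X(\pm 2K,(y_h,0))$ of the foot plane, which is generically $\neq I$; hence $\na\Phi-I$ does not decay as $|t|\to\infty$ and lies in no $B^{s-1}_{p,1}(\R^3)$ or $\dB^{3/p}_{p,1}(\R^3)$. Consequently Lemma \ref{funct}-type composition estimates cannot be invoked as stated for $\tilde f=f\circ\Phi$ and $\psi=\tilde\psi\circ\Phi^{-1}$ (the non-unit Jacobian $\det\na\Phi=b_0^3(y_h,0)$ is, as you say, harmless; the lack of global Besov control of $\na\Phi-I$ is not). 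To repair this you would have to either prove a composition estimate localized to the slab where $f$ and $\tilde\psi$ live, or extend $\Phi|_{\{|t|\leq 2K\}}$ to a global diffeomorphism whose gradient tends to the identity at infinity with controlled norms (undoing the horizontal shear above and below the slab) — a nontrivial construction that your sketch does not supply. A secondary, fixable point: the estimate $\|\tilde\psi\|_{B^s_{p,1}}\leq C(K)\|\tilde f\|_{B^s_{p,1}}$ cannot be obtained by interpolating the map $\tilde f\mapsto\tilde\psi$ as you state it, because that map is only defined (and only bounded) on the subspace of admissible, slab-supported data; one should instead interpolate a globally defined surrogate such as $g\mapsto\chi(t)\int_{-\infty}^t(\tilde\chi g)(\cdot,\tau)\,d\tau$ with cutoffs adapted to $[-2K,2K]$, which coincides with your map on the relevant data.

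For comparison, the paper avoids the flattening map altogether: it sets $\psi(x)=-\int_0^{2K}f(X(t,x))\,dt$ for $x_3\geq0$ and $\psi(x)=\int_{-2K}^0 f(X(t,x))\,dt$ for $x_3\leq0$, uses admissibility only to match the two formulas across $x_3=0$ (and the equation itself to recover $\p_{x_3}$-derivatives there), and then only ever composes $f$ with the time-$t$ flow maps $X(t,\cdot)$, which are volume preserving and whose gradients minus the identity are supported in slabs and bounded in $B^{s-1}_{p,1}$ by Gronwall on $|t|\leq 2K$ (estimates \eqref{apa9}--\eqref{apa12}); thus Lemma \ref{funct} applies verbatim and the global-norm obstruction you face never arises. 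If you want to keep your straightened-coordinates formulation, you must add the localization or extension step described above; as written, the final composition step fails.
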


\begin{proof}  Due to \eqref{LL1pq}, \eqref{apa6} has a unique global solution on $\R$
so that for all $t\in\R,$ \beq \label{apa7} \|\na
X(t,\cdot)\|_{L^\infty}\leq \exp\Bigl(\|\na\vv
b_0\|_{L^\infty}|t|\Bigr)\ \ \ \and\ \ \ \det\Bigl(\frac{\p
X(t,x)}{\p x}\Bigr)=1. \eeq While it follows from \eqref{apa4} and
\eqref{apa6} that \beno \frac{d}{dt}\psi(X(t,x))=f(X(t,x)),\eeno
from which, we define \beno \psi(x)= \left\{\begin{array}{l}
\displaystyle -\int_0^\infty f(X(t,x))\,dt\quad\mbox{if}\quad x_3\geq 0, \\
\displaystyle \ \ \int_{-\infty}^0 f(X(t,x))\,dt\quad\mbox{if}\quad
x_3\leq 0.
\end{array}\right. \eeno
Thanks to the assumption that $f$ and $\vv b_0$ are admissible on
$\R^2\times\{0\}$ in the sense of Definition \ref{def1.1ad}, the
values of $\psi(x)$ at $(x_1,x_2,0)$ are compatible. We remark that
 $ b_0^3\p_3\psi=-b_0^1\p_1\psi-b_0^2\p_2\psi+f$ and
$b_0^3\geq\f12$ implies that the derivatives of $\psi$ in the
$x_1,x_2$ variables yields  the derivatives of $\psi$ with respect
to $x_3$ variable. Therefore, we do not require any admissible
condition for the derivatives of $f$ and $\vv b_0.$

On the other hand, it follows from \eqref{LL1pq} that $b^3_0\geq
\f12$ as long as $\e_0$ is small enough. So that we deduce from
\eqref{apa6} that \beno && X^3(t,x)\geq
x_3+\f{t}2\geq K \quad\ \ \mbox{if}\quad t\geq 2K,\,\ \  x_3\geq 0,\quad \andf\\
 && X^3(t,x)\leq
x_3+\f{t}2\leq -K \quad \mbox{if}\quad t\leq -2K,\, x_3\leq 0,\eeno
which together with the assumption: $\Supp f(x_1,x_2,\cdot)\subset
[-K,K]$ for some positive constant $K,$ implies that \beq
\label{apa8} \psi(x)= \left\{\begin{array}{l}
\displaystyle -\int_0^{2K} f(X(t,x))\,dt\quad\mbox{if}\quad x_3\geq 0, \\
\displaystyle \ \ \int_{-2K}^0 f(X(t,x))\,dt\quad\mbox{if}\quad
x_3\leq 0.
\end{array}\right. \eeq

With this solution formula for \eqref{apa4}, it amounts to prove
\eqref{apa5} in order to complete the proof of Lemma \ref{lemappa}.
Indeed for any $s>0,$ we deduce from \eqref{apa6} and product laws
in Besov spaces that for any $t\in [-2K,2K]$ \beq \label{apa9}
\begin{split} \|\na_xX(t,\cdot)-I\|_{\dB^s_{p,1}}\lesssim &
\int^{|t|}_0\Bigl(\|\na\vv b_0\|_{L^\infty}\|\na_xX(t',\cdot)-I\|_{\dB^s_{p,1}}\\
&+\|(\na\vv
b_0)(X(t',\cdot))\|_{\dB^s_{p,1}}\bigl(1+\|\na_xX(t',\cdot)-I\|_{L^\infty}\bigr)\Bigr)\,dt',
\end{split}
\eeq from which, \eqref{qqqp} and \eqref{apa7}, we get, by using
Gronwall's inequality, that \beno \max_{t\in
[-2K,2K]}\|\na_xX(t,\cdot)-I\|_{\dB^s_{p,1}}\leq C\bigl(K,\|\na \vv
b_0\|_{L^\infty}\bigr)\|\na\vv
b_0\|_{\dB^s_{p,1}}\quad\mbox{for}\quad s\in (0,1). \eeno Then for
$s\in (1,2)$ and $t\in [-2K,2K],$ we infer \beno
\begin{split}
\|f(X(t,\cdot))\|_{\dB^s_{p,1}}=&\|\na
f(X(t,\cdot))\na_xX(t,\cdot)\|_{\dB^{s-1}_{p,1}}\\
\lesssim &\|\na
f\|_{L^\infty}\|\na_xX(t,\cdot)-I\|_{\dB^{s-1}_{p,1}}\\
&+\|\na
f(X(t,\cdot))\|_{\dB^{s-1}_{p,1}}\bigl(1+\|\na_xX(t,\cdot)-I\|_{L^\infty}\bigr)\\
\leq& C\bigl(K,\|\na \vv b_0\|_{L^\infty}\bigr)\bigl(\|\na
f\|_{L^\infty}\|\vv b_0\|_{\dB^s_{p,1}}+\|f\|_{\dB^s_{p,1}}\bigr).
\end{split}
\eeno Notice that for $p\in (\f32,2),$ $\f3p\in (\f32,2),$ we thus
deduce from \eqref{apa9} that \beno
\begin{split}
\|\na_x X(t,\cdot)-I\|_{\dB^{\f3p}_{p,1}}\lesssim &
\int_0^{|t|}\|\na\vv b_0\|_{L^\infty}\|\na_xX(t',\cdot)-I\|_{\dB^{\f3p}_{p,1}}\,dt'\\
&+C\bigl(K,\|\na\vv b_0\|_{L^\infty}\bigr)\bigl(\|\na^2\vv
b_0\|_{L^\infty}\|\vv b_0\|_{\dB^{\f{3}{p}}_{p,1}}+\|\na\vv
b_0\|_{\dB^{\f3p}_{p,1}}\bigr),
\end{split}
\eeno for $t\in [-2K,2K].$ Applying Gronwall's inequality gives rise
to \beq\label{apa10} \max_{t\in [-2K,2K]}\|\na
X(t,\cdot)-I\|_{\dB^{\f3p}_{p,1}}\leq C\bigl(K,\|\na \vv
b_0\|_{B^{1+\f3p}_{p,1}}\bigr)\|\na\vv b_0\|_{B^{\f3p}_{p,1}}. \eeq
While it is easy to observe from \eqref{poit} that \beno
\|u\circ\Phi\|_{\dB^s_{p,1}}\leq
C\bigl(\|\na\Psi\|_{\dB^{\f3p}_{p,1}}\bigr)\bigl(\|u\|_{\dB^s_{p,1}}+\|\na\Psi\|_{\dB^s_{p,1}}\|
u\|_{\dB^{\f3p}_{p,1}}\bigr) \eeno for $s\in (2,3],$  so that for
$2<s-1\leq 3,$ \eqref{apa9} implies \beno
\begin{split}
\|\na X(t,\cdot)&-I\|_{\dB^{s-1}_{p,1}}\lesssim
\int_0^{|t|}\Bigl(\|\na\vv b_0\|_{L^\infty}\|\na
X(t',\cdot)-I\|_{\dB^{s-1}_{p,1}}\\
&+C\bigl(K,\|\na\vv b_0\|_{B^{1+\f3p}_{p,1}}\bigr)\bigl(\|\na\vv
b_0\|_{\dB^{s-1}_{p,1}}+\|\na
X(t',\cdot)-I\|_{\dB^{s-1}_{p,1}}\|\na\vv
b_0\|_{\dB^{\f3p}_{p,1}}\Bigr)\,dt',
\end{split}
\eeno for $t\in [-2K,2K],$ applying Gronwall's inequality gives
\beq\label{apa11} \max_{t\in [-2K,2K]}\|\na
X(t,\cdot)-I\|_{\dB^{s-1}_{p,1}}\leq C\bigl(K,\|\na \vv
b_0\|_{B^{1+\f3p}_{p,1}}\bigr)\|\na\vv
b_0\|_{\dB^{s-1}_{p,1}}\quad\mbox{for}\quad s\in (3,4]. \eeq For
$s>4,$ we deduce from \eqref{apb1} and \eqref{apa9} that \beno
\begin{split}
\|\na X(t,\cdot)&-I\|_{\dB^{s-1}_{p,1}}\lesssim
\int_0^{|t|}\Bigl(\|\na\vv b_0\|_{L^\infty}\|\na
X(t',\cdot)-I\|_{\dB^{s-1}_{p,1}}\\
&+C\bigl(K,\|\na\vv b_0\|_{B^{1+\f3p}_{p,1}}\bigr)\bigl(1+\|\na
X(t',\cdot)-I\|_{B^{s-2}_{p,1}}\bigr)\|\na^2\vv
b_0\|_{B^{s-2}_{p,1}}\Bigr)\,dt'.
\end{split}
\eeno Whereas similar to \eqref{apa9}, one has \beno \|\na
X(t,\cdot)-I\|_{L^p}\lesssim \int_0^{|t|}\Bigl(\|\na\vv
b_0\|_{L^\infty}\|\na X(t',\cdot)-I\|_{L^p}+\|\na\vv
b_0\|_{L^p}\Bigr)\,dt'. \eeno As a consequence, we obtain \beno
\begin{split}
\|\na X(t,\cdot)&-I\|_{B^{s-1}_{p,1}}\leq C\bigl(K,\|\na\vv
b_0\|_{B^{s-1}_{p,1}}\bigr) \int_0^{|t|}\Bigl(\|\na
X(t',\cdot)-I\|_{B^{s-1}_{p,1}}+\|\na\vv
b_0\|_{B^{s-1}_{p,1}}\Bigr)\,dt'.
\end{split}
\eeno Applying Gronwall's inequality leads to \beq\label{apa12}
\max_{t\in [-2K,2K]}\|\na X(t,\cdot)-I\|_{B^{s-1}_{p,1}}\leq
C\bigl(K,\|\na \vv b_0\|_{B^{s-1}_{p,1}}\bigr)\|\na\vv
b_0\|_{B^{s-1}_{p,1}}\quad\mbox{for}\quad s>4. \eeq

Finally we deduce from \eqref{apb1} and \eqref{apa8} that \beno
\begin{split}
\|\psi\|_{\dB^s_{p,1}}\leq \int_{-2K}^{2K}C\bigl(\|\na
X(t',\cdot)-I\|_{\dB^{\f3p}_{p,1}}\bigr)\Bigl(1+\|\na
X(t',\cdot)-I\|_{B^{s-1}_{p,1}}\Bigr)\|\na f\|_{B^{s-1}_{p,1}}\,dt',
\end{split}
\eeno which together with \beno \|\psi\|_{L^p}\leq \|f\|_{L^p} \eeno and
\eqref{apa11} and \eqref{apa12} concludes the proof of \eqref{apa5}.
\end{proof}

\begin{proof}[Proof of Proposition \ref{LL1}] Under the assumption of \eqref{LL1pq}, we would first like to find a
solution $(\psi_1,\psi_2)\in B^s_{p,1}(\R^3)$ to the  following
system:
\begin{equation}\label{apa1}
 \left\{\begin{array}{l}
\displaystyle (1-\p_{x_2}\psi_2)\p_{x_3}\psi_1+\p_{x_3}\psi_2\p_{x_2}\psi_1=b_0^1,\quad\mbox{for}\quad x\in\R^3, \\
\displaystyle (1-\p_{x_1}\psi_1)\p_{x_3}\psi_2+\p_{x_3}\psi_1\p_{x_1}\psi_2=b_0^2 \\
\end{array}\right.
\end{equation}
If we use the standard iteration scheme to solve the above problem,
the iterated solutions will lose derivative on each step. However,
notice that \begin{multline*}
\p_{x_1}\bigl(\p_{x_2}\psi_1\p_{x_3}\psi_2+\p_{x_3}\psi_1(1-\p_{x_2}\psi_2)\bigr)+\p_{x_2}\bigl(
\p_{x_3}\psi_1\p_{x_1}\psi_2+\p_{x_3}\psi_2(1-\p_{x_1}\psi_1)\bigr)\\+\p_{x_3}\bigl(
(1-\p_{x_1}\psi_1)(1-\p_{x_2}\psi_2)-\p_{x_2}\psi_1\p_{x_1}\psi_2\bigr)=0.
\end{multline*}
This along with $\dive \vv b_0=0$ ensures that
 \beno \p_{x_3}\bigl(b_0^3-
(1-\p_{x_1}\psi_1)(1-\p_{x_2}\psi_2)+\p_{x_2}\psi_1\p_{x_1}\psi_2\bigr)=0.\eeno
Since $b_0^3-1\in B_{p,1}^s(\R^3),$ we conclude that \beq
\label{apa2} b_0^3=
(1-\p_{x_1}\psi_1)(1-\p_{x_2}\psi_2)-\p_{x_2}\psi_1\p_{x_1}\psi_2.
\eeq With \eqref{apa2}, we solve \eqref{apa1} for $\p_{x_3}\psi_1$ and
$\p_{x_3}\psi_2$ from \eqref{apa1} \beno
\begin{pmatrix} \p_{x_3}\psi_1 \\
\p_{x_3}\psi_2
\end{pmatrix}=\f1{b_0^3}\begin{pmatrix} 1-\p_{x_1}\psi_1& -\p_{x_2}\psi_1 \\
-\p_{x_1}\psi_2&1-\p_{x_2}\psi_2
\end{pmatrix}\begin{pmatrix} b_0^1 \\
b_0^2
\end{pmatrix}, \eeno
or equivalently \beq \label{apa3}
\begin{split}
b_0^1\p_{x_1}\psi_1+b_0^2\p_{x_2}\psi_1+b_0^3\p_{x_3}\psi_1=b_0^1,\\
b_0^1\p_{x_1}\psi_2+b_0^2\p_{x_2}\psi_2+b_0^3\p_{x_3}\psi_2=b_0^2.
\end{split}
\eeq Thanks to \eqref{LL1pq} and Lemma \ref{lemappa}, \eqref{apa3}
has a solution $(\psi_1,\psi_2)$ so that \beq \label{apa13}
\bigl\|\bigl(\psi_1,\psi_2\bigr)\bigr\|_{B^s_{p,1}}\leq
C(K,\e_0)\bigl\|\bigl(b_0^1,b_0^2\bigr)\bigr\|_{B^s_{p,1}}. \eeq
Whereas for $\vv \Psi=\bigl(\psi_1,\psi_2,\psi_3\bigr)^T,$ we deduce
from $\mbox{det}\bigl(I-\na\vv \Psi\bigr)=1$ that
\begin{multline*}
\bigl(1-\p_1\psi_1-\p_2\psi_2+\p_1\psi_1\p_2\psi_2-\p_2\psi_1\p_1\psi_2\bigr)\p_3\psi_3+
\bigl(\p_2\psi_1\p_3\psi_2+\p_3\psi_1(1-\p_2\psi_2)\bigr)\p_1\psi_3\\
+\bigl(\p_3\psi_1\p_1\psi_2+(1-\p_1\psi_1)\p_3\psi_2\bigr)\p_2\psi_3=-\p_1\psi_1-\p_2\psi_2+\p_1\psi_1\p_2\psi_2-\p_2\psi_1\p_1\psi_2,
\end{multline*}
which together with \eqref{apa1} and \eqref{apa2} yields
\beq\label{apa14}
b_0^1\p_{x_1}\psi_3+b_0^2\p_{x_2}\psi_3+b_0^3\p_{x_3}\psi_3=b_0^3-1.\eeq
Along the same line to the proof of \eqref{apa13}, \eqref{apa14} has
a solution $\psi_3$ so that
 \beno
\|\psi_3\|_{B^s_{p,1}}\leq C(K,\e_0)\|b_0^3-1\|_{B^s_{p,1}}. \eeno
This together with \eqref{apa13} leads to \eqref{B3p0}. And thus
\eqref{B3p3} follows from \eqref{B3p2}.

 Finally observing that $U_0$
defined in Proposition \ref{LL1} is in fact the adjoint matrix of
$I-\na_x\vv \Psi,$ $U_0$ automatically satisfies \eqref{B3}. This
finishes the proof of Proposition \ref{LL1}.
\end{proof}

\noindent {\bf Acknowledgments.} We would like to thank Professor
Fanghua Lin for profitable discussions on this topic. After this
work has been finished, Professor Lin told us that he and Ting Zhang
 proved a similar wellposedness result for a modified MHD system. We
 also would like to thank Dr. Zhen Lei for pointing out a mistake in
 the proof of Lemma \ref{lemappa} in the  preliminary version of
 this paper.

Part of this work was done when the second author was visiting the
laboratory  of mathematics of Universit\'e Paris-Sud  in April 2013.
 He would like to thank the hospitality of the
laboratory. Both authors are supported by innovation grant from
National Center for Mathematics and Interdisciplinary Sciences. L.
Xu is partially supported by NSF of China under Grant 11201455.
 P. Zhang is partially supported by NSF
of China under Grant 10421101 and 10931007.
\medskip

\end{document}